\newenvironment{NB}{
\color{red}{\bf NB}. \footnotesize
}{}
\newenvironment{NB2}{
\color{red}{\bf NB}. \footnotesize
}{}
\newtheorem{thm}{Theorem}[section]
\newtheorem{defn}[thm]{Definition}
\newtheorem{ex}[thm]{Example}
\newtheorem{prop}[thm]{Proposition}
\newtheorem{cor}[thm]{Corollary}
\newtheorem{lem}[thm]{Lemma}
\newtheorem{rem}[thm]{Remark}
\newcommand{\Hom}{\mathrm{Hom}}
\newcommand{\mr}[1]{{\mathrm{#1}}}
\newcommand{\mca}[1]{{\mathcal{#1}}}
\title{Wall-crossing of the motivic Donaldson-Thomas invariants}
\author{Kentaro Nagao}
\begin{document}

\maketitle

\begin{NB2}
This is a preliminary version !
\end{NB2}%

\begin{abstract}
We study motivic Donaldson-Thomas invariants in the sense of \cite{behrend_bryan_szendroi}. 
A wall-crossing formula under a mutation is proved for a certain class of quivers with potentials.
The class includes the quivers for the canonical bundles of del-Pezzo surfaces with geometric helices. 
The formula is the same as \cite{ks, COHA} and is described by quantum dilogarithms.
\end{abstract}

\begin{NB}
\begin{center}
{\LARGE On motivic Donaldson-Thomas invariants}
\end{center}

\smallskip

\begin{flushright}
{\large Kentaro Nagao}

\smallskip

Graduate School of Mathematics

Nagoya University
\end{flushright}

\bigskip
\end{NB}

\begin{NB}
This is a preliminary version !
\end{NB}%

\section*{Introduction}
In this article we study the {\it motivic Donaldson-Thomas} (DT in short) {\it invariants} introduced in \cite{ks,behrend_bryan_szendroi}.

The DT invariant for a Calabi-Yau $3$-fold $Y$ is a counting invariant of coherent sheaves on $Y$, 
which it is introduced in \cite{thomas-dt} as a holomorphic analogue of the Casson invariant on a real $3$-manifold.
The moduli space involves a symmetric obstruction theory and a virtual fundamental cycle \cite{behrend-fantechi-intrinsic,behrend-fantechi}. 
The invariant is defined as a integration of the constant function $1$ over the virtual fundamental cycle.

The DT invariant has the other description : it coincides with the weighted Euler characteristic weighted by the Behrend function.
It is known that the moduli space of coherent sheaves on $Y$ can be locally described as the critical locus of a function which is called a {\it holomorphic Chern-Simons functional} (see \cite{joyce-song}).
The value of the Behrend function is given by the Euler characteristic of the {\it Milnor fiber} of the Chern-Simons functional \cite{behrend-dt}.

Following these results of Behrend, it is proposed in \cite{ks,behrend_bryan_szendroi} to study {\it motivic Milnor fiber} as a motivic version of the DT invariant so that we can get a refinement of the ordinary DT invariant by applying a suitable cohomology functor for the motivic one . 
Such a refinement has been expected in string theory \cite{RTV,dimofte-gukov,dimofte_gukov_soibelman}.



In \cite{ks}, Kontsevich and Soibelman provided a wall-crossing formula for motivic DT invariants up to a certain identity for motivic Milnor fibers (\cite[Conjecture 4]{ks}). 
The aim of this article is to give an alternative proof of the wall-crossing formula for (\cite{behrend_bryan_szendroi}'s) motivic DT invariants in a spacial setting.

\section*{Main result}
Let $(Q,W)$ be a quiver with a potential (QP in short).
In this paper, we assume that $W$ is finite, i.e. a finite linear combination of oriented cycles.
 
Let $\mathcal{D}_{Q,W}$ be the derived category of dg modules with finite dimensional cohomologies over the (non-complete) Ginzburg's dg algebra and
$\mathrm{mod}J(Q,W)$ be the category of finite dimensional modules over the (non-complete) Jacobi algebra which is the core of the natural bounded t-structure of $\mathcal{D}_{Q,W}$.
The moduli stack of objects is canonically described as the critical locus of a function $f_W$ on a smooth stack $\mathfrak{M}_Q$. We call the function as the {\it Chern-Simons functional}.
We define the motivic DT invariant by the virtual motive $[\mathrm{crit}(f_W)]_{\mathrm{vir}}$ (Definition \ref{defn_vir}) of the critical locus of the Chern-Simons functional \footnote{The smooth stack $\mca{M}_Q$ is described as a quotient stack divided by a special algebraic group $G$. Actually we do not work on critical loci for stacks but only for varieties and define motivic invariants by the quotient of the virtual motives by $G$.}.

For a vertex $k$ without loops, let $\mu_k(Q,W)=(Q',W')$ be the mutation in the sense of \cite{quiver-with-potentials}.
We assume that $Q'$ is the quiver mutation in the sense of Fomin-Zelevinsky and $W'$ is finite \footnote{In \cite{quiver-with-potentials}, it is shown that if the potential $W$ is generic then $Q'$ is the Fomin-Zelevinsky mutation of $Q$. Finiteness of $W$ is stronger assumption.}.
Keller-Yang showed that $\mathcal{D}_{Q,W}$ and $\mathcal{D}_{Q',W'}$ are equivalent \cite{dong-keller,keller-completion}. 
We want to describe the relation between the motivic DT invariant for $(Q,W)$ and the one for $(Q',W')$.

The main theorem in this paper is the following :
\begin{thm}\label{thm_main}
Assume that 
\begin{itemize}
\item $(Q,W)$ has a cut (Definition \ref{defn_cut}), 
\item $k$ is a strict source of $C$ (Definition \ref{defn_strict}).
\end{itemize}
Then we have
\[
\mca{A}_{Q',W'}
\ "="\ 
\mathbb{E}(s_k[1])\times \mca{A}_{Q,W}\times \mathbb{E}(s_k)^{-1}
\]
where $\mca{A}_{Q,W}$ is the generating series of the motivic Donaldson-Thomas invariants and $\mathbb{E}(s_k)$ is the ``motivic dilogarithm'' (Example \ref{ex_dilog}). This is an equation in the ``{\it motivic torus}'' (Definition \ref{defn_mt}). \footnote{This is an equation of infinite power series. Since we have to make it clear in which completion we work, I use the equal sign with quotation mark $"="$.} 

By taking the weight polynomial we get the following :
\[
{A}_{Q',W'}
\ "="\ 
\mathbb{E}_q(s_k[1])\times {A}_{Q,W}\times \mathbb{E}_q(s_k)^{-1}
\]
where ${A}_{Q,W}$ is the generating series of the ``refined Donaldson-Thomas invariants'' (Definition \ref{defn_rdt}) and $\mathbb{E}_q(s_k)$ is the quantum dilogarithm. 
This is an equation in the ``{\it quantum torus}'' (Definition \ref{defn_qt}).
\end{thm}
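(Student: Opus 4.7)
The plan is to exploit the cut $C$ and the strict source hypothesis to realize both sides of the formula as images, under a motivic integration map, of elements in a motivic Hall algebra associated with the Jacobi category, and then to carry out the wall-crossing combinatorially at the vertex $k$. First, using the cut I would present $\mf{M}_Q$ as a product of $\Hom$-spaces indexed by the arrows outside $C$ together with a regular function $f_W$ whose critical locus recovers the representation variety of $J(Q,W)$; the virtual motive entering $\mca{A}_{Q,W}$ then becomes a concrete motivic Milnor fibre, and the integration map from the Grothendieck ring of stacks over $\mr{mod}\,J(Q,W)$ to the motivic torus is well-defined on the $\GL_{\dimv}$-equivariant quotients. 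The strict source assumption on $k$ guarantees that all cut arrows incident to $k$ are oriented in a single direction, so $\mu_k(Q,W)$ still admits a canonical cut $C'$ with the same good properties, making it possible to compare the two sides in a common framework.

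Next I would set up a Hall-algebra-type identity by stratifying the moduli of objects of $\mr{mod}\,J(Q,W)$ according to the maximal $s_k$-subobject, i.e.\ by filtrations with factors that are either $s_k$ or lie in the complementary subcategory generated by the other simples. The Keller--Yang equivalence $\mca{D}_{Q,W}\simeq\mca{D}_{Q',W'}$ sends $s_k$ to $s_k[1]$ and identifies the tilted heart with $\mr{mod}\,J(Q',W')$, so the analogous stratification from the other side accounts for the factor $\mathbb{E}(s_k[1])$ on the left. Summing the contributions of each stratum yields, formally, the product $\mathbb{E}(s_k[1])\times\mca{A}_{Q,W}\times\mathbb{E}(s_k)^{-1}$; the exponential series arise because the fibers of the stratification, namely the classifying stacks of extensions by powers of $s_k$, have virtual motives whose generating function is precisely the motivic dilogarithm of Example \ref{ex_dilog}.

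The principal technical obstacle will be establishing the multiplicativity of the virtual motive along these stratification fibres. Concretely, one needs to show that $\mr{crit}(f_W)$ restricted to the locus of representations with a prescribed $s_k$-subobject decomposes, up to a motivically trivial affine fibration, as a product of critical loci for the sub and the quotient; this is the motivic analogue of the Behrend-function multiplicativity used by Joyce--Song, and it is where the cut hypothesis does essential work, because on the ambient smooth stack the decomposition reduces to elementary linear algebra and $f_W$ restricts compatibly to the sub and quotient pieces (with the cross-terms contributing only a trivial $\mathbb{L}$-power). Once this motivic factorisation is in hand, the identity in the motivic torus follows by summing over dimension vectors, and passing to the weight polynomial yields the quantum torus statement since by construction the weight polynomial of $\mathbb{E}$ is the quantum dilogarithm $\mathbb{E}_q$.
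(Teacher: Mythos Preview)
Your overall architecture---factorise $\mca{A}_{Q,W}$ and $\mca{A}_{Q',W'}$ according to the torsion pair determined by $s_k$, then identify the ``middle'' pieces via the Keller--Yang tilting---is exactly the shape of the paper's argument. The factorisation step is handled in the paper much as you describe (Proposition~\ref{prop_31}, Proposition~\ref{prop_factorization}, Theorem~\ref{thm_FP}): the cut supplies a $\mathbb{C}^*$-action so that Behrend--Bryan--Szendr\H{o}i's Theorem~\ref{thm_BBS} computes $[\varphi_{f_W}]$ as $[f_W^{-1}(1)]-[f_W^{-1}(0)]$, and then the stratum-by-stratum multiplicativity you want is an elementary consequence of this together with motivic Thom--Sebastiani. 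So far so good.

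The genuine gap is in the comparison of the middle pieces. You write that Keller--Yang ``identifies the tilted heart with $\mr{mod}\,J(Q',W')$'' and proceed as if this settles the matter, but it does not: the virtual motive $[\mr{crit}(f)]_{\mr{vir}}$ depends on the ambient presentation as a critical locus, not just on the scheme or the abelian category (see the Remark after Definition~\ref{defn_vir}). The same moduli stack of objects in ${}^\bot\mathcal{S}=(\mathcal{S}')^\bot$ is realised as a critical locus in two different ways, once via $f_W$ and once via $f_{W'}$, and you must prove these give the same virtual motive. Nothing in the derived equivalence does this for you. The paper's solution is to pass through the \emph{truncated} Jacobi algebras: Theorem~\ref{thm_cut_id} shows that $[\varphi_{f_{W,\mathbf{v}}}]=\mathbb{L}^d\cdot[\mathrm{M}(Q,W,C;\mathbf{v})]$, i.e.\ the virtual motive is an ordinary motive of a variety of $J(Q,W)_C$-modules; the strict-source hypothesis then produces a cut $C'$ of $(Q',W')$ and an explicit equivalence $\mr{mod}(J(Q,W)_C)_k\simeq\mr{mod}(J(Q',W')_{C'})^k$ (Proposition~\ref{prop_tilting}), yielding equality of ordinary motives (Proposition~\ref{prop_44}) and hence of the virtual ones (Theorem~\ref{thm_45}). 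You mention that the strict-source condition gives a cut on the mutated QP, but you never use it; this is precisely where it is needed, and without this reduction-to-truncated-Jacobian step your argument has no mechanism for matching the two Chern--Simons functionals.

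A secondary remark: your first paragraph describes presenting $\mf{M}_Q$ as ``a product of $\Hom$-spaces indexed by the arrows outside $C$ together with a regular function $f_W$ whose critical locus recovers the representation variety of $J(Q,W)$''. This is not quite right: $f_W$ lives on the full $\mr{M}(Q;\mathbf{v})$, and what the cut buys you is that $f_W$ is \emph{linear} along the $C$-directions, so that integrating them out replaces the virtual motive by the honest motive of the zero locus of $\{\partial_a W:a\in C\}$ inside $\mr{M}(Q_C;\mathbf{v})$. That linear-in-$C$ structure is exactly the content of Theorem~\ref{thm_cut_id}, and it is the missing ingredient in your outline.
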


\section*{Sketch of the proof}

\noindent
{\it \underline{First step}}

\smallskip 

The first step is to show the {\it factorization property}. 
Take a stability condition, then each object $\mathrm{mod}J(Q,W)$ has the unique Harder-Narashimhan filtration. 
Types of the Harder-Narashimhan filtrations induce a filtration by open sets on the moduli stack (see \S \ref{subsec_ks52}).
In particular, each stratum is smooth. 
Using this filtration we want a formula which describes the generating function of the motivic DT invariants as the product of the generating functions of the motivic invariants of the moduli stacks of semi-stable objects. 
To get the formula, we need the following ; 
\begin{quote}
Let ${X}$ be a smooth stack, $f$ be a function on ${X}$ and ${Y}\subset {X}$ be a smooth substack of codimension $d$. Then, 
\begin{equation}\label{eq_motivic}
\bigl[\mathrm{crit}(f)\bigr]_{\mathrm{vir}}\overset{?}{=}
\bigl[\mathrm{crit}(f|_{{X}\backslash {Y}})\bigr]_{\mathrm{vir}}+
\mathbb{L}^{-\frac{d}{2}}\cdot \bigl[\mathrm{crit}(f|_{Y})\bigr]_{\mathrm{vir}}.
\end{equation}
\end{quote}

In \S \ref{sec_FP}, we assume that we have a cut $C$ of the QP $(Q,W)$, that is, a nonnegative grading
\[
g_C\colon Q_1\to \mathbb{Z}_{\geq 0}
\]
such that $W$ is homogeneous of degree $1$.
Then the moduli stack involves a $\mathbb{C}^*$-action so that we can apply \cite[Theorem B.1]{behrend_bryan_szendroi} (Theorem \ref{thm_BBS}).
The equation \eqref{eq_motivic} directly follows \cite[Theorem B.1]{behrend_bryan_szendroi} (see Proposition \ref{prop_31}). 
\begin{rem}
In \cite{COHA}, Kontsevich-Soibelman introduce the {\it cohomological Hall algebra} (COHA in short) which provide another realization of a refinement of the DT invariant. 
The factorization property for the COHA is shown in \cite[\S 5]{COHA}.
For the COHA, the Thom isomorphism is the counterpart of the equation \eqref{eq_motivic}.
\end{rem}

Applying the factorization property in our setting, 
we can describe the generating function of the motivic DT invariants as the product of the generating functions of the motivic invariants of
the moduli stacks of objects in 
\[
\mathcal{S}:=\{s_k^{\oplus n}\mid n\geq 0\}
\]
and
\[
{}^\bot \mathcal{S}:=\{X\in \mathrm{mod}J(Q,W)\mid \Hom (X,s_k)=0\}
\] 
where $s_k$ is the simple $J(Q,W)$-module corresponding to the vertex $k$.
The generating function for $\mathcal{S}$ is given by the {\it quantum dilogarithm}.

\smallskip 

\smallskip 
\noindent
{\it \underline{Second step}}

\smallskip

In the same way,  
we can describe the generating function for $(Q',W')$ as the product of the generating functions of the motivic invariants of
the moduli stacks of objects in 
\[
 (\mathcal{S}')^\bot:=\{X\in \mathrm{mod}J(Q',W')\mid \Hom (s'_k,X)=0\}.
\]
and
\[
\mathcal{S'}:=\{(s'_k)^{\oplus n}\mid n\geq 0\}
\] 
where $s'_k$ is the simple $J(Q',W')$-module.
It is shown in \cite{dong-keller} that the derived equivalence is given by {\it tilting} with respect to the simple module $s_k$, that is, in the derived category we have
\[
\mathcal{S}'=\mathcal{S}[1],\quad 
 (\mathcal{S}')^\bot={}^\bot \mathcal{S} \quad (\text{see Figure \ref{fig:tilting}}).
\]
\begin{figure}[h]
  \centering
  \input{tilting.tpc}
  \caption{$\mathrm{mod}J(Q,W)$ and $\mathrm{mod}J(Q',W')$}
  \label{fig:tilting}
\end{figure}
Now, we get two Chern-Simons functionals which realize the moduli stack of objects in $(\mathcal{S}')^\bot={}^\bot \mathcal{S}$ as the critical loci ; 
one is the restriction of $f_W$ and the other is the restriction of $f_{\mu_k W}$. 
A priori, the virtual motive depends not only on the scheme structure of the critical locus but also on the choice of the Chern-Simons functional.
So we need to show the following : 
\begin{align}
&\text{The virtual motives of the moduli stack of objects in $(\mathcal{S}')^\bot={}^\bot \mathcal{S}$ defined}\notag \\
&\text{by $f_{W}$ and $f_{W'}$ coincide.}\label{coincide}
\end{align}
Combine \eqref{coincide} with the arguments above, we can describe the relation between the motivic DT invariants for $(Q,W)$ and for $(Q',W')$ in terms of the quantum dilogarithm. 

We prove \eqref{coincide} under the assumption in Theorem \ref{thm_main} (Proposition \ref{prop_44}). 
The proof consists of the following two steps :

\smallskip

\noindent (A) 
By taking the torus fixed part of the Jacobi algebra $J(Q,W)$ we can define the {\it truncated Jacobi algebra} $J(Q,W)_C$ (\S \ref{subsec_21}) and we have the following identity (Theorem \ref{prop_44}) : 
\begin{align}
&\text{The virtual motive of moduli stack of $J(Q,W)$-modules }\notag \\
&=\text{ the motive of the moduli stack of $J(Q,W)_C$-modules.}\label{reduction}
\end{align}
This is a generalization of \cite[Equation (2.4)]{behrend_bryan_szendroi} and \cite[Theorem 9.5]{hua}.

\smallskip 

\noindent (B) If $k$ is a {\it strict source} (Definition \ref{defn_strict}), we can take a cut $C'$ of $(Q',W')$ and show an identity between the moduli stack of $J(Q,W)_C$-modules and the one of $J(Q',W')_{C'}$-modules (Proposition \ref{prop_44}).
This makes us possible to compare the virtual motive of the moduli stack of $J(Q,W)$-modules and the one of $J(Q',W')$-modules.

\section*{Comments}
Let us itemize some applications, related topics and further directions.
Some of them will appear in the forthcoming paper.
\begin{itemize}
\item[(a)]
Applying Theorem \ref{thm_FP} for a product of two simply laced Dynkin quivers, we can show a quantized version of dilogarithm identity in conformal field theory \cite{nakanishi_dilog}.
More general identities has been already shown by B. Keller \cite{keller-q-dilog}.
\item[(b)]
In \cite{cluster-via-DT}, the author studied cluster algebras by using the ideas in Donaldson-Thomas theory.
It is expected that we can study quantum cluster algebras \cite{quantum_cluster} using motivic Donaldson-Thomas theory.
\item[(c)] 
In this paper, the result of Behrend-Bryan-Szendroi \cite[Theorem B.1]{behrend_bryan_szendroi} plays a crucial role and existence of desirable torus action is indispensable.
We want to show the same results for any generic QP in the future.
Once we get \eqref{eq_motivic} and \eqref{coincide}, then we can prove the same results immediately.
\item[(d)]
During preparing this paper, the author was informed by Sergey Mozgovoy of his related work. 
In \cite{mozgovoy} he shows a similar result to Theorem \ref{thm_FP} over finite fields.
He uses the result of M. Reineke \cite{reineke_homomorphism}.
\item[(e)]
During preparing this paper, the author was informed also by Balazs Szendroi of his related work. 
In \cite{szendroi_morrison}, Szendroi and A. Morrison provide a motivic version of the result of \cite{nagao-nakajima}.
As a result they realize the refined topological vertex of the generating function of motivic invariants, which has already discussed in physics (\cite{dimofte-gukov, dimofte_gukov_soibelman}). 
We can apply Theorem \ref{thm_FP} to study the wall-crossing phenomenon. \footnote{The author was informed by Andrew Morrison that he has a generalization for $\mathbb{C}\times \mathbb{C}^2/(\mathbb{Z}/n\mathbb{Z})$. }
\end{itemize}

\section*{Acknowledgement}
{\ }

\vspace{-5mm}
 
This paper is strongly influenced by ideas in \cite{ks, COHA}.

This paper has also benefited from lectures by Zheng Hua in January 2011 at Nagoya.
He explained to the author his usage of \cite[Theorem B.1]{behrend_bryan_szendroi} in his preprint \cite{hua}.
The author greatly appreciates him.




The author thanks Osamu Iyama and Perre-Guy Plamondon for the useful discussions and comments.
The author also thanks Z. Hua, S. Mozgovoy and B. Szendroi for pointing out some mistakes in the preliminary version.

The author is supported by the Grant-in-Aid for Research Activity Start-up (No. 22840023) and for Scientific Research (S) (No. 22224001).

\section{Motivic Donaldson-Thomas invariants}
\subsection{Motivic ring}
Let $K_0(\mathrm{Var}/\mathbb{C})$ denote 
the free abelian group on isomorphism classes of complex varieties, 
modulo relations
\[
[X] = [Z] + [U]
\]
for $Z\subset X$ a closed subvariety with complementary open subvariety $U$.
We can equip $K_0(\mathrm{Var}/\mathbb{C})$ with the structure of a commutative
ring by setting
\[
[X]\cdot[Y]=[X\times Y].
\]
We write
\[
\mathbb{L} = [\mathbb{A}^1]\in K_0(\mathrm{Var}/\mathbb{C})
\]
for the class of the affine line.
We define the motivic ring 
\[
\mathcal{M}_\mathbb{C}:=K_0(\mathrm{Var}/\mathbb{C})[\mathbb{L}^{-1/2}]
\]
and its localization 
\[
\widetilde{\mathcal{M}}_\mathbb{C}:=
\mathcal{M}_\mathbb{C}[(1-\mathbb{L}^n)^{-1}:n\geq 1].
\]
The following lemma is a consequence of \cite[Lemma 3.8]{bridgeland-hall}:
\begin{lem}\label{lem_comporison}
Let $X$ (resp. $Y$) be a variety with an action of a special algebraic group $G$ (resp. $H$).
Assume we have an isomorphism of stacks between $[X/G]$ and $[Y/H]$, then we have
\[
\frac{[X]}{[G]}=\frac{[Y]}{[H]}\in \widetilde{\mathcal{M}}_\mathbb{C}.
\]
\end{lem}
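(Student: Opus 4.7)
The plan is to follow the standard torsor argument that underlies Bridgeland--Hall's Lemma~3.8. The idea is to use the isomorphism of stacks to build a single variety $Z$ which simultaneously parametrizes both $G$-torsors over $Y$ and $H$-torsors over $X$, and then to pass the torsor relations into the Grothendieck ring.

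First, I would form the fiber product
\[
Z \;:=\; X \times_{[X/G]} Y,
\]
where the map $Y \to [X/G]$ is the composition of the quotient map $Y \to [Y/H]$ with the given isomorphism of stacks $[Y/H] \xrightarrow{\sim} [X/G]$. Since the canonical morphism $X \to [X/G]$ is a $G$-torsor and the canonical morphism $Y \to [Y/H]$ is an $H$-torsor, pulling back these torsors across the fiber product square shows that the two projections
\[
Z \longrightarrow Y, \qquad Z \longrightarrow X
\]
are, respectively, a $G$-torsor and an $H$-torsor. In particular $Z$ is representable by an honest variety (not just a stack), since torsors over varieties for affine algebraic groups are representable.

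Next, I use the hypothesis that $G$ and $H$ are special algebraic groups. By definition, every \'etale $G$-torsor (resp. $H$-torsor) over a variety is Zariski-locally trivial, so a stratification argument in $K_0(\mathrm{Var}/\mathbb{C})$ yields the multiplicativity relations
\[
[Z] \;=\; [G]\cdot[Y] \;=\; [H]\cdot[X].
\]
Classes of special groups (which in practice are built from $\mathrm{GL}_n$'s) are polynomials in $\mathbb{L}$ of the form $\prod_i (\mathbb{L}^{a_i}-1)\cdot\mathbb{L}^{b}$, hence become invertible once we pass to $\widetilde{\mathcal{M}}_\mathbb{C}$. Dividing the identity $[G][Y] = [H][X]$ by $[G][H]$ in $\widetilde{\mathcal{M}}_\mathbb{C}$ delivers the desired equality $[X]/[G] = [Y]/[H]$.

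The only genuinely delicate point is checking that the fiber product $Z$ really is a variety and that the projections are torsors in the geometric sense rather than just stack-theoretically; this is where the specialness of $G$ and $H$ is needed. Once that representability and the Zariski-local triviality are granted, the rest is a short bookkeeping computation in the Grothendieck ring, so I do not expect any further obstacle.
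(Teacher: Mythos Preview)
Your proposal is correct and in fact supplies more detail than the paper, which simply records the lemma as ``a consequence of \cite[Lemma 3.8]{bridgeland-hall}'' without further argument; the fiber-product/torsor argument you sketch is exactly what underlies that reference. One small correction of emphasis: representability of $Z = X \times_{[X/G]} Y$ does not require specialness of $G$ or $H$ --- a torsor for an affine group scheme over a scheme is always representable by fppf descent --- so specialness enters only at the Zariski-local-triviality step that yields $[Z]=[G][Y]=[H][X]$ in $K_0(\mathrm{Var}/\mathbb{C})$.
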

Let $\hat{\mu}:=\lim_{\leftarrow}\mathbb{Z}/n\mathbb{Z}$ be the group of roots of unity. 
We define the Grothendieck group $K_0^{\hat{\mu}}(\mathrm{Var}/\mathbb{C})$ 
of varieties with good $\hat{\mu}$-actions as in \cite[\S 1.4]{behrend_bryan_szendroi}.
We define $\widetilde{\mathcal{M}}^{\hat{\mu}}_\mathbb{C}$ in the same way.

The additive group  $\widetilde{\mathcal{M}}^{\hat{\mu}}_\mathbb{C}$
can be endowed with an associative multiplication $\star$
using convolution involving the classes of Fermat curves \cite{denef-loeser-igusa,looijenga}. 
This product agrees with the ordinary product on 
the subalgebra $\widetilde{\mathcal{M}}_\mathbb{C} \subset \widetilde{\mathcal{M}}^{\hat{\mu}}_\mathbb{C}$ of classes with trivial $\hat{\mu}$-actions, but not in general.

\subsection{Homomorphisms from the motivic ring}
Deligne's mixed Hodge structure on compactly supported cohomology of a variety $X$ gives rise to the $E$-polynomial homomorphism
\[
E
\colon 
K_0(\mr{Var}_\mathbb{C})
\to 
\mathbb{Z}[x, y]
\]
defined on generators by
\[
E([X];x,y)=
\sum_{p,q}
x^py^q
\sum_i
(-1)^i \dim H_{p,q}(H^i_c(X,\mathbb{Q})).
\]
This extends to a ring homomorphism
\[
E\colon 
\widetilde{\mathcal{M}}_\mathbb{C}
\to
\mathbb{Q}(x^{1/2},y^{1/2})
\]
By the specialization
\[
x = y = (xy)^{1/2}= q^{1/2},
\]
we get 
\[
\mathbb{W}\colon 
\widetilde{\mathcal{M}}_\mathbb{C}
\to
\mathbb{Q}(q^{1/2})
\]
\begin{ex}\label{ex_dilog}
We put
\[
\mca{T}:=\prod_{n\geq 0}\widetilde{\mca{M}}_{\mathbb{C}}\cdot e_n
\]
where $e_n$ is a formal variable satisfying $e_n\cdot e_m=e_{n+m}$. 
We put 
\[
\sum_{n\geq 0}\frac{[\mr{pt}]}{[\mr{GL}_n]\cdot \mathbb{L}^{-\frac{\dim \mr{GL}_n}{2}}}\cdot{e}_n\in \mca{T}.
\]
We call this \textup{motivic dilogarithm}.
We extend the homomorphism $\mathbb{W}$ to 
\[
\mca{T}\to T:=
\prod_{n\geq 0}
\mathbb{Q}(q^{1/2})
\cdot
e_n.
\]
Then the image of the motivic dilogarithm under $\mathbb{W}$ is the \textup{quantum dilogarithm (\cite{quantum-dilogarithm}) :}
\[
\sum_{n\geq 0}\frac{q^{n^2/2}}{(q^n-1)\cdots(q^n-q^{n-1})}{e}_n\in {T}.
\]
\end{ex}

\subsection{Motivic nearby and vanishing cycles}
Let $f\colon X \to \mathbb{C}$ be a regular function on a smooth variety X and let $X_0 := f^{-1}(0)$ be the central fiber.
Using arc spaces, Denef and Loeser \cite{denef-loeser,looijenga} define
the motivic nearby cycle $[\varphi_f]\in \mathcal{M}_\mathbb{C}^{\hat{\mu}}$ of $f$ and 
the motivic vanishing cycle 
\[
[\varphi_f]:=[\psi_f]-[X_0]\in \mathcal{M}_{\mathbb{C}}^{\hat{\mu}}
\]
of $f$.
Note that if $f=0$, then $[\psi_0]=-[X]$.
\begin{NB}
The following lemma, which is a special case of the motivic Thom-Sebastiani theorem (Theorem \ref{thm_thos_sebastiani}), directly follows the definition : 
\begin{lem}
Let $X$ and $Y$ be smooth varieties and $f$ be a regular function on $X$. 
Then we have
\[
[\psi_{\pi_X^*f}]=[\psi_{f}\times Y]
\]
where $\pi_X\colon X\times Y \to X$ is the natural projection.
\end{lem}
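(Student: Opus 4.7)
The plan is to unfold the Denef--Loeser definition of the motivic nearby cycle via arc spaces and observe that everything factors compatibly with the product $X\times Y$. Recall that for a smooth variety $X$ of dimension $d$ and regular $f\colon X\to\mathbb{C}$, the motivic zeta function is
\[
Z_f(T)=\sum_{n\geq 1}[\mathcal{X}_{n,f}]\,\mathbb{L}^{-nd}\,T^n,\qquad \mathcal{X}_{n,f}=\{\gamma\in\mathcal{L}_n(X):\mathrm{ord}_t\,f(\gamma)=n\},
\]
equipped with its natural $\hat\mu$-action (rotation of the arc parameter by $n$-th roots of unity), and $[\psi_f]$ is extracted from $Z_f(T)$ by taking the naive limit $T\to\infty$ in $\mathcal{M}_{\mathbb{C}}^{\hat\mu}$.

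The key observation is that an $n$-jet on $X\times Y$ is literally a pair $(\gamma_X,\gamma_Y)\in\mathcal{L}_n(X)\times\mathcal{L}_n(Y)$, and $(\pi_X^*f)(\gamma_X,\gamma_Y)=f(\gamma_X)$ depends only on the first component. Hence the order-of-vanishing condition splits off the $Y$-factor entirely, giving an isomorphism of varieties
\[
\mathcal{X}_{n,\pi_X^*f}\;\cong\;\mathcal{X}_{n,f}\times\mathcal{L}_n(Y),
\]
and this isomorphism is $\hat\mu$-equivariant because the $\hat\mu$-action only sees the pulled-back function, hence is trivial on the $\mathcal{L}_n(Y)$ factor. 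For $Y$ smooth of dimension $e$, the truncation map $\mathcal{L}_n(Y)\to Y$ is a Zariski-locally trivial $\mathbb{A}^{ne}$-bundle, so $[\mathcal{L}_n(Y)]=[Y]\cdot\mathbb{L}^{ne}$ in $\mathcal{M}_{\mathbb{C}}$.

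Plugging this into $Z_{\pi_X^*f}(T)$, the factors of $\mathbb{L}^{ne}$ combine with the normalization $\mathbb{L}^{-n(d+e)}$ appropriate for $X\times Y$ to yield exactly the normalization $\mathbb{L}^{-nd}$ for $X$, so
\[
Z_{\pi_X^*f}(T)\;=\;Z_f(T)\cdot[Y]
\]
in $\mathcal{M}_{\mathbb{C}}^{\hat\mu}[\![T]\!]$, where $[Y]$ carries the trivial $\hat\mu$-action. Passing to the limit gives $[\psi_{\pi_X^*f}]=[\psi_f]\cdot[Y]$, which is what the displayed equation abbreviates as $[\psi_f\times Y]$. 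The main technical point, and really the only thing to check carefully, is the $\hat\mu$-equivariance of the product decomposition at finite level; once that is in place the computation is formal. Alternatively one can read this off directly from the motivic Thom--Sebastiani formula applied to the pair $(f,0_Y)$ on $X\times Y$, using $[\psi_{0_Y}]=-[Y]$ combined with the sign conventions relating $\psi$ and $\varphi$.
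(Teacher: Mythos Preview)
Your argument is correct and matches the paper's approach: the paper simply asserts that the lemma ``directly follows the definition'' (and notes it is a special case of the motivic Thom--Sebastiani theorem), and your arc-space computation is precisely the unfolding of that definition. You have supplied the details the paper omits, and you also mention the Thom--Sebastiani alternative, so there is nothing further to compare.
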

\end{NB}
\begin{NB}
\begin{lem}\label{lem_13}
Let $G$ be an algebraic group, $H$ be a subgroup of $G$, $X$ be a smooth variety with an action $H$ and $f$ be an $H$-invariant regular function on $X$.
We put $X_G:=X\times_H G$ and $f_G$ be the pull back of $f$ on $X_G$. 
Then we have
\[
\frac{[\varphi_{f_G}]}{[G]}=\frac{[\varphi_f]}{[H]}\in 
\widetilde{\mathcal{M}}_\mathbb{C}.
\]
\end{lem}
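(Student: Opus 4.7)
The plan is to relate both sides to a common intermediate object, namely the pulled-back function on $X\times G$, by passing through the principal $H$-bundle $X\times G \to X_G$. Let $H$ act on $X\times G$ by $h\cdot(x,g)=(h\cdot x, gh^{-1})$; this action is free and has quotient $X_G=X\times_H G$. The first projection $\pi\colon X\times G\to X$ is not $H$-equivariant for a trivial action on $X$, but because $f$ is $H$-invariant on $X$, the pulled back function $\pi^*f$ is $H$-invariant on $X\times G$ and descends along $X\times G\to X_G$ to the function $f_G$.

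First I would apply the trivial (one-factor-zero) case of the motivic Thom--Sebastiani theorem, exactly in the form already flagged in the preceding NB remark, to the product $X\times G$ with the function $\pi^*f$. Since $\mathrm{crit}(\pi^*f)=\mathrm{crit}(f)\times G$ and $G$ is smooth, this yields
\[
[\varphi_{\pi^*f}] \;=\; [\varphi_f]\cdot[G] \qquad \text{in } \widetilde{\mathcal{M}}^{\hat\mu}_\mathbb{C}.
\]
Second, I would exploit that $X\times G\to X_G$ is a principal $H$-bundle. Assuming $H$ is special (which is the setting in which Lemma~\ref{lem_comporison} is applied elsewhere in the paper), this bundle is Zariski-locally trivial, so locally on $X_G$ the pair $(X\times G,\pi^*f)$ is isomorphic to $(X_G\times H, f_G\boxplus 0)$. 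Applying the same trivial Thom--Sebastiani fact on each local chart and gluing via the scissor relations gives
\[
[\varphi_{\pi^*f}] \;=\; [H]\cdot [\varphi_{f_G}].
\]
Equating the two expressions and dividing by $[G]\cdot[H]$ in $\widetilde{\mathcal{M}}_\mathbb{C}$ (which is legitimate because $[G]=[G/H]\cdot[H]$ is invertible once we have localised at the $1-\mathbb{L}^n$) produces the desired identity $[\varphi_{f_G}]/[G]=[\varphi_f]/[H]$.

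The main obstacle is the gluing step behind the second displayed equation: one must check that the motivic vanishing cycle descends correctly along a Zariski-locally trivial principal bundle. This is essentially a compatibility of Denef--Loeser's construction with étale (here Zariski) base change plus the product formula, but it has to be done with care to track the $\hat\mu$-equivariant structure, since $[\varphi_{f_G}]$ and $[\varphi_{\pi^*f}]$ a priori live in $\widetilde{\mathcal{M}}^{\hat\mu}_\mathbb{C}$ rather than in $\widetilde{\mathcal{M}}_\mathbb{C}$. The $\hat\mu$-action comes only from the arc-space construction applied to $f$ itself and is unaffected by the $H$- or $G$-factors, which is what ultimately lets the identity be formulated in $\widetilde{\mathcal{M}}_\mathbb{C}$ (or, more conservatively, in the $\hat\mu$-equivariant ring with the same proof). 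If one wishes to drop the assumption that $H$ is special, one has to replace the Zariski-local triviality by an étale-local argument and invoke Lemma~\ref{lem_comporison}-type relations; otherwise the proof proceeds exactly as sketched.
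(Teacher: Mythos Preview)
Your approach is sound but takes a genuinely different route from the paper's. The paper argues directly via the Denef--Loeser description of the motivic nearby cycle in terms of an embedded resolution: one chooses an $H$-equivariant embedded resolution $Y\to X$ of $f^{-1}(0)$, observes that $Y_G:=Y\times_H G$ is then an embedded resolution of $f_G^{-1}(0)$, and reads off the identity by comparing the explicit stratum-by-stratum formulas for $[\varphi_f]$ and $[\varphi_{f_G}]$ (each stratum on the $G$-side is the associated bundle of the corresponding stratum on the $X$-side, contributing a factor $[G]/[H]$). The paper then declines to spell out this bookkeeping, remarking that in the situations actually used the claim follows immediately from Theorem~\ref{thm_BBS}.

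Your argument instead interposes the product $X\times G$ and invokes the one-factor-zero case of motivic Thom--Sebastiani twice: once for the global product $X\times G\to X$, and once Zariski-locally for the principal $H$-bundle $X\times G\to X_G$. This is a legitimate alternative and arguably more structural; the cost is precisely the gluing step you flag, which amounts to the fact that the motivic vanishing cycle is defined in (and additive for) the relative Grothendieck ring over the central fibre before pushing forward to a point. That locality is part of the Denef--Loeser formalism, so your sketch goes through once $H$ is special. The paper's resolution argument sidesteps this local-to-global issue by computing both sides globally from the same resolution data, at the price of invoking the explicit resolution formula; your route trades that for the Thom--Sebastiani black box plus a locality statement. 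Both leave roughly the same amount of detail to the reader.
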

\begin{proof}
We use the description of the motivic nearby cycle in terms of any embedded resolution (\cite{denef-loeser,looijenga}).
We can take an $H$-equivariant embedded resolution $Y\to X$ of $f^{-1}(0)$. 
Then $Y_G:=Y\times_H G$ gives a $G$-equivariant embedded resolution of $f_G^{-1}(0)$.
The claim directly follows the descriptions of $[\varphi_{f}]$ and $[\varphi_{f_G}]$ in terms of $Y$ and $Y_G$. 
We omit the details, since in the setting of this paper we can prove the claim using Theorem \ref{thm_BBS}.  
\end{proof}
\end{NB}
\begin{thm}[\protect{{\bf Motivic Thom-Sebastiani Theorem} \cite{denef-loeser_ts,looijenga}}]\label{thm_thom_sebastiani}
Let $f$, $g$ be regular functions on smooth varieties $X$, $Y$. Then we have
\[
[-\varphi_{f\oplus g}]
=
[-\varphi_{f}]
\star
[-\varphi_{g}]
\]
\end{thm}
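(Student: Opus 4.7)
The plan is to follow the Denef--Loeser strategy and compute both sides from the arc-space definition. Recall that for a regular function $f\colon X\to\mathbb{C}$ on a smooth $d$-dimensional variety one has, in $\mca{M}^{\hat\mu}_{\mathbb{C}}$,
\[
[\psi_f] \;=\; \lim_{n\to\infty}\mathbb{L}^{-nd}\,[X_n],\qquad X_n=\{\gamma\in\mca{L}_n(X):\mathrm{ord}_t f(\gamma)=n,\ \mathrm{ac}(f(\gamma))=1\},
\]
where $X_n$ carries the $\mu_n$-action by $t\mapsto \zeta t$ and these actions assemble into the $\hat\mu$-equivariant structure.

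First I would rewrite the left-hand side on $X\times Y$ in these terms. An arc $\gamma=(\gamma_1,\gamma_2)$ contributing to $[\psi_{f\oplus g}]$ at level $n$ is stratified by the pair $(a,b):=(\mathrm{ord}_t f(\gamma_1),\mathrm{ord}_t g(\gamma_2))$. The constraint $\mathrm{ord}_t(f(\gamma_1)+g(\gamma_2))=n$ with angular component $1$ falls into three strata: $a<b$ forces $a=n$ and $\mathrm{ac}(f(\gamma_1))=1$ with $\gamma_2$ essentially free above order $n$; the symmetric case $a>b$; and the diagonal $a=b=n$ where $\mathrm{ac}(f(\gamma_1))+\mathrm{ac}(g(\gamma_2))=1$. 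The off-diagonal strata, after taking the limit and using that higher-order truncations of arcs on $Y$ form motivic affine bundles over $Y$, produce boundary terms involving $[X]$, $[Y]$ and the individual nearby cycles.

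The main obstacle is the diagonal stratum, which carries the essential content of the theorem. Its level-$n$ contribution fibres over $X_n\times Y_n$ with fibre the $\mu_n$-equivariant affine Fermat locus $\{\alpha+\beta=1\}\subset\mathbb{G}_m\times\mathbb{G}_m$ equipped with its diagonal rotation action. By construction, Looijenga's convolution product $\star$ on $\widetilde{\mca{M}}^{\hat\mu}_\mathbb{C}$ is designed to record exactly this Fermat data; the work is to verify that the identification is compatible with the arc-truncation maps $X_{n+1}\to X_n$, $Y_{n+1}\to Y_n$ and survives the $n\to\infty$ limit. This compatibility check is built into Looijenga's construction of $\star$ and forms the technical heart of the argument.

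Finally, summing the contributions of the three strata gives a formula for $[\psi_{f\oplus g}]$ in terms of $[\psi_f]$, $[\psi_g]$ and $[\psi_f]\star[\psi_g]$. Combining this with the inclusion--exclusion formula $[(f\oplus g)^{-1}(0)]=[f^{-1}(0)][Y]+[X][g^{-1}(0)]-[f^{-1}(0)][g^{-1}(0)]$, and invoking the fact that $\star$ restricts to ordinary product on the $\hat\mu$-trivial subalgebra, a careful sign chase using $[\varphi_h]=[\psi_h]-[h^{-1}(0)]$ rearranges all boundary contributions into the clean identity $[-\varphi_{f\oplus g}]=[-\varphi_f]\star[-\varphi_g]$. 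The overall minus sign in $[-\varphi_\cdot]$ is precisely the normalisation under which the Fermat-curve convolution on the vanishing cycle is multiplicative.
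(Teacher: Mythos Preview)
The paper does not give its own proof of this theorem: it is stated with a citation to Denef--Loeser and Looijenga and then used as a black box (notably in the proof of Proposition~\ref{prop_factorization}). So there is no in-paper argument to compare against; your sketch is an attempt to reconstruct the proof from the cited references rather than to match anything the author wrote.

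As a sketch of the Denef--Loeser/Looijenga argument your outline is broadly on the right track --- stratify arcs on $X\times Y$ by $(\mathrm{ord}_t f,\mathrm{ord}_t g)$, isolate the diagonal stratum carrying the Fermat-type data that defines $\star$, and bookkeep the boundary pieces into the vanishing-cycle normalisation. But several points are too soft to count as a proof. First, the ``limit'' formulation $[\psi_f]=\lim_n \mathbb{L}^{-nd}[X_n]$ is not how the motivic nearby fibre is defined: Denef--Loeser's $\psi_f$ is a rational expression in the contact loci (equivalently computed on a log resolution), and the stabilisation you invoke needs justification rather than being assumed. Second, on the diagonal stratum the angular-component constraint is not simply $\mathrm{ac}(f)+\mathrm{ac}(g)=1$ in $\mathbb{G}_m\times\mathbb{G}_m$; cancellation ($\mathrm{ac}(f)+\mathrm{ac}(g)=0$ with the order jumping) must be handled, and it is precisely this that makes the convolution formula delicate. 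Third, the sentence ``this compatibility check is built into Looijenga's construction'' is where the actual work lives; asserting it is not proving it. If you want a self-contained argument, the cleanest route is via an embedded resolution of $f$ and $g$ separately and the explicit description of $\star$ on the resulting strata, as in Looijenga's paper, rather than a direct arc-space limit.
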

\begin{NB}
\begin{cor}\label{lem_12}
Let $X$ and $Y$ be smooth varieties and $f$ be a regular function on $X$. 
Then we have
\[
[\psi_{\pi_X^*f}]=[\psi_{f}]\star [Y]
\]
where $\pi_X\colon X\times Y \to X$ is the natural projection.
\end{cor}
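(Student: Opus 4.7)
My plan is to read the equality off the Denef--Loeser presentation of the motivic nearby cycle via an embedded resolution chosen compatibly with the product decomposition $X\times Y$. I would avoid invoking Theorem \ref{thm_thom_sebastiani} directly with $g=0$, since the formula $[\psi_0]=-[Y]$ recorded just before that theorem does not slot cleanly into the convolution identity for $[\varphi]$; a direct geometric argument is more transparent.

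First, I would choose an embedded resolution $\sigma\colon \widetilde{X}\to X$ of $f^{-1}(0)\subset X$, writing $\sigma^{-1}(f^{-1}(0))=\sum_i N_iE_i$ as a divisor with simple normal crossings. Smoothness of $Y$ then lets me form $\sigma\times \mathrm{id}_Y\colon \widetilde{X}\times Y\to X\times Y$, which is an embedded resolution of $(\pi_X^*f)^{-1}(0)=f^{-1}(0)\times Y$, with divisor $\sum_i N_i(E_i\times Y)$ carrying exactly the same multiplicities as for $f$.

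Next, I would feed both resolutions into the Denef--Loeser formula
\[
[\psi_f]=\sum_{\emptyset\neq I}(1-\mathbb{L})^{|I|-1}[\widetilde{E}_I^\circ],
\]
where $E_I^\circ:=\bigcap_{i\in I}E_i\setminus\bigcup_{j\notin I}E_j$ and $\widetilde{E}_I^\circ$ is the natural $\hat{\mu}$-equivariant cyclic \'etale cover. The analogous computation for $\pi_X^*f$ simply replaces each stratum by $E_I^\circ\times Y$ and each cyclic cover by $\widetilde{E}_I^\circ\times Y$, with $\hat{\mu}$ acting trivially on the $Y$-factor because $\pi_X^*f$ is constant along $Y$. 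Summing term-by-term gives $[\psi_{\pi_X^*f}]=[\psi_f]\cdot [Y]$ in $\mathcal{M}_\mathbb{C}^{\hat{\mu}}$. Finally, triviality of the $\hat{\mu}$-action on $[Y]$ together with the remark from \S1.1 that $\star$ agrees with the ordinary product on classes with trivial $\hat{\mu}$-action upgrades this to $[\psi_{\pi_X^*f}]=[\psi_f]\star [Y]$, as required.

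The only step requiring real care is the identification of the cyclic cover on the product with $\widetilde{E}_I^\circ\times Y$ equipped with the trivial $\hat{\mu}$-action on $Y$. This reduces to a local observation: in coordinates where $f\circ\sigma=u\cdot\prod_i x_i^{N_i}$, the pulled-back function $(\pi_X^*f)\circ(\sigma\times\mathrm{id}_Y)$ has literally the same expression in the $X$-coordinates, so the roots of unity parametrising the \'etale cover are determined by $X$-data alone. Once this is verified, the rest is a direct transcription of the Denef--Loeser recipe.
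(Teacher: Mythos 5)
Your argument is correct, but it follows a genuinely different route from the paper's. The paper offers no independent proof of this statement: it is presented as an immediate corollary of the motivic Thom--Sebastiani theorem (Theorem \ref{thm_thom_sebastiani}) applied with $g=0$ on $Y$, for which one needs $[\varphi_0]=-[Y]$, equivalently $[\psi_0]=0$; the line ``if $f=0$ then $[\psi_0]=-[X]$'' in the text is evidently a slip for $[\varphi_0]=-[X]$. With that corrected convention the intended derivation does go through: $[-\varphi_{\pi_X^*f}]=[-\varphi_{f\oplus 0}]=[-\varphi_f]\star[-\varphi_0]=[-\varphi_f]\star[Y]$, and unwinding $\varphi=\psi-[X_0]$ and cancelling $[f^{-1}(0)\times Y]$ gives the claim. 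So your suspicion about the stated value of $[\psi_0]$ is well founded, and your decision to bypass Thom--Sebastiani in favour of a direct computation with the Denef--Loeser resolution formula is a legitimate, self-contained alternative; the one step that genuinely needs checking, the identification of the cyclic covers over the product strata with $\widetilde{E}_I^\circ\times Y$ carrying the trivial $\hat{\mu}$-action on the second factor, is handled correctly by your local-coordinate argument. What the paper's route buys is brevity (one line from a quoted theorem); what yours buys is independence from the convention for $[\psi_0]$ and from Thom--Sebastiani altogether. Two minor caveats: the resolution formula presupposes that $f$ is not identically zero on any component of $X$, so that degenerate case must be handled separately by whichever convention for $[\psi_0]$ one adopts (it is consistent either way); and your final step uses that $\star$ by a class with trivial $\hat{\mu}$-action equals the ordinary product, which is slightly more than the remark in \S 1.1 literally asserts (agreement on the subalgebra where \emph{both} factors carry the trivial action), though it is the standard $\widetilde{\mathcal{M}}_\mathbb{C}$-module property of the convolution and follows directly from its definition via Fermat curves.
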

\end{NB}

We say that a $\mathbb{C}^*$-action on a variety $X$ is weakly circle compact 
if, for all $x\in X$, the limit $\lim_{\lambda \to 0}\lambda\cdot x$ exists.
\begin{thm}[\protect{\cite[Theorem B.1]{behrend_bryan_szendroi}}]\label{thm_BBS}
Let $f\colon X\to \mathbb{C}$ be a regular morphism on a smooth
quasi-projective complex variety. 
Assume that there exists an action of a connected complex torus $T$
on $X$ so that $f$ is $T$-equivariant with respect to a primitive character
 $\chi\colon T\to \mathbb{C}^*$, namely $f(t\cdot x)=\chi(t)f(x)$ for all $x\in X$ and $t\in T$.
We further assume that there exists a one parameter subgroup $\mathbb{C}^*\subset T$ such that the induced action is weakly circle compact. 
Then the motivic nearby cycle class $[\psi_f]$ is in $\mathcal{M}_\mathbb{C}\subset\mathcal{M}_\mathbb{C}^{\hat{\mu}}$ and is equal to $[X_1] = [f^{-1}(1)]$.
Consequently the motivic vanishing cycle class $[\varphi_f]$ is given by
$[\varphi_f] = [f^{-1}(1)] - [f^{-1}(0)]$.
\end{thm}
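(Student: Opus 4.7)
The strategy is to exploit the primitivity of $\chi$ to trivialize $f$ over $\mathbb{C}^*$, then use a $T$-equivariant resolution together with the Denef--Loeser formula to show that the cyclic $\hat\mu$-covers appearing there are trivial, and finally match the resulting sum against $[X_1]$.

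To begin, primitivity of $\chi$ lets me choose a cocharacter $\iota\colon \mathbb{C}^* \to T$ with $\chi\circ\iota = \mathrm{id}_{\mathbb{C}^*}$, yielding a splitting $T \cong \mathbb{C}^* \times T'$ with $T' = \ker\chi$. Then $(x,t) \mapsto \iota(t)\cdot x$ defines an isomorphism $X_1 \times \mathbb{C}^* \to f^{-1}(\mathbb{C}^*)$, with inverse $y \mapsto (\iota(f(y))^{-1}\cdot y,\, f(y))$, so cut-and-paste gives
\[
[X] = [X_0] + (\mathbb{L}-1)[X_1] \in \mathcal{M}_\mathbb{C},
\]
and $f$ restricted to $f^{-1}(\mathbb{C}^*)$ is a trivial algebraic family with fiber $X_1$. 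Since the analytic Milnor fibration near $0$ is a local model for this global family, one expects it to be trivial as well, and correspondingly $[\psi_f]$ to equal $[X_1]$ with trivial $\hat\mu$-action. This pins down the expected answer; the remainder of the argument is to verify it.

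To make this rigorous I would take a $T$-equivariant log resolution $\pi\colon \tilde X \to X$ of $(X,X_0)$ and write $\pi^{-1}(X_0) = \sum_{i \in S} N_i E_i$ as a simple normal crossings divisor. The Denef--Loeser formula writes $[\psi_f] \in \mathcal{M}_\mathbb{C}^{\hat\mu}$ as a universal polynomial in $\mathbb{L}$ and the classes of the $\mu_{m_I}$-étale covers $\tilde E_I^\circ \to E_I^\circ$, where $m_I = \gcd(N_i)_{i \in I}$ and the cover is obtained from an $m_I$-th root of $f\circ\pi$. The key point is that in $T$-equivariant étale local coordinates around a point of $E_I^\circ$, the function $f\circ\pi$ is a monomial $\prod_{i\in I} u_i^{N_i}$ and the $T$-action has weights $a_i \in X^*(T)$ on the $u_i$ satisfying $\sum_{i\in I} N_i a_i = \chi$. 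Writing $m = \gcd(N_i)_{i \in I}$, this reads $\chi = m \cdot \sum (N_i/m)\, a_i$, so primitivity of $\chi$ in $X^*(T)$ forces $m = 1$. Hence each cover is trivial as a $\hat\mu$-variety and $[\psi_f] \in \mathcal{M}_\mathbb{C}$.

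With all $m_I = 1$, the Denef--Loeser expression becomes a universal polynomial combination of the ordinary strata $[E_I^\circ]$, which I would then match against $[X_1]$ using the cut-and-paste identity $[\tilde X] = \sum_I [E_I^\circ] + (\mathbb{L}-1)[X_1]$ coming from $\tilde X \setminus \pi^{-1}(X_0) \cong f^{-1}(\mathbb{C}^*)$; in the trivial-monodromy case the Denef--Loeser sum is known to collapse to the class of the generic fiber. Alternatively, the weakly circle-compact $\mathbb{C}^*$-action on $\tilde X$ gives a Bialynicki-Birula stratification whose fixed locus is forced into $\pi^{-1}(X_0)$ by $\chi$-equivariance (since $f$ vanishes at $\mathbb{C}^*$-fixed points), and matching the two stratifications yields the identity. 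The main obstacle is the step above, namely producing compatible $T$-equivariant étale local models along every stratum of $\pi^{-1}(X_0)$ and extracting the divisibility relation $\gcd(N_i)_{i\in I} = 1$ from the weight identity and primitivity; once this is secure the remaining steps are routine motivic cut-and-paste.
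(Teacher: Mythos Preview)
The paper does not give its own proof of this statement; it is quoted from \cite{behrend_bryan_szendroi} and used as a black box. Your sketch, however, contains a genuine gap at its central step.

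You claim that primitivity of $\chi$ forces $m_I = \gcd(N_i)_{i\in I} = 1$ for every stratum $E_I^\circ$, via a weight identity $\chi = \sum_{i\in I} N_i a_i$ in $X^*(T)$. But this identity omits the weight of the unit: near a point of $E_I^\circ$ one has $f\circ\pi = u \cdot \prod_{i\in I} z_i^{N_i}$ with $u$ nowhere vanishing, and $u$ may carry a nonzero $T$-weight $w$, so the correct relation is $\chi = w + \sum_{i\in I} N_i a_i$, from which $m_I \mid \chi$ does not follow. Absorbing $u$ into the $z_i$ equivariantly would already require an $m_I$-th root of $u$, which is exactly what you are trying to avoid assuming. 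Concretely, take $X = \mathbb{C}^3$, $f = x^2 y^2 z$, $T = (\mathbb{C}^*)^3$ acting with the standard weights. Then $\chi = (2,2,1)$ is primitive and the diagonal one-parameter subgroup is weakly circle compact; yet $X_0$ is already simple normal crossings with $N_x = N_y = 2$, $N_z = 1$, and for $I = \{x,y\}$ one has $E_I^\circ = \{(0,0,z):z\neq 0\}$ with $m_I = \gcd(2,2)=2$. The unit there is $z$, of weight $(0,0,1)$, and the double cover $\tilde E_I^\circ = \{v^2 = z\}$ carries a nontrivial $\mu_2$-action.

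So the $\hat\mu$-monodromy is not killed stratum by stratum via $m_I = 1$. What actually happens is that the $\mu_{m_I}$-action on each $\tilde E_I^\circ$ extends to a $\mathbb{C}^*$-action coming from a suitable cocharacter of $T$ (in the example, $t\mapsto(1,1,t^2)$ lifts to $v\mapsto tv$), and it is this, together with the relation in $K_0^{\hat\mu}$ trivializing linear $\hat\mu$-representations on affine spaces, that forces $[\psi_f]\in\mathcal{M}_\mathbb{C}$. The identification with $[X_1]$ then needs a further argument in which the weak circle compactness is used in an essential way, not as the afterthought your ``alternatively'' suggests. Your opening paragraph (the trivialization $f^{-1}(\mathbb{C}^*)\cong X_1\times\mathbb{C}^*$ and the cut-and-paste identity $[X]=[X_0]+(\mathbb{L}-1)[X_1]$) is correct and is indeed an ingredient, but the resolution-side step must be redone along these lines.
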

\begin{rem}
In \textup{\cite[Theorem B.1]{behrend_bryan_szendroi}} they assume that, moreover, the fixed point set $X^{\mathbb{C}^*}$ is compact. 
As they themselves mention in \textup{\cite[pp15 l9-10]{behrend_bryan_szendroi}}, this assumption is not necessary. 
\end{rem}

\subsection{Virtual motives of critical loci}
Let $f\colon X\to \mathbb{C}$ be a regular function on a smooth variety $X$, 
and let $\mathrm{crit}(f) = \{df = 0\}\subset X$ be its degeneracy locus. 
\begin{defn}\label{defn_vir}
We define the virtual motive of $\mathrm{crit}(f)$ to be
\[
[\mathrm{crit}(f)]_\mathrm{vir} := -\mathbb{L}^{-\frac{\dim X}{2}}
[\varphi_f]\in \mathcal{M}^\mu_\mathbb{C}.
\]
\end{defn}
\begin{rem}
The virtual motive may depend not only on the scheme structure of the critical locus but also on the presentation as a critical locus.
\end{rem}
We a smooth variety $X$, we use the following notation :
\[
[X]_\mr{vir}:=[\mathrm{crit}(0\colon X\to \mathbb{C})]_\mathrm{vir}
=\mathbb{L}^{-\frac{\dim X}{2}}[X].
\]

\subsection{Motivic Donaldson-Thomas invariants}
Throughout this paper we assume that a quiver is finite and has no loops and oriented $2$-cycles.
Let $Q$ be a quiver,
$Q_0$ denote the set of vertices of $Q$ and 
$Q_1$ denote the set of arrows of $Q$.
For an arrow $e\in Q_1$, we denote by $t(e)\in Q_0$ (resp. $h(e)\in Q_0$) the vertex at which $e$ starts (resp. ends). 
Take a dimension vector $\mathbf{v}=(v_i)\in (\mathbb{Z}_{\geq 0})^{Q_0}$ and put $V_i=\mathbb{C}^{v_i}$. 
We define
\[
\mathrm{M}(Q;\mathbf{v}):=\bigoplus_{e\in {Q_1}}
\mathrm{Hom}(V_{t(e)},V_{h(e)})
\]
and 
\[
G(\mathbf{v}):=\prod_{i\in Q_0}\mathrm{GL}(V_i).
\]
Note that $G(\mathbf{v})$ naturally acts on $\mathrm{M}(Q;\mathbf{v})$ and the quotient gives the moduli stack of representations of $Q$ with dimension vectors $\mathbf{v}$.
Let $\chi_Q \colon \mathbb{Z}^{Q_0} \times \mathbb{Z}^{Q_0} \to \mathbb{Z}$ be the bilinear form \footnote{This is the Euler form on the Grothendieck group of the category of finite-dimensional representations
of $Q$.} given by 
\[
\chi_Q(\mathbf{v},\mathbf{v}'):=
-\sum_{i,j\in Q_0}Q_{ij}v_iv'_j+
\sum_{i\in Q_0}v_iv'_i.
\]
Then we have
\[
\dim\mathrm{M}(Q;\mathbf{v})
-
\dim G(\mathbf{v})
=
-\chi_Q(\mathbf{v},\mathbf{v}).
\]

Let $W$ be a potential, that is, a finite linear combination of cyclic paths in $Q$. 
Let $f_{W,\mathbf{v}}$ be the $G(\mathbf{v})$-invariant function on $\mathrm{M}(Q;\mathbf{v})$ defined by taking the trace of the map associated to the potential $W$.
A point in the critical locus $\mathrm{crit}(f_{W,\mathbf{v}})$ gives a $J(Q,W)$-module and the quotient stack 
\[
\bigl[\mathrm{crit}(f_{W,\mathbf{v}})/G(\mathbf{v})\bigr]
\]
gives the moduli stack of $J(Q,W)$-modules with dimension vectors $\mathbf{v}$.
\footnote{In this sense, the function $f_{W,\mathbf{v}}$ is called a Chern-Simons functional.}

\begin{defn}
For $(Q,W)$ and $\mathbf{v}$, we define {\it motivic Donaldson-Thomas invariant} by 
\[
\mathfrak{M}_{\mathrm{vir}}(Q,W;\mathbf{v}):=
\frac{[\mathrm{crit}(f_{W,\mathbf{v}})]_\mathrm{vir}}{[G(\mathbf{v})]_\mr{vir}}
\in 
\widetilde{\mathcal{M}}^{\hat{\mu}}_\mathbb{C}.
\]
\end{defn}

\section{Cut of a QP and truncated Jacobian}
\subsection{Cut of a QP}\label{subsec_21}
Let $(Q,W)$ be a QP. 
To each subset $C\subset Q_1$ we associate a grading $g_C$ on $Q$ by
\[
g_C(a) = 
\begin{cases}
1 & a \in C,\\
0 & a \notin C.
\end{cases}
\]
Denote by $Q_C$, the subquiver of $Q$ with the vertex set $Q_0$ and the arrow set $Q_1\backslash C$.
\begin{defn}[\protect{\cite[\S 3]{herschend-iyama}}]\label{defn_cut}
A subset $C\subset Q_1$ is called a \textup{cut} if $W$ is homogeneous of degree $1$ with respect to $g_C$.
\end{defn}
If $C$ is a cut, then $g_C$ induces a grading on $J(Q,W)$ as well. 
The degree $0$ part of $J(Q,W)$ is denoted by $J(Q,W)_C$ 
and called the {\it truncated Jacobian algebra}. 
We have
\begin{align*}\label{eq_t_Jacobian}
J(Q,W)_C
&=
J(Q,W)\big/\langle C\rangle\\
&=
\mathbb{C}Q_C\big/\langle \partial_aW\mid a\in C\rangle.
\end{align*}


Here we will show two examples of cuts.
\subsection{Example(1) : bipartite graph and perfect matching}\label{subsec_ex1}
The following example is studied by \cite{Ishii_Ueda2}.

Let $\Sigma$ be a real $2$-dimensional oriented manifold and $\Gamma$ be a bipartite graph on $\Sigma$, that is, $\Gamma$ is a triple $(B,R,E)$ of disjoint finite subsets $B$ (the set of blue vertices) and $R$ (the set of red vertices) of $\Sigma$ and a set of $1$-cells $E$ such that 
\begin{itemize}
\item any two elements of $E$ do not intersect in their interiors. 
\item each element of $E$ connects one element in $B$ and another element in $R$.
\end{itemize}


We take the dual graph of $\Gamma$. 
For each element $e$ in $E$, we define the orientation of the dual edge $\hat{e}$ so that $\hat{e}$ crosses with $e$ keeping the blue boundary of $e$ on the right hand side. Let $Q_\Gamma$ denote the resulting quiver.
For $b\in B$ (resp. $r\in R$), let $w_b$ (resp. $w_r$) be the minimal cyclic path in $Q_\Gamma$ which goes around $b$ (resp. $r$) clockwise (resp. anti-clockwise). 
We put
\[
w_\Gamma:=\sum_{b\in B}w_b-\sum_{r\in R}w_r.
\]

\begin{ex}\label{ex1}
Let $\Gamma$ be the bipartite graph on a torus in the left of Figure \ref{fig1}.
The corresponding quiver $Q_\Gamma$ is given in the right of Figure \ref{fig1}.
The potential $W_\Gamma$ is given by 
\[
W_\Gamma=a_1b_1c_1d_1-a_1b_2c_1d_2-a_2b_1c_2d_1+a_2b_2c_2d_2.
\]
The quiver with potential is known to be derived equivalent to the quotient stack $[(\mathcal{O}_{\mathbb{P}^1}(-1)\oplus\mathcal{O}_{\mathbb{P}^1}(-1))/(\mathbb{Z}/2\mathbb{Z})]$ where $\mathbb{Z}/2\mathbb{Z}\subset \mr{SL}(\mathbb{C},2)$ acts fiberwise.
\begin{figure}[htbp]
  \centering
  \input{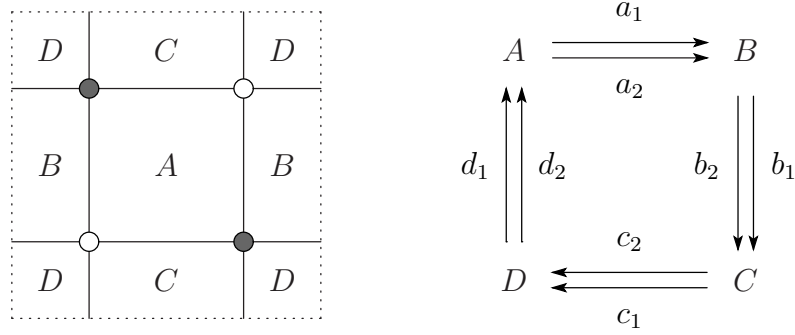}
  \caption{an example of a bipartite graph and the quiver}
  \label{fig1}
\end{figure}
\end{ex}

A {\it perfect matching} $P$ is a subset of $E$ such that each element $v\in B\cup R$ there exists exactly one element in $P$ which has $v$ as its boundary.
It is easy to check that any perfect matching $P$, as a subset of $(Q_\Gamma)_1=E$, gives a cut of the QP $(Q_\Gamma,W_\Gamma)$.

\begin{ex}
Let $P$ be the perfect matching in Figure \ref{fig2}. 
The corresponding cut is $\{a_1,a_2\}$.
\begin{figure}[htbp]
  \centering
  \input{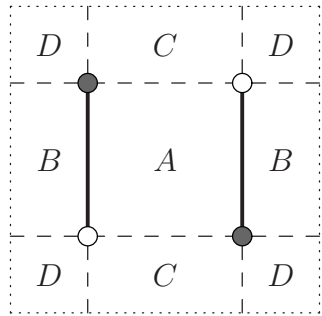}
  \caption{an example of a perfect matching}
  \label{fig2}
\end{figure}
\end{ex}

\subsection{Example(2) Geometric helices on Del Pezzo surfaces}\label{subsec_ex2}
The following example is studied by \cite{bridgeland-stern}.

Let $Y$ be a Del-Pezzo surface and let $(E_i)_{i=1,...,n}$ be a full exceptional collection on $Y$. 
Put $\mathbb{E} = \bigoplus_{i=1}^n E_i$ and define $A(\mathbb{E}) := \mr{End}(\mathbb{E})$.
We put
\[
\mathbb{H}(\mathbb{E})=(E_i)_{i\in \mathbb{Z}}
:=(\ldots, \omega_Y\otimes E_n,E_1,\ldots,E_n,\omega_Y^{-1}\otimes E_1,\ldots).
\]
and assume that
\begin{itemize}
\item $(E_i,\ldots,E_{i+N-1})$ is an exceptional collection on $Y$ for any $i$, and
\item $\mathrm{Hom}^k(E_i,E_j)=0$ for any $k\neq 0$ and any $i<j$.
\end{itemize}
Such a sequence is $(E_i)_{i\in \mathbb{Z}}$ is called a {\it geometric helix} (\cite{bondal-polishchuk}).
The {\it rolled up helix algebra} is the $\mathbb{Z}$-graded algebra
\[
{B}(\mathbb{H}) = 
\bigoplus_{k\in\mathbb{Z}}
\mathrm{Hom}(E,\omega_Y^{-k}\otimes E)
\]
with the obvious multiplication.

The following theorem is proved in \cite[\S 6.9]{keller-completion} and \cite[Appendix A]{VdB-example}.
\begin{thm}\label{thm-helix}
There is a QP $(Q,W)$ and a cut $C$ such that
\[
B(\mathbb{H})\simeq J(Q,W)
,\quad 
A(\mathbb{E})\simeq J(Q,W)_C.
\]
\end{thm}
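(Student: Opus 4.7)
The strategy is to first identify the quiver with relations presenting the finite-dimensional algebra $A(\mathbb{E})$, then enlarge it by adding ``period-one'' arrows to form a quiver $Q$ whose Jacobi algebra, for a suitable potential, recovers the rolled-up helix algebra $B(\mathbb{H})$. The added arrows will constitute the cut $C$, and the $\omega_Y^{-1}$-grading on $B(\mathbb{H})$ will correspond to $g_C$.

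\textbf{Step 1 (the quiver).} By the assumption $\mathrm{Hom}^k(E_i,E_j)=0$ for $k\neq 0$ and $i<j$, the exceptional collection $(E_1,\ldots,E_n)$ is strong, so $A(\mathbb{E})=\mathrm{End}(\mathbb{E})$ is a basic finite-dimensional algebra. Fix a presentation $A(\mathbb{E})\cong \mathbb{C}Q_C/I$, where $(Q_C)_0=\{1,\ldots,n\}$ and the arrows from $i$ to $j$ are a chosen basis of $\mathrm{rad}(i,j)/\mathrm{rad}^2(i,j)$. Now enlarge $Q_C$ to $Q$ by adjoining, for each pair $(i,j)$, arrows from $i$ to $j$ in bijection with a basis of the irreducible part of $\mathrm{Hom}(E_i,\omega_Y^{-1}\otimes E_j)$; let $C\subset Q_1$ be this set of new arrows. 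These correspond morally to morphisms that ``cross one period'' of the helix.

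\textbf{Step 2 (the potential from the local CY3).} Construct $W$ from the geometry of the total space $X:=|\omega_Y|$, which is a local Calabi--Yau $3$-fold. Pulling back $\mathbb{E}$ along the zero-section produces (after suitable enlargement) a tilting object on $X$, and the CY3 property endows its derived endomorphism algebra with a cyclic $A_\infty$-structure. By Keller's $3$-Calabi--Yau completion of $A(\mathbb{E})$ (as in \cite{keller-completion}) together with Van den Bergh's explicit presentation (\cite{VdB-example}), one extracts a potential $W$ on $Q$ such that $J(Q,W)\cong B(\mathbb{H})$, with the $C$-arrows carrying precisely the first $\omega_Y^{-1}$-twist. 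Because every cyclic generator of the CY3 completion couples a ``relation of $A(\mathbb{E})$'' with exactly one dual period-jumping morphism, each cyclic summand of $W$ contains exactly one arrow of $C$.

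\textbf{Step 3 (cut condition and truncation).} By the preceding bookkeeping, $g_C$ coincides with the natural $\mathbb{Z}_{\geq 0}$-grading of $B(\mathbb{H})$ by powers of $\omega_Y^{-1}$, and $W$ is homogeneous of $g_C$-degree $1$; hence $C$ is a cut in the sense of Definition \ref{defn_cut}. Taking degree-zero parts yields
\[
J(Q,W)_C = \mathbb{C}Q_C\big/\langle \partial_a W\mid a\in C\rangle,
\]
and the cyclic derivatives $\partial_a W$ with $a\in C$ are, by construction in Step 2, exactly the defining relations $I$ of $A(\mathbb{E})$. This gives $J(Q,W)_C\cong A(\mathbb{E})$.

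\textbf{Main obstacle.} The technical heart is Step 2: producing the potential $W$ canonically and verifying that its $C$-derivatives recover the relations of $A(\mathbb{E})$. This is where one genuinely uses the CY3 geometry of $|\omega_Y|$ and the cyclic $A_\infty$-formalism; the cut property and the identification in Step 3 then follow essentially formally from the way the $\omega_Y^{-1}$-grading interacts with the cyclic structure.
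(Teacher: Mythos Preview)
The paper does not actually supply a proof of this theorem: immediately before the statement it says ``The following theorem is proved in \cite[\S 6.9]{keller-completion} and \cite[Appendix A]{VdB-example}'', and no argument is given. So there is nothing in the paper to compare your proposal against beyond that citation.

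That said, your sketch is faithful to what those references do. Keller's \S 6.9 realises $B(\mathbb{H})$ as the $3$-Calabi--Yau completion of the global-dimension-$\leq 2$ algebra $A(\mathbb{E})$, which is exactly your Step~2, and Van den Bergh's appendix gives the explicit presentation as a Jacobi algebra of a QP. The identification of the cut with the $\omega_Y^{-1}$-grading (your Step~3) is then immediate from the tensor-algebra description of the completion. One small point of care in Step~1: the arrows you add for $C$ are more naturally indexed by a basis of $\mathrm{Ext}^2_{A(\mathbb{E})}(S_j,S_i)$ between the simple $A(\mathbb{E})$-modules (dual to a minimal set of relations), and it is Serre duality on $Y$ that matches this with the degree-one piece $\mathrm{Hom}(E_i,\omega_Y^{-1}\otimes E_j)$ of $B(\mathbb{H})$; phrasing it that way makes the claim ``$\partial_aW$ for $a\in C$ recover the relations of $A(\mathbb{E})$'' in Step~3 tautological rather than something to be checked.
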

\begin{ex}\label{ex3}
We take $Y:=\mathbb{P}_1\times \mathbb{P}_1$ 
and a geometric helix
\[
\ldots,
\mathcal{O},
\mathcal{O}(1,0),
\mathcal{O}(0,1),
\mathcal{O}(1,1),
\mathcal{O}(2,2)=\omega^{-1}_Y\otimes \mathcal{O},\ldots
\]
where we put 
$\mathcal{O}(a,b)=
\pi_1^*(\mathcal{O}_{\mathbb{P}_1}(a))
\otimes
\pi_2^*(\mathcal{O}_{\mathbb{P}_1}(b))$.
We take the quiver $Q$ in Figure \ref{fig3} and the potential 
\[
W:= \sum_{i,j\in\{1,2\}}U_{ij}(t_is_j-S_jT_i),
\]
then $(Q,W)$ satisfies the conditions in Theorem \ref{thm-helix}.

\begin{figure}[htbp]
  \centering
  \input{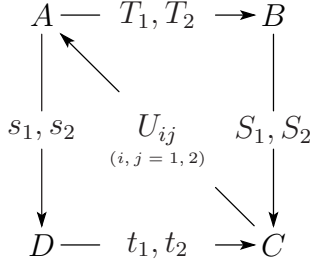}
  \caption{quiver for $\mathbb{P}_1\times \mathbb{P}_1$}
  \label{fig3}
\end{figure}

Note that $\mathbb{P}_1\times \mathbb{P}_1$ gives a crepant resolution of the quotient singularity $(\mathcal{O}_{\mathbb{P}^1}(-1)\oplus\mathcal{O}_{\mathbb{P}^1}(-1))/(\mathbb{Z}/2\mathbb{Z})$. 
In fact, the QP in this example is obtained by mutating the one in Example \ref{ex1} at the vertex $D$ and they are derived equivalent.
\end{ex}

\subsection{Mutation of QP and cut}
Extending Fomin and Zelevinsky mutations of quivers \cite{fomin-zelevinsky1}, Derksen, Weyman, and Zelevinsky have introduced the notion of mutation of QPs in \cite{quiver-with-potentials}.
As pointed out in \cite{graded_mutation}, we can extend the definition to the graded setting. 
Let $(Q,W,d)$ be a $\mathbb{Z}$-graded quiver with a homogeneous potential of degree $r$ and $k$ be vertex of $Q$.
We define $\widetilde{\mu}^L_k(Q,W,d) = (\widetilde{Q},\widetilde{W},\widetilde{d})$ the left mutation of $(Q,W,d)$ at vertex $k$ as follows : 
\begin{itemize}
\item[(1)]
the new quiver $\widetilde{Q}$ is defined as follows :
\begin{itemize}
\item[(a)]
for any subquiver $u \overset{a}{\longrightarrow} k \overset{b}{\longrightarrow} v$ with $k$, $u$ and $v$
pairwise different vertices, we
add an arrow $[ba]\colon u\to v$;
\item[(b)]
we replace all arrows $a$ incident with $k$ by an arrow $a^*$ in the opposite direction.
\end{itemize}
\item[(2)]
The new potential $\widetilde{W}$ is defined by the sum $[W] + \Delta$ where $[W]$ is
formed from the potential $W$ replacing all compositions $ba$ through the vertex $k$ by the new arrows $[ba]$, and where $\Delta$ is the sum $\sum a^*b^*[ba]$.
\item[(3)]
The new degree $\widetilde{d}$ is defined as follows :
\begin{itemize}
\item[(a)]$\widetilde{d}(a) = d(a)$ for $a$ not incident to $k$ ;
\item[(b)]$\widetilde{d}([ba]) = d(b) + d(a)$ for a composition $ba$ passing through $k$ ;
\item[(c)]$\widetilde{d}(a^*)=-d(a)+r$ if $t(a)=k$;
\item[(d)]$\widetilde{d}(b^*)=-d(b)$ if the source of $s(b)=k$.
\end{itemize}
\end{itemize}
By the graded version (\cite[Theorem 6.4]{graded_mutation}) of the splitting theorem \cite[Theorem 4.6]{quiver-with-potentials}, any graded QP $(Q,W,d)$ has a direct sum decomposition
\[
(Q,W,d)=(Q,W,d)^{\mathrm{red}}\oplus (Q,W,d)^{\mathrm{triv}}
\]
into a reduced graded QP and a trivial graded QP.
The decomposition is unique up to graded right equivalence. 

Assume that the potential $W$ is generic in the sense of \cite{quiver-with-potentials}. 
Then the underlying quiver of the reduction $\mu^L_k(Q,W,d):=(\widetilde{Q},\widetilde{W},\widetilde{d})^\mr{red}$ of the left mutation $\widetilde{\mu}^L_k(Q,W,d) = (\widetilde{Q},\widetilde{W},\widetilde{d})$ coincides with Fomin-Zelevinsky's mutation.
\begin{defn}[\protect{\cite[Definition 6.12]{herschend-iyama}}]\label{defn_strict}
Let $C$ be a cut of a QP $(Q,W)$. 
We say that a vertex $k$ of $Q$ is a strict source (resp. sink) of $(Q,C)$ if all arrows ending (resp. starting) at $x$ belong to $C$ and all arrows starting (ending) at $x$ do not belong to $C$.
\end{defn}
\begin{ex}
\noindent \textup{(1)} Let $(Q,W)$ and $C$ be given as in \S \ref{subsec_ex1}.
A vertex is a strict source or a strict sink if and only if the vertices of the corresponding face of the bipartite graph is perfectly matched by the perfect matching. 

\noindent \textup{(2)} Let $(Q,W)$ and $C$ be given as in \S \ref{subsec_ex2}. Then the vertex corresponding to the exceptional object $E_1$ is a strict source.\end{ex}
The underlying graded quiver of $\mu^L_k(Q,W,d_C)$ is given as follows: 
\begin{itemize}
\item[(a)]
add degree $1$ arrows $[ba]$ ;
\item[(b)]
replace $a$ with a degree $0$ arrow $a^*$ ; 
\item[(c)]
cancel $2$-cycles\footnote{Since we assume $W$ is generic, we can see any $2$-cycle has degree $1$. So this step has no ambiguity even in the graded sense.}.
\end{itemize}
The new degree gives a cut of the mutated QP $\mu_k(Q,W)$. 
Let $\mu_kC$ denote this cut.
\begin{rem}
Given a strict source $k$, a new cut $C_k$ of $(Q,W)$ is defined \textup{(\cite[Definition 6.10]{herschend-iyama})} \footnote{They call $C_k$ the {\it cut mutation}.}.
If $(Q,W)$ is Calabi-Yau, then $J(\mu_kQ,\mu_kW)_{\mu_kC}$ is isomorphic to $J(Q,W)_{C_k}$.
\end{rem}



\begin{NB}
\subsection{Category equivalences}
\begin{NB2}
I'm sure that we can replace this subsection with the arguments in \cite[\S 10]{quiver-with-potentials}.
\end{NB2}%
Let $\Gamma_{Q,W}$ be the Ginzburg dg algebra and 
$\mathcal{D}_{Q,W}$ be the derived category of dg modules with finite dimensional cohomologies over $\Gamma_{Q,W}$.
Let 
\[
\Phi=\Phi_k \colon \mathcal{D}_{Q,W} \overset{\sim}{\longrightarrow} \mathcal{D}_{Q',W'}
\]
be the derived equivalence (\cite{dong-keller,keller-completion}) which induces the isomorphism $\phi=\phi_k$ of the Grothendieck group given by
\[
\phi([s_i])=
\begin{cases}
[s'_i]+Q(i,k)[s'_k] & i\neq k.\\
-[s'_k] & i=k.
\end{cases}
\]

Assume that $(Q,W)$ is Calabi-Yau, that is, $\Gamma_{Q,W}$ is concentrated on degree $0$. 
In such a case 
\begin{itemize}
\item[(1)] $\mathcal{D}_{Q,W}\simeq D^b(\mr{mod}J(Q,W))$, 
\item[(2)] $\mu_k(Q,W)$ is also Calabi-Yau (\cite{keller-completion}).
\end{itemize} 
Let $\Phi$ be the derived equivalence between $D^b(\mr{mod}J(Q,W))$ and $D^b(\mr{mod}J(\mu_kQ,\mu_kW))$. For any $V\in D^b(\mr{mod}J(Q,W))$, we have 
\[
H^j(\Phi(V))_i=0\ (j\neq 0),\quad H^0(\Phi(V))_i=V_i
\]
for $i\neq k$ and the map 
\[
[ba] \colon H^0(\Phi(V))_{s(a)} \to H^0(\Phi(V))_{t(b)}
\]
is given by the composition $b\circ a$.
We define the following full subcategories 
\begin{align*}
\mr{mod}(J(Q,W))_k
&:=
\{V\in \mr{mod}(J(Q,W))\mid \Hom(s_k,V)=0\},\\
\mr{mod}(J(Q,W)_C)_k
&:=
\mr{mod}(J(Q,W))_k
\cap 
\mr{mod}(J(Q,W)_C),\\
\mr{mod}(J(Q',W'))^k
&:=
\{V\in \mr{mod}(J(Q',W'))\mid \Hom(V,s'_k)=0\},\\
\mr{mod}(J(Q',W')_{C'})^k
&:=
\mr{mod}(J(Q',W'))^k
\cap 
\mr{mod}(J(Q',W')_{C'}).
\end{align*}
Note that for $V\in \mr{mod}(J(Q,W))$ the condition $\Hom(s_k,V)=0$ is equivalent to the injectiveness of
\[
\sum_{s(b)=k}V_k\to \biggl(\,\bigoplus_{s(b)=k} V_{t(b)}\biggr)
\]
and for $V\in \mr{mod}(J(Q',W'))$ the condition $\Hom(V,s'_k)=0$ is equivalent to the surjectiveness of
\[
\sum_{t(a)=k}\biggl(\,\bigoplus_{t(a)=k} V_{s(a)}\biggr)\to V_k.
\]

It is shown in \cite{keller-completion}, the derived equivalence $\Phi$ induces 
\[
\mr{mod}(J(Q,W))_k
\simeq
\mr{mod}(J(Q',W'))^k.
\]
\begin{prop}\label{prop_tilting}
The derived equivalence induces 
\[
\mr{mod}(J(Q,W)_C)_k
\simeq
\mr{mod}(J(Q',W')_{C'})^k.
\]
\end{prop}
\begin{proof}
First, take $E\in \mr{mod}(J(Q,W)_C)_k$.
For a composition $ba$ through $k$, $[bc]$ vanishes on $\Phi(E)$ since $a$ vanishes on $E$.
Hence we have $\Phi(E)\in \mr{mod}(J(Q',W')_{C'})^k$.

Next, assume that $\Phi(E)\in \mr{mod}(J(Q',W')_{C'})^k$ for $E\in \mr{mod}(J(Q,W))_k$.
For any $a$ with $t(a)=k$, $a$ vanishes on $E$ since 
$\sum_{s(b)=k}[ba]$ vanished and $\sum_{s(b)=k}b$ is injective. 
Hence we have $E\in \mr{mod}(J(Q,W)_{C})_k$.
\end{proof}
\begin{rem}
This gives a generalization of a part of the results of \cite{bridgeland-stern}.
\end{rem}
\end{NB}

\section{Factorization property}\label{sec_FP}
Let $(Q,W)$ be a QP and $C$ be a cut.
The grading $g_C$ gives a $\mathbb{C}^*$-action on $\mathrm{M}(Q;\mathbf{v})$ and the action satisfies the assumption of Theorem \ref{thm_BBS} if we put 
\[
X=\mathrm{M}(Q;\mathbf{v}), \quad
T=\mathbb{C}^*,\quad
f=f_{W,\mathbf{v}}.
\]
Hence we have
\begin{equation}\label{eq_no_monodoromy}
\mathfrak{M}_{\mathrm{vir}}(Q,W;\mathbf{v})
\in
\widetilde{\mathcal{M}}_\mathbb{C}
\end{equation}
and 
\begin{align*}
\mathfrak{M}_{\mathrm{vir}}(Q,W;\mathbf{v})
&=
\frac{\mathbb{L}^{-\dim \mathrm{M}(Q;\mathbf{v})/2}\cdot (f_{W,\mathbf{v}}^{-1}(1)-f_{W,\mathbf{v}}^{-1}(0))}
{[G(\mathbf{v})]_\mr{vir}}\\
&=
\mathbb{L}^{\chi_Q(\mathbf{v},\mathbf{v})/2}\times \frac{f_{W,\mathbf{v}}^{-1}(1)-f_{W,\mathbf{v}}^{-1}(0)}
{[G(\mathbf{v})]}
.
\end{align*}
We define the {\it refined DT invariant} by
\[
m_{\mr{ref}}(Q,W;\mathbf{v})
:=
\mathbb{W}\bigl(
\mathfrak{M}_{\mathrm{vir}}(Q,W;\mathbf{v})
\bigr)
\in \mathbb{Q}(q^{1/2}).
\]

Throughout this section, we will use simplified notations such as 
$\mathrm{M}(\mathbf{v})$ and $f_{\mathbf{v}}$
instead of 
$\mathrm{M}(Q;\mathbf{v})$ and $f_{W,\mathbf{v}}$
omitting $Q$ and $W$.

\subsection{Filtration and motivic invariants}
\begin{NB}
Let $X$, $T$, $f$ be as in Theorem \ref{thm_BBS} and
\[
0=U_0\subset U_1\subset \cdots \subset U_n =X
\]
be a filtration of $X$ by $T$-invariant open subsets. 
We put $X_\alpha:=U_\alpha\backslash U_{\alpha-1}$ and $f_\alpha:=f|_{X_\alpha}$.
Assume that $X_\alpha$, $f_\alpha$ and the $T$-action satisfies the conditions in Theorem \ref{thm_BBS} too. 
The next proposition directly follows Theorem \ref{thm_BBS} and Definition \ref{defn_vir}.
\end{NB}
The next proposition directly follows Theorem \ref{thm_BBS} and Definition \ref{defn_vir}.
\begin{prop}\label{prop_31}
Let $X$, $T$, $f$ be as in Theorem \ref{thm_BBS} and $Y$ be a smooth $T$-invariant closed subset $X$ with dimension $d$.
We assume that the $T$-action on $Y$ and $f|_Y$ satisfies the conditions in Theorem \ref{thm_BBS} as well.
Then we have
\[
[\mathrm{crit}(f)]_\mathrm{vir}
=
[\mathrm{crit}(f|_{X\backslash Y})]_\mathrm{vir}
+
\mathbb{L}^{-\frac{d}{2}}
[\mathrm{crit}(f|_{Y})]_\mathrm{vir}.
\]
\end{prop}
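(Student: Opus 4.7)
The plan is to prove the statement by a direct scissor computation combining Theorem \ref{thm_BBS} with Definition \ref{defn_vir}. Here $d$ should be read as the codimension of $Y$ in $X$ (as in the introduction's sketch), so that $\dim X = \dim Y + d$.

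First, I would apply Theorem \ref{thm_BBS} to both $(X,T,f)$ and $(Y,T,f|_Y)$ to obtain the explicit expressions
\begin{equation*}
[\varphi_f] = [f^{-1}(1)] - [f^{-1}(0)], \qquad [\varphi_{f|_Y}] = [(f|_Y)^{-1}(1)] - [(f|_Y)^{-1}(0)]
\end{equation*}
as classes in $\mathcal{M}_\mathbb{C}$. Since $Y\subset X$ is closed with open complement $X\setminus Y$, the scissor relation in $K_0(\mathrm{Var}/\mathbb{C})$ gives $[f^{-1}(a)] = [(f|_{X\setminus Y})^{-1}(a)] + [(f|_Y)^{-1}(a)]$ for $a\in\{0,1\}$. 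Substituting these into the formula for $[\varphi_f]$, multiplying by $-\mathbb{L}^{-\dim X/2}$, and regrouping using the factorization $\mathbb{L}^{-\dim X/2} = \mathbb{L}^{-d/2}\cdot\mathbb{L}^{-\dim Y/2}$ yields, via Definition \ref{defn_vir},
\begin{equation*}
[\mathrm{crit}(f)]_\mathrm{vir} = -\mathbb{L}^{-\dim X/2}\bigl([(f|_{X\setminus Y})^{-1}(1)] - [(f|_{X\setminus Y})^{-1}(0)]\bigr) + \mathbb{L}^{-d/2}[\mathrm{crit}(f|_Y)]_\mathrm{vir}.
\end{equation*}

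All that remains is to identify the first summand with $[\mathrm{crit}(f|_{X\setminus Y})]_\mathrm{vir}$, i.e.\ to establish the BBS-type identity $[\varphi_{f|_{X\setminus Y}}] = [(f|_{X\setminus Y})^{-1}(1)] - [(f|_{X\setminus Y})^{-1}(0)]$. This is the main obstacle: Theorem \ref{thm_BBS} does not apply directly to $X\setminus Y$ because the weak circle compactness can fail on the complement (orbits through points of $X\setminus Y$ may limit into $Y$). I would handle this by invoking the locality of Denef-Loeser's motivic nearby cycle with respect to open immersions---the local motivic Milnor fiber of $f$ at $x\in X\setminus Y$ is computed inside any open neighbourhood of $x$, and summing over $x$ identifies the $(X\setminus Y)$-part of $[\psi_f]$ with $[\psi_{f|_{X\setminus Y}}]$---and then combine this with the BBS formula for $[\psi_f]$ already used above. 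Everything else is a formal manipulation of the scissor relation.
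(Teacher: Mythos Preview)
Your scissor computation is exactly what the paper intends: it states that the proposition ``directly follows Theorem~\ref{thm_BBS} and Definition~\ref{defn_vir}'' with no further argument, and your reading of $d$ as the codimension (cf.\ equation~\eqref{eq_motivic} in the introduction) is correct.

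The concern you raise about $X\setminus Y$ is legitimate, but your proposed fix via locality does not close the gap. Locality of the Denef--Loeser nearby cycle gives a decomposition of the \emph{relative} class $\mathcal{S}_f\in\mathcal{M}^{\hat\mu}_{X_0}$: its restriction to $(X\setminus Y)\cap X_0$ pushes forward to $[\psi_{f|_{X\setminus Y}}]$, while the complementary piece is the pushforward of $\mathcal{S}_f|_{Y\cap X_0}$. But the BBS identity $[\psi_f]=[f^{-1}(1)]$ is only asserted globally in $\mathcal{M}_\mathbb{C}$, not relatively over $X_0$, so you cannot read off the open piece from it. Concretely, what your argument would need is that the pushforward of $\mathcal{S}_f|_{Y\cap X_0}$ equals $[\psi_{f|_Y}]$---i.e.\ that restricting the nearby cycle of $f$ to the closed set $Y$ agrees with the nearby cycle of $f|_Y$---and this is false in general (it sees the ambient behaviour of $f$ near $Y$, not just $f|_Y$).

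That said, the paper glosses over exactly this point. The Corollary immediately following Proposition~\ref{prop_31} restores rigor by \emph{assuming} that every stratum $X_\alpha$ satisfies the hypotheses of Theorem~\ref{thm_BBS}, and that is all that is actually used downstream. So the honest version of the proposition either (i) adds the hypothesis that $X\setminus Y$ also satisfies Theorem~\ref{thm_BBS}, after which your scissor computation is a complete proof, or (ii) appeals to a relative refinement of \cite[Theorem~B.1]{behrend_bryan_szendroi}, which the statement of Theorem~\ref{thm_BBS} does not provide. Your approach is the paper's approach; the only defect is that neither you nor the paper supplies the missing ingredient for $X\setminus Y$, and the locality argument you sketch does not supply it either.
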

\begin{cor}
Let 
\[
0=U_0\subset U_1\subset \cdots \subset U_n =X
\]
be a filtration of $X$ by $T$-invariant open subsets. 
We put $X_\alpha:=U_\alpha\backslash U_{\alpha-1}$ and $f_\alpha:=f|_{X_\alpha}$.
Assume that $X_\alpha$, $f_\alpha$ and the $T$-action satisfies the conditions in Theorem \ref{thm_BBS}. Then we have
\[
[\mathrm{crit}(f)]_\mathrm{vir}
=
\sum_{\alpha=1}^n \mathbb{L}^{\frac{-\mr{dim}X+\mr{dim}X_\alpha}{2}}
[\mathrm{crit}(f_\alpha)]_\mathrm{vir}.
\]
\end{cor}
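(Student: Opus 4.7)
The plan is to apply Theorem \ref{thm_BBS} once to $(X,T,f)$ and once to each stratum $(X_\alpha, T, f_\alpha)$, and then combine the resulting identities via the scissor relations in $K_0(\mathrm{Var}/\mathbb{C})$. One could equivalently induct on $n$, peeling off the closed stratum $X_n = X \setminus U_{n-1}$ and invoking Proposition \ref{prop_31} at each step, but the inductive route would require knowing that Theorem \ref{thm_BBS}'s hypotheses (in particular weak circle compactness) are inherited by the intermediate opens $U_\alpha$, which is not among the stated assumptions. Applying Theorem \ref{thm_BBS} only to $X$ itself and to each $X_\alpha$, as permitted by the hypotheses, sidesteps this.

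By Theorem \ref{thm_BBS}, $[\varphi_f] = [f^{-1}(1)] - [f^{-1}(0)]$ in $\mathcal{M}_\mathbb{C}$, and for each $\alpha$ likewise $[\varphi_{f_\alpha}] = [f_\alpha^{-1}(1)] - [f_\alpha^{-1}(0)]$. The filtration decomposes $X$ as a disjoint union of the locally closed strata $X_\alpha$, and this decomposition restricts fiberwise over the points $0,1 \in \mathbb{C}$. Iterating the scissor relation $[U_\alpha] = [U_{\alpha-1}] + [X_\alpha]$ (and its fiberwise analogues) yields $[f^{-1}(c)] = \sum_\alpha [f_\alpha^{-1}(c)]$ for $c \in \{0,1\}$, and therefore
\[
[\varphi_f] = \sum_{\alpha=1}^n [\varphi_{f_\alpha}] \in \mathcal{M}_\mathbb{C}.
\]
Multiplying this identity by $-\mathbb{L}^{-\dim X/2}$ and inserting the compensating factors $\mathbb{L}^{\pm \dim X_\alpha/2}$ stratum by stratum gives
\[
[\mathrm{crit}(f)]_\mathrm{vir} = \sum_{\alpha=1}^n \mathbb{L}^{(-\dim X + \dim X_\alpha)/2} \bigl(-\mathbb{L}^{-\dim X_\alpha/2}[\varphi_{f_\alpha}]\bigr) = \sum_{\alpha=1}^n \mathbb{L}^{(-\dim X + \dim X_\alpha)/2} [\mathrm{crit}(f_\alpha)]_\mathrm{vir},
\]
which is the desired equation.

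There is no real obstacle here; the content is conceptual rather than computational. The $T$-equivariance and weak circle compactness, via Theorem \ref{thm_BBS}, collapse the motivic vanishing cycle into a genuine class of varieties, and the statement then reduces to elementary additivity in $K_0(\mathrm{Var}/\mathbb{C})$. The one point worth emphasizing in the write-up is the observation above on avoiding iterated use of Proposition \ref{prop_31}, so that the hypothesis of the corollary is applied exactly where it is stated, namely on the strata.
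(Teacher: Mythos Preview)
Your proof is correct and, at the level of mechanism, coincides with the paper's: both reduce to applying Theorem~\ref{thm_BBS} to express each $[\varphi_{\bullet}]$ as a difference of fiber classes and then invoking the scissor relation. The paper leaves the corollary unproved, presenting it as an immediate consequence of Proposition~\ref{prop_31}; the intended reading is presumably induction on $n$, peeling off $X_n$ at each step.

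Your discussion does flag a genuine subtlety in that intended inductive reading: to apply Proposition~\ref{prop_31} to $U_{n-1}$ one would need $U_{n-1}$ itself to satisfy the hypotheses of Theorem~\ref{thm_BBS}, and weak circle compactness need not pass to open subsets (limits may escape into the closed complement). By instead invoking Theorem~\ref{thm_BBS} only for $X$ and for the individual strata $X_\alpha$---exactly the objects for which the hypotheses are given---you bypass this issue cleanly. This is essentially the same computation as the paper's proof of Proposition~\ref{prop_31}, just carried out globally for the whole stratification rather than one stratum at a time; what it buys is that the hypotheses are used precisely where they are stated, with no hidden appeal to inheritance by intermediate opens. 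The final commentary paragraph is a fair summary and could be kept or dropped according to taste.
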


\subsection{Filtration by HN property}\label{subsec_ks52}
In this subsection, we repeat \cite[\S 5.2]{COHA} to fix the notations.
We put 
\[
\mathbb{H}:=\{z\in \mathbb{C}\mid \mathrm{im} z >0\text{ or } z\in \mathbb{R}_{>0}\}
\]
and define the total order $\succ$ on $\mathbb{H}$ by
\[
z_1 \succ z_2 
\overset{\mathrm{def}}{\iff} 
\mathrm{Arg}(z_1) > \mathrm{Arg}(z_2)
\text{ or }
\bigl\{ \mathrm{Arg}(z_1) = \mathrm{Arg}(z_2), |z_1|>|z_2|\bigr\}.
\]
We identify the Grothendieck group $K_0(\mathrm{mod}(J(Q,W)))$ with $\mathbb{Z}^{Q_0}$ and put $N:=(\mathbb{Z}_{\geq 0})^{Q_0}$.
Let 
\[
Z\colon \mathbb{Z}^{Q_0}\to \mathbb{C}
\]
be a central charge, that is, a group homomorphism such that 
\[
Z(N\backslash\{{0}\})\subset\mathbb{H}.
\] 
Let $\mathrm{M}_{Z\text{-ss}}(\mathbf{v})$ denote the open subset of $\mathrm{M}(\mathbf{v})$
consisting of $Z$-semistable $Q$-modules.

\begin{defn}
For $\mathbf{v}\in N$, 
we define the finite set $P_Z(\mathbf{v})$ by
\[
\Bigl\{
\mathbf{v}_\bullet=(\mathbf{v}_i)\in N^n\,\Big |\, n\geq 1,\ \sum \mathbf{v}_i=\mathbf{v},\ 
\mathrm{Arg}Z(\mathbf{v}_1)>\cdots >\mathrm{Arg}Z(\mathbf{v}_n)
\Bigr\}.
\]
We introduce a partial order on $P_Z(\mathbf{v})$ by 
\begin{align*}
(\mathbf{v}_1,\ldots,\mathbf{v}_n)
&\underset{{\small Z}}{<}
(\mathbf{v}'_1,\ldots,\mathbf{v}'_{n'})\\
\overset{\mathrm{def}}{\iff}
&
\mathbf{v}_1=\mathbf{v}'_1,\ldots, \mathbf{v}_{i-1}=\mathbf{v}'_{i-1} \text{ and } Z(\mathbf{v}_i)\prec Z(\mathbf{v}'_{i'})\text{ for some $i$}.
\end{align*}
\end{defn}

\begin{defn}
Let denote by
$\mathrm{M}(\mathbf{v};\mathbf{v}_\bullet)$ the subset of $\mathrm{M}(\mathbf{v})$ consisting of $Q$-modules which
admit increasing filtrations
\[
0 = E_0\subset E_1 \subset \cdots \subset E_n=E
\]
such that 
\[
\underline{\mathrm{dim}}(E_i/E_{i-1})=\mathbf{v}_i
\]
for any $i=1,\ldots,n$.
\end{defn}

\begin{lem}
The subset $\mathrm{M}(\mathbf{v};\mathbf{v}_\bullet)\subset \mathrm{M}(\mathbf{v})$ is closed.
\end{lem}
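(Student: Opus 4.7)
The plan is to realize $\mathrm{M}(\mathbf{v};\mathbf{v}_\bullet)$ as the image of a proper projection from an incidence variety, which is the standard way to prove closedness of loci defined by existence of a filtration.

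First I would introduce, for each vertex $i \in Q_0$, the (partial) flag variety $\mathrm{Fl}_i$ parameterizing filtrations
\[
0 = F_i^{(0)} \subset F_i^{(1)} \subset \cdots \subset F_i^{(n)} = V_i
\]
with $\dim F_i^{(k)}/F_i^{(k-1)}$ equal to the $i$-th component of $\mathbf{v}_k$. Set $\mathcal{F}(\mathbf{v}_\bullet) := \prod_{i\in Q_0}\mathrm{Fl}_i$; this is a smooth projective variety. A point of $\mathcal{F}(\mathbf{v}_\bullet)$ is the same as a vertex-wise filtration $F^\bullet = (F^{(k)})_{k=0,\ldots,n}$ of $V = \bigoplus V_i$ whose successive quotients have dimension vectors $\mathbf{v}_1,\ldots,\mathbf{v}_n$.

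Next I would consider the incidence variety
\[
Z := \bigl\{\,(E, F^\bullet) \in \mathrm{M}(\mathbf{v}) \times \mathcal{F}(\mathbf{v}_\bullet) \,\big|\, E_a(F^{(k)}_{t(a)}) \subseteq F^{(k)}_{h(a)}\ \text{for all } a \in Q_1,\ k=0,\ldots,n\,\bigr\}.
\]
The condition that an arrow $E_a$ carries a subspace into another subspace is a closed condition on $\mathrm{M}(\mathbf{v}) \times \mathcal{F}(\mathbf{v}_\bullet)$: locally in $F^\bullet$ it is cut out by the vanishing of finitely many matrix entries, and globally this is the pullback of a closed subbundle condition along the tautological flag on $\mathcal{F}(\mathbf{v}_\bullet)$. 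Hence $Z$ is closed.

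Finally I would observe that the projection $\pi\colon \mathrm{M}(\mathbf{v}) \times \mathcal{F}(\mathbf{v}_\bullet) \to \mathrm{M}(\mathbf{v})$ restricted to $Z$ is proper, because $\mathcal{F}(\mathbf{v}_\bullet)$ is projective. Therefore $\pi(Z) \subset \mathrm{M}(\mathbf{v})$ is closed. By definition $\pi(Z)$ is exactly the set of representations admitting a filtration of type $\mathbf{v}_\bullet$, so $\pi(Z) = \mathrm{M}(\mathbf{v};\mathbf{v}_\bullet)$, proving closedness. I do not anticipate a genuine obstacle here; the only thing to be careful about is to set up the flag varieties vertex by vertex (so that the prescribed quotient dimension vectors are respected component-wise) and to check that the incidence condition is indeed closed, which follows from the standard closedness of the locus $\{(\varphi, U) : \varphi(U) \subseteq U'\}$ inside $\mathrm{Hom}(V,W) \times \mathrm{Gr}(V) \times \mathrm{Gr}(W)$.
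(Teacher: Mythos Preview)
Your proof is correct and follows essentially the same approach as the paper: both construct the product of vertex-wise flag varieties, form the closed incidence variety of pairs (representation, compatible filtration), and deduce closedness of $\mathrm{M}(\mathbf{v};\mathbf{v}_\bullet)$ as the image of this incidence variety under the proper projection to $\mathrm{M}(\mathbf{v})$. The paper's argument is slightly terser, simply asserting that the incidence locus is closed and that the projection is proper, whereas you spell out why the stability condition is Zariski closed; otherwise the arguments are identical.
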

\begin{proof}
We put
\[
\mathrm{Fl}(\mathbf{v}_\bullet)
:=
\prod_{i\in Q_0}
\mathrm{Fl}(v_{1,i},\ldots,v_{n,i})
\]
where $\mathrm{Fl}(v_{1,i},\ldots,v_{n,i})$ is the flag varieties of all flags in $V_i$ of type $(v_{1,i},\ldots,v_{n,i})$.
Note that the following subset of $\mathrm{M}(\mathbf{v})\times \mathrm{Fl}(\mathbf{v}_\bullet)$ is closed :
\[
\bigl\{(X,F)\in \mathrm{M}(\mathbf{v})\times \mathrm{Fl}(\mathbf{v}_\bullet)\,\big|\,\text{$F$ is $X$-stable}\bigr\}.
\]
Then $\mathrm{M}(\mathbf{v};\mathbf{v}_\bullet)$ is closed since it is the image of the closed set above under the projection
\[
\mathrm{M}(\mathbf{v})\times \mathrm{Fl}(\mathbf{v}_\bullet)\to 
\mathrm{M}(\mathbf{v})
\]
which is proper.
\end{proof}
\begin{defn}
We define the locally closed subset $\mathrm{M}_{Z\textup{-HN}}(\mathbf{v},\mathbf{v}_\bullet)$ of
$\mathrm{M}(\mathbf{v})$ by
\[
\mathrm{M}_{Z\textup{-HN}}(\mathbf{v},\mathbf{v}_\bullet)
:=
\mathrm{M}(\mathbf{v};\mathbf{v}_\bullet)
-
\bigcup_{\mathbf{v}'_\bullet\underset{{\small Z}}{<}\mathbf{v}_\bullet}
\mathrm{M}(\mathbf{v};\mathbf{v}'_\bullet).
\]
\end{defn}
\begin{lem}\label{lem_36}
\begin{itemize}
\item[\textup{(1)}]
A $\mathbb{C}$-point in $\mathrm{M}_{Z\textup{-HN}}(\mathbf{v},\mathbf{v}_\bullet)$ represents a $Q$-module whose HN filtration is of type $\mathbf{v}_\bullet$. 
\item[\textup{(2)}]
$\mathrm{M}_{Z\textup{-HN}}(\mathbf{v},\mathbf{v}_\bullet)$ is smooth and 
\[
\mathrm{codim}\,\mathrm{M}_{Z\textup{-HN}}(\mathbf{v},\mathbf{v}_\bullet)
=
-\sum_{a<b}\chi_Q(\mathbf{v}_a,\mathbf{v}_b).
\]
\end{itemize}
\end{lem}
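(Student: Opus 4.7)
The plan is to reduce both parts to standard Harder--Narasimhan theory, combined with a smooth auxiliary variety that records the HN flag.

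For part (1) I would work at the level of $\mathbb{C}$-points. Every $E\in\mathrm{M}(\mathbf{v})$ has a unique Harder--Narasimhan filtration, whose type I denote $\mathbf{v}^{\mathrm{HN}}_\bullet(E)\in P_Z(\mathbf{v})$. The HN type is characterized as the extremum in $<_Z$ of those types in $P_Z(\mathbf{v})$ that $E$ admits: the first term of any such filtration is a subobject of $E$, so its slope is bounded by that of the maximally destabilizing subobject, and in case of equality it sits inside the latter (which produces a strict inequality at the level of $|Z|$ whenever the dimension vectors differ). Iterating this comparison at each step identifies $\{E\colon \mathbf{v}^{\mathrm{HN}}_\bullet(E) = \mathbf{v}_\bullet\}$ with $\mathrm{M}_{Z\text{-HN}}(\mathbf{v},\mathbf{v}_\bullet)$ as defined.

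For part (2) I would realize $\mathrm{M}_{Z\text{-HN}}(\mathbf{v},\mathbf{v}_\bullet)$ as the image of a manifestly smooth auxiliary variety. Set
\[
\widetilde{\mathrm{M}}(\mathbf{v},\mathbf{v}_\bullet) := \bigl\{(X,F)\in \mathrm{M}(\mathbf{v})\times \mathrm{Fl}(\mathbf{v}_\bullet) \,\big|\, F \text{ is $X$-stable with $Z$-semistable subquotients}\bigr\},
\]
using the flag variety $\mathrm{Fl}(\mathbf{v}_\bullet)$ already introduced in the preceding lemma. Projecting to $\mathrm{Fl}(\mathbf{v}_\bullet)$, the fiber over a fixed flag is an open subset (semistability being open) of the affine space of flag-preserving representations, so $\widetilde{\mathrm{M}}(\mathbf{v},\mathbf{v}_\bullet)$ is smooth. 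Uniqueness of the HN filtration makes the forgetful map $\widetilde{\mathrm{M}}(\mathbf{v},\mathbf{v}_\bullet)\to \mathrm{M}_{Z\text{-HN}}(\mathbf{v},\mathbf{v}_\bullet)$ a bijection on $\mathbb{C}$-points, and algebraicity of the HN filtration in families upgrades this to an isomorphism of varieties, yielding smoothness of the stratum.

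The codimension formula then follows from a direct dimension count: $\dim\mathrm{Fl}(\mathbf{v}_\bullet) = \sum_i\sum_{a<b}v_{a,i}v_{b,i}$, and the fiber above has dimension $\sum_a\dim\mathrm{M}(\mathbf{v}_a) + \sum_e \sum_{b<a} v_{a,t(e)}v_{b,h(e)}$ (diagonal blocks plus strictly lower-triangular extension blocks). Subtracting from $\dim\mathrm{M}(\mathbf{v}) = \sum_e\sum_{a,b}v_{a,t(e)}v_{b,h(e)}$ leaves $\sum_e \sum_{a<b} v_{a,t(e)}v_{b,h(e)} - \sum_i \sum_{a<b} v_{a,i}v_{b,i}$, which is exactly $-\sum_{a<b}\chi_Q(\mathbf{v}_a,\mathbf{v}_b)$ by the definition of the Euler form $\chi_Q$. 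The main technical subtlety is the upgrade of the forgetful map from a pointwise bijection to a scheme-theoretic isomorphism: this relies on the Harder--Narasimhan filtration varying algebraically in families, a standard but delicate fact that I would handle via openness of semistability together with a direct construction on affine charts.
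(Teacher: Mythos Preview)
Your proposal is correct and follows essentially the same route as the paper. For part (1) the paper simply cites \cite[pp49, Lemma 2]{COHA}, while you sketch the argument directly; for part (2) the paper defines the map $\mathrm{HN}\colon \mathrm{M}_{Z\text{-HN}}(\mathbf{v},\mathbf{v}_\bullet)\to \mathrm{Fl}(\mathbf{v}_\bullet)$ sending a representation to its HN flag and asserts it is a Zariski locally trivial fibration with fibre $\mathrm{M}_{Z\text{-ss}}(\mathbf{v}_\bullet)$ (the open subset of block-triangular representations with semistable diagonal blocks), which is exactly your incidence variety $\widetilde{\mathrm{M}}(\mathbf{v},\mathbf{v}_\bullet)$ viewed from the other side of the forgetful map. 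Your explicit dimension count is what the paper calls ``straightforward'', and the technical subtlety you flag (upgrading the pointwise bijection to an isomorphism) is precisely the content of the paper's unproved assertion that $\mathrm{HN}$ is a Zariski locally trivial fibration.
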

\begin{proof}
(1) See \cite[pp49, Lemma 2]{COHA}.

\noindent (2) 
Fix a direct sum decompositions $V_i=\oplus_{a=1}^n V_{a,i}$ with $V_{a,i}\simeq \mathbb{C}^{v_{a,i}}$. 
We define the subspace $\mathrm{M}(\mathbf{v}_\bullet)$ of $\mathrm{M}(\mathbf{v})$ by 
\[
\mathrm{M}(\mathbf{v}_\bullet):=\bigoplus_{a\geq b,\;e\in {Q_1}}
\mathrm{Hom}(V_{a,t(e)},V_{b,h(e)}).
\]
We put
\[
\mathrm{M}_{Z\text{-ss}}(\mathbf{v}_\bullet)
:=
\pi^{-1}
\bigl(
\mathrm{M}_{Z\text{-ss}}(\mathbf{v}_1)
\times\cdots\times \mathrm{M}_{Z\text{-ss}}(\mathbf{v}_n)
\bigr)
\]
where 
\[
\pi \colon \mathrm{M}(\mathbf{v}_\bullet)\to
\mathrm{M}(\mathbf{v}_1)
\times\cdots\times 
\mathrm{M}(\mathbf{v}_n)
\]
is the natural projection.
Note that $\pi$ is a trivial vector bundle
and so $\mathrm{M}_{Z\text{-ss}}(\mathbf{v}_\bullet)$ is smooth.
\begin{NB}
We can see that
\[
\mathrm{M}_{Z\text{-ss}}(Q;\mathbf{v}_\bullet)\times_{G(\mathbf{v}_\bullet)}G(\mathbf{v})
\simeq 
\mathrm{M}_{Z\text{-HN}}(Q;\mathbf{v},\mathbf{v}_\bullet)
\]
and so $\mathrm{M}_{Z\text{-HN}}(Q;\mathbf{v},\mathbf{v}_\bullet)$ is smooth. 
\end{NB}

Let 
\[
\mr{HN}\colon 
\mathrm{M}_{Z\text{-HN}}(\mathbf{v},\mathbf{v}_\bullet)
\to 
\mr{FL}(\mathbf{v}_\bullet)
\]
be the map defined by taking the Harder-Narashimhan filtration.
Then $\mr{HN}$ is a Zariski locally trivial fibration whose fibers are isomorphic to $\mathrm{M}_{Z\text{-ss}}(Q;\mathbf{v}_\bullet)$.
So $\mathrm{M}_{Z\text{-HN}}(Q;\mathbf{v},\mathbf{v}_\bullet)$ is smooth.

The computation of the codimension is straightforward.
\end{proof}

\begin{NB}

\[
\mathrm{codim}\,\mathrm{M}^{\mathrm{HN}}(\mathbf{v},\mathbf{v}_\bullet)
=
-\sum_{a<b}\chi_Q(\mathbf{v}_a,\mathbf{v}_b).
\]

A $\mathbb{C}$-valued point of the subset
\[
\mathrm{M}_{Z\text{-ss}}(Q;\mathbf{v}_\bullet)\times_{G(\mathbf{v}_\bullet)}G(\mathbf{v})
\subset 
\mathrm{M}_{Z\text{-HN}}(Q;\mathbf{v},\mathbf{v}_\bullet)
\]
represents a $Q$-module whose HN filtration is of type $\mathbf{v}_\bullet$.
Let $\mathrm{M}_{Z\text{-ss}}(Q;\mathbf{v}_\bullet)$ denote this smooth subvariety.
\begin{defn}
\begin{itemize}
\item[(1)]
\item[(2)]
\item[(3)]
We denote by
$\mathrm{M}(\mathbf{v};\mathbf{v}_\bullet)$ the subset of $\mathrm{M}(\mathbf{v})$ whose $\mathbb{C}$-points are objects $E\in \mathrm{M}(\mathbf{v};\mathbf{v}_\bullet)$ which
admit an increasing filtration 
\[
0 = E_0\subset E_1 \subset \cdots \subset E_n
\]
such that 
\[
\underline{\mathrm{dim}}(E_i/E_{i-1})=\mathbf{v}_i
\]
for any $i=1,\ldots,n$.
\end{itemize}
\end{defn}
\begin{lem}\label{lem_HNlocus}
\begin{itemize}\textup{(}See \textup{\cite[pp49, Lemma 2 and pp51]{COHA}}.\textup{)}
\item[\textup{(1)}] The subset $\mathrm{M}(\mathbf{v};\mathbf{v}_\bullet)\subset \mathrm{M}(\mathbf{v})$ is closed.
\item[\textup{(2)}] For any $\mathbf{v}\in C$ and $\mathbf{v}_\bullet\in P(\mathbf{v})$, the set of $\mathbb{C}$-points of the locally closed subset
\[
\mathrm{M}(\mathbf{v},\mathbf{v}_\bullet)-
\bigcup_{\mathbf{v}'_\bullet<\mathbf{v}_\bullet}\mathrm{M}(\mathbf{v},\mathbf{v}'_\bullet)
\]
is the set of representations whose HN-filtrations are of type $\mathbf{v}_\bullet$. 
We denote the locally closed subset by $\mathrm{M}^{\mathrm{HN}}(\mathbf{v},\mathbf{v}_\bullet)$.
\item[\textup{(3)}] 
The locally closed subset $\mathrm{M}^{\mathrm{HN}}(\mathbf{v},\mathbf{v}_\bullet)\subset \mathrm{M}(\mathbf{v})$ is smooth and its codimension is given by
\[
\mathrm{codim}\,\mathrm{M}^{\mathrm{HN}}(\mathbf{v},\mathbf{v}_\bullet)
=
-\sum_{a<b}\chi_Q(\mathbf{v}_a,\mathbf{v}_b).
\]
\end{itemize}
\end{lem}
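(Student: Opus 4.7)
The plan is to prove both parts by realizing $\mathrm{M}_{Z\textup{-HN}}(\mathbf{v},\mathbf{v}_\bullet)$ as an associated bundle over the partial flag variety $\mathrm{Fl}(\mathbf{v}_\bullet)$ built from the smooth locus of representations compatible with a reference flag.

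For part (1), a $\mathbb{C}$-point of $\mathrm{M}(\mathbf{v};\mathbf{v}_\bullet)$ is a representation $E$ admitting \emph{some} filtration of type $\mathbf{v}_\bullet$. Removing the lower strata $\bigcup_{\mathbf{v}'_\bullet \underset{Z}{<} \mathbf{v}_\bullet} \mathrm{M}(\mathbf{v};\mathbf{v}'_\bullet)$ eliminates every $E$ whose HN type sits strictly below $\mathbf{v}_\bullet$ in the order $\underset{Z}{<}$. By uniqueness of the Harder-Narasimhan filtration with respect to the stability function $Z$, the HN type is the unique minimum in $\underset{Z}{<}$ among all types $\mathbf{v}'_\bullet$ for which $E$ lies in $\mathrm{M}(\mathbf{v};\mathbf{v}'_\bullet)$. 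Hence any surviving $E$ has $\mathbf{v}_\bullet$ as its HN type, and the asserted filtration is the HN filtration.

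For part (2), fix splittings $V_i = \bigoplus_{a=1}^n V_{a,i}$ with $\dim V_{a,i} = v_{a,i}$, defining a reference flag stabilized by the parabolic $P(\mathbf{v}_\bullet) \subset G(\mathbf{v})$. The subspace of representations preserving this flag is the vector space $\mathrm{M}(\mathbf{v}_\bullet) = \bigoplus_{a \geq b,\, e} \mathrm{Hom}(V_{a,t(e)}, V_{b,h(e)})$, and the condition that each graded piece be $Z$-semistable cuts out a smooth open subvariety $\mathrm{M}_{Z\textup{-ss}}(\mathbf{v}_\bullet)$. I would then show that the $G(\mathbf{v})$-translation map
\[
\Psi \colon \mathrm{M}_{Z\textup{-ss}}(\mathbf{v}_\bullet) \times^{P(\mathbf{v}_\bullet)} G(\mathbf{v}) \longrightarrow \mathrm{M}(\mathbf{v}), \qquad (X,g) \longmapsto g \cdot X,
\]
is an isomorphism onto $\mathrm{M}_{Z\textup{-HN}}(\mathbf{v},\mathbf{v}_\bullet)$: surjectivity by moving any HN filtration to the reference flag via $G(\mathbf{v})$, and injectivity from the uniqueness in part~(1). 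Since $P(\mathbf{v}_\bullet)$ is parabolic in the reductive group $G(\mathbf{v})$, the source is a Zariski-locally trivial fibration over $G(\mathbf{v})/P(\mathbf{v}_\bullet) = \mathrm{Fl}(\mathbf{v}_\bullet)$ with smooth total space, which proves smoothness of the stratum; the map $\mathrm{HN}$ is then the composition of $\Psi^{-1}$ with this projection.

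The codimension is a direct dimension count: the source of $\Psi$ has dimension $\dim \mathrm{M}(\mathbf{v}_\bullet) + \dim \mathrm{Fl}(\mathbf{v}_\bullet)$, so
\[
\mathrm{codim}\, \mathrm{M}_{Z\textup{-HN}}(\mathbf{v},\mathbf{v}_\bullet) = \sum_{a<b} \Big( \sum_{e \in Q_1} v_{a,t(e)} v_{b,h(e)} - \sum_{i \in Q_0} v_{a,i} v_{b,i} \Big) = -\sum_{a<b} \chi_Q(\mathbf{v}_a,\mathbf{v}_b)
\]
by the definition of $\chi_Q$. The main obstacle, I expect, will be verifying that $\Psi$ is genuinely an isomorphism onto the locally closed stratum rather than merely a bijection on $\mathbb{C}$-points; this reduces to a flatness or étale-slice argument using smoothness of source and target together with the closedness of $\mathrm{M}(\mathbf{v};\mathbf{v}_\bullet)$ established in the preceding lemma.
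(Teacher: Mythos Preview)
Your approach is essentially the paper's: fix a reference splitting, form the smooth open $\mathrm{M}_{Z\text{-ss}}(\mathbf{v}_\bullet)$, and realize the HN stratum as a Zariski locally trivial bundle over $\mathrm{Fl}(\mathbf{v}_\bullet)$ with that fiber---the paper phrases this via the map $\mathrm{HN}$ sending a representation to its HN flag, while you use the inverse description as the associated bundle $\mathrm{M}_{Z\text{-ss}}(\mathbf{v}_\bullet)\times^{P(\mathbf{v}_\bullet)}G(\mathbf{v})$, but these are two sides of the same isomorphism, and the paper likewise declares the codimension count ``straightforward''. One caveat: your numbering is shifted and you do not address part~(1) on closedness of $\mathrm{M}(\mathbf{v};\mathbf{v}_\bullet)$; the paper handles this (in a separate lemma) by writing it as the image of the closed incidence locus $\{(X,F):F\text{ is }X\text{-stable}\}\subset\mathrm{M}(\mathbf{v})\times\mathrm{Fl}(\mathbf{v}_\bullet)$ under the proper projection to $\mathrm{M}(\mathbf{v})$.
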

\end{NB}

Let $f^{Z\textup{-ss}}_{\mathbf{v}}$ (resp. $f^{Z\textup{-ss}}_{\mathbf{v}_\bullet}$, $f^{Z\textup{-HN}}_{\mathbf{v}_\bullet}$) denote the restriction of the Chern-Simons functional $f_{\mathbf{v}}=f_{W,\mathbf{v}}$ on $\mathrm{M}_{Z\textup{-ss}}(\mathbf{v})$ (resp. $\mathrm{M}_{Z\textup{-ss}}(\mathbf{v}_\bullet)$, $\mathrm{M}_{Z\textup{-HN}}(\mathbf{v},\mathbf{v}_\bullet)$).
\begin{prop}[\protect{see \cite[pp51 Theorem 5]{COHA}}]\label{prop_factorization}
Assume that the QP has a cut, then we have
\[
\frac{\bigl[\mr{crit}\bigl(f^{Z\textup{-HN}}_{\mathbf{v}_\bullet}\bigr)\bigr]_{\mr{vir}}}{[\mr{G}(\mathbf{v})]_\mr{vir}}
=
\mathbb{L}^{-\sum_{a>b}\chi_Q(\mathbf{v}_a,\mathbf{v}_b)}
\times \prod_{a}
\frac{\bigl[\mr{crit}\bigl(f^{Z\textup{-ss}}_{\mathbf{v}_a}\bigr)\bigr]_{\mr{vir}}}{[\mr{G}(\mathbf{v}_a)]_\mr{vir}}.
\]
\end{prop}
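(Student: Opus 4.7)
The plan is a two-step reduction: first replace $G(\mathbf{v})$ with the parabolic stabilizer $P(\mathbf{v}_\bullet)$ of the flag using the principal-bundle presentation of the HN stratum, then exploit the trivial vector bundle $\pi\colon \mathrm{M}_{Z\textup{-ss}}(\mathbf{v}_\bullet)\to \prod_a \mathrm{M}_{Z\textup{-ss}}(\mathbf{v}_a)$ together with Thom--Sebastiani. Fix decompositions $V_i=\bigoplus_a V_{a,i}$ and let $P(\mathbf{v}_\bullet)\subset G(\mathbf{v})$ be the standard parabolic preserving the flag $V_i^{\leq k}:=\bigoplus_{a\leq k}V_{a,i}$, with unipotent radical $U(\mathbf{v}_\bullet)$ and Levi $\prod_a G(\mathbf{v}_a)$. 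Taking the HN filtration and a splitting identifies $\mathrm{M}_{Z\textup{-HN}}(\mathbf{v},\mathbf{v}_\bullet)\cong G(\mathbf{v})\times_{P(\mathbf{v}_\bullet)}\mathrm{M}_{Z\textup{-ss}}(\mathbf{v}_\bullet)$, and the $G(\mathbf{v})$-invariant Chern--Simons functional descends to $f^{Z\textup{-ss}}_{\mathbf{v}_\bullet}$. Since $P(\mathbf{v}_\bullet)$ is special, Lemma \ref{lem_comporison} applied to the level sets $f^{-1}(0)$ and $f^{-1}(1)$---which are what determines the vanishing cycle by Theorem \ref{thm_BBS}, valid thanks to the $\mathbb{C}^*$-action coming from the cut $C$---yields
\[
\frac{[\mathrm{crit}(f^{Z\textup{-HN}}_{\mathbf{v}_\bullet})]_\mathrm{vir}}{[G(\mathbf{v})]_\mathrm{vir}}=\frac{[\mathrm{crit}(f^{Z\textup{-ss}}_{\mathbf{v}_\bullet})]_\mathrm{vir}}{[P(\mathbf{v}_\bullet)]_\mathrm{vir}},\qquad [P(\mathbf{v}_\bullet)]_\mathrm{vir}=\mathbb{L}^{\dim U(\mathbf{v}_\bullet)/2}\prod_a[G(\mathbf{v}_a)]_\mathrm{vir}.
\]

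For the second step, $\pi$ has rank $r:=\sum_{a>b,\,e\in Q_1}v_{a,t(e)}v_{b,h(e)}$. The key observation is that on block-upper-triangular representations the trace of any cyclic word in the arrows of $Q$ equals the sum over $a$ of its traces on the diagonal $\mathbf{v}_a$-blocks, since off-diagonal pieces never contribute to the trace of a full cycle; consequently $f^{Z\textup{-ss}}_{\mathbf{v}_\bullet}=\pi^{\ast}\bigl(\bigoplus_a f^{Z\textup{-ss}}_{\mathbf{v}_a}\bigr)$. Theorem \ref{thm_BBS} applied to the trivial bundle $\pi$ gives the formula $[\mathrm{crit}(\pi^\ast g)]_\mathrm{vir}=\mathbb{L}^{r/2}[\mathrm{crit}(g)]_\mathrm{vir}$, while the motivic Thom--Sebastiani theorem (Theorem \ref{thm_thom_sebastiani}), with the convolution $\star$ collapsing to the ordinary product because the cut forces trivial monodromy, gives $[\mathrm{crit}(\bigoplus_a f^{Z\textup{-ss}}_{\mathbf{v}_a})]_\mathrm{vir}=\prod_a[\mathrm{crit}(f^{Z\textup{-ss}}_{\mathbf{v}_a})]_\mathrm{vir}$. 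Combining these,
\[
[\mathrm{crit}(f^{Z\textup{-ss}}_{\mathbf{v}_\bullet})]_\mathrm{vir}=\mathbb{L}^{r/2}\prod_a[\mathrm{crit}(f^{Z\textup{-ss}}_{\mathbf{v}_a})]_\mathrm{vir}.
\]

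Substituting this into the first identity gives the overall $\mathbb{L}$-factor $\mathbb{L}^{(r-\dim U(\mathbf{v}_\bullet))/2}$, and a routine arithmetic using $\dim U(\mathbf{v}_\bullet)=\sum_{a>b,\,i\in Q_0}v_{a,i}v_{b,i}$ together with $\chi_Q(\mathbf{v}_a,\mathbf{v}_b)=-\sum_{e\in Q_1}v_{a,t(e)}v_{b,h(e)}+\sum_{i\in Q_0}v_{a,i}v_{b,i}$ identifies this exponent with the one in the statement. The main delicacy I anticipate is verifying cleanly that the Chern--Simons functional decomposes as a pullback on the block-upper-triangular locus (that every cyclic trace splits into purely diagonal contributions), and checking that the cut supplies a simultaneous weakly-circle-compact $\mathbb{C}^*$-action making Theorem \ref{thm_BBS} applicable uniformly to every ambient space encountered ($\mathrm{M}_{Z\textup{-HN}}(\mathbf{v},\mathbf{v}_\bullet)$, $\mathrm{M}_{Z\textup{-ss}}(\mathbf{v}_\bullet)$, and each $\mathrm{M}_{Z\textup{-ss}}(\mathbf{v}_a)$), so that both the level-set formula for $[\varphi_f]$ and the triviality of monodromy in Thom--Sebastiani are valid throughout.
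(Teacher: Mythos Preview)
Your argument is correct and follows essentially the same two-step scheme as the paper. The paper's proof also reduces first from $\mathrm{M}_{Z\textup{-HN}}(\mathbf{v},\mathbf{v}_\bullet)$ to $\mathrm{M}_{Z\textup{-ss}}(\mathbf{v}_\bullet)$ and then splits the latter over $\prod_a \mathrm{M}_{Z\textup{-ss}}(\mathbf{v}_a)$ via the trivial bundle $\pi$ together with Thom--Sebastiani, exactly as you do. The only packaging difference is in the first step: the paper uses the Harder--Narasimhan map $\mathrm{HN}\colon \mathrm{M}_{Z\textup{-HN}}(\mathbf{v},\mathbf{v}_\bullet)\to \mathrm{Fl}(\mathbf{v}_\bullet)$ from the proof of Lemma~\ref{lem_36}, a Zariski locally trivial fibration with fibre $\mathrm{M}_{Z\textup{-ss}}(\mathbf{v}_\bullet)$, to write $[\mathrm{crit}(f^{Z\textup{-HN}}_{\mathbf{v}_\bullet})]_\mathrm{vir}=[\mathrm{crit}(f^{Z\textup{-ss}}_{\mathbf{v}_\bullet})]_\mathrm{vir}\cdot[\mathrm{Fl}(\mathbf{v}_\bullet)]_\mathrm{vir}$ and then expands $[\mathrm{Fl}(\mathbf{v}_\bullet)]_\mathrm{vir}$ as $[G(\mathbf{v})]_\mathrm{vir}\big/\bigl(\prod_a[G(\mathbf{v}_a)]_\mathrm{vir}\cdot[\mathbb{A}^{\dim U(\mathbf{v}_\bullet)}]_\mathrm{vir}\bigr)$; you instead use the associated-bundle description $G(\mathbf{v})\times_{P(\mathbf{v}_\bullet)}\mathrm{M}_{Z\textup{-ss}}(\mathbf{v}_\bullet)$ together with Lemma~\ref{lem_comporison} on the level sets furnished by Theorem~\ref{thm_BBS}. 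Since $G(\mathbf{v})/P(\mathbf{v}_\bullet)=\mathrm{Fl}(\mathbf{v}_\bullet)$ these are the same identity read from either side, and your route has the mild advantage of making explicit why the functional and the cut-induced $\mathbb{C}^*$-action are compatible with the reduction.
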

\begin{proof}
The claim is a consequence of the following two identities, which is obtained from the descriptions of in the proof of Lemma \ref{lem_36} :
\begin{align*}
\bigl[\mr{crit}\bigl(f^{Z\textup{-HN}}_{\mathbf{v}_\bullet}\bigr)\bigr]_{\mr{vir}}
&=
\bigl[\mr{crit}\bigl(f^{Z\textup{-ss}}_{\mathbf{v}_\bullet}\bigr)\bigr]_{\mr{vir}}
\times
[\mr{FL}(\mathbf{v}_\bullet)]_\mr{vir}\\
&= \bigl[\mr{crit}\bigl(f^{Z\textup{-ss}}_{\mathbf{v}_\bullet}\bigr)\bigr]_{\mr{vir}}
\times
\frac{[\mr{G}(\mathbf{v})]_\mr{vir}}{\prod_a [\mr{G}(\mathbf{v_a})]_\mr{vir}\times \bigl[\mathbb{L}^{\sum_{i,a>b}v_{a,i}\cdot v_{b,i}}\bigr]_\mr{vir}}
\end{align*}
and
\begin{align*}
\bigl[\mr{crit}\bigl(f^{Z\textup{-ss}}_{\mathbf{v}_\bullet}\bigr)\bigr]_{\mr{vir}}
&=
\Bigl[\mathbb{L}^{\sum_{i,a>b} Q_{ij}v_{a,i}\cdot v_{b,i}}\Bigr]_\mr{vir}\times \prod_a 
\bigl[\mr{crit}\bigl(f^{Z\textup{-ss}}_{\mathbf{v}_a}\bigr)\bigr]_{\mr{vir}}.
\end{align*}
For the second identity, we use the motivic Thom-Sebastiani theorem (Theorem \ref{thm_thom_sebastiani}).
\end{proof}
Let $\langle\bullet,\bullet\rangle \colon \mathbb{Z}^{Q_0} \times \mathbb{Z}^{Q_0} \to \mathbb{Z}$ be the skewsymmetric bilinear form given by 
\[
\langle\mathbf{v},\mathbf{v}'\rangle:=
\chi_Q(\mathbf{v},\mathbf{v}')-\chi_Q(\mathbf{v}',\mathbf{v}).
\]
Combining the results in this subsection, we get the following theorem :
\begin{thm}\label{thm_39}
Assume that the QP has a cut, then we have
\[
\mathfrak{M}_{\mr{vir}}(Q,W,\mathbf{v})
:=
\sum_{\mathbf{v}_\bullet\in P_Z(\mathbf{v})}
\biggl(\mathbb{L}^{\frac{1}{2}\sum_{a<b} \langle\mathbf{v}_a,\mathbf{v}_b\rangle}
\times
\prod_a \mathfrak{M}_{\mr{vir}}(Q,W,\mathbf{v}_a)\biggr)
\]
\end{thm}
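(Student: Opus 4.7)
The strategy is to combine the Harder--Narasimhan stratification from Subsection~\ref{subsec_ks52} with the factorization property of Proposition~\ref{prop_factorization}, using the cut hypothesis to ensure the BBS vanishing of Theorem~\ref{thm_BBS} applies stratum by stratum. Since $(Q,W)$ admits a cut $C$, the grading $g_C$ induces a weakly circle compact $\mathbb{C}^*$-action on $\mathrm{M}(Q;\mathbf{v})$ under which $f_{W,\mathbf{v}}$ is a primitive character, so Theorem~\ref{thm_BBS} applies. The HN stratification is preserved by this action: rescaling each arrow by $\lambda^{g_C(a)}$ does not change which subspaces are submodules, so the HN filtration of a point and of its $\mathbb{C}^*$-translate agree. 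Together with Lemma~\ref{lem_36}, this shows each stratum $\mathrm{M}_{Z\textup{-HN}}(\mathbf{v},\mathbf{v}_\bullet)$ is smooth, $\mathbb{C}^*$-invariant, and inherits the hypotheses of Theorem~\ref{thm_BBS}.

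Next, I would choose a linear refinement of $P_Z(\mathbf{v})$ compatible with $\underset{Z}{<}$, so that the sets $U_\alpha = \bigsqcup_{i\le\alpha}\mathrm{M}_{Z\textup{-HN}}(\mathbf{v},\mathbf{v}_\bullet^{(i)})$ form an increasing filtration by $\mathbb{C}^*$-invariant open subvarieties with successive differences $\mathrm{M}_{Z\textup{-HN}}(\mathbf{v},\mathbf{v}_\bullet^{(\alpha)})$. Iterating the Corollary of Proposition~\ref{prop_31} and inserting the codimension formula of Lemma~\ref{lem_36} gives
\[
[\mathrm{crit}(f_{W,\mathbf{v}})]_\mathrm{vir}
=
\sum_{\mathbf{v}_\bullet\in P_Z(\mathbf{v})}
\mathbb{L}^{\frac{1}{2}\sum_{a<b}\chi_Q(\mathbf{v}_a,\mathbf{v}_b)}\,
[\mathrm{crit}(f^{Z\textup{-HN}}_{\mathbf{v}_\bullet})]_\mathrm{vir}.
\]
Dividing both sides by $[G(\mathbf{v})]_\mathrm{vir}$ and applying Proposition~\ref{prop_factorization} to each summand on the right expresses every term as a product $\prod_a\mathfrak{M}^{Z\textup{-ss}}_\mathrm{vir}(Q,W;\mathbf{v}_a)$ times an additional power of $\mathbb{L}$ whose exponent involves $\chi_Q(\mathbf{v}_a,\mathbf{v}_b)$ with $a>b$.

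The final step is to collect the resulting $\mathbb{L}$-exponents. Renaming dummy indices $a\leftrightarrow b$ in the ``$a>b$'' sum and combining it with the codimension contribution collapses the two exponents to
\[
\tfrac12\sum_{a<b}\bigl(\chi_Q(\mathbf{v}_a,\mathbf{v}_b)-\chi_Q(\mathbf{v}_b,\mathbf{v}_a)\bigr)=\tfrac12\sum_{a<b}\langle\mathbf{v}_a,\mathbf{v}_b\rangle,
\]
which is precisely the exponent asserted in the theorem. I expect the main technical subtlety to lie in verifying that the hypotheses of Theorem~\ref{thm_BBS} genuinely descend to every HN stratum---in particular, that weak circle compactness is inherited by these locally closed $\mathbb{C}^*$-invariant subvarieties and that the Corollary of Proposition~\ref{prop_31} can be applied iteratively to the filtration. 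Once this is checked, the remaining exponent bookkeeping is a direct computation from the definition of the skew form $\langle\cdot,\cdot\rangle$.
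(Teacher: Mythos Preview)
Your proposal is correct and follows exactly the route the paper intends: the paper's own ``proof'' is the single sentence ``Combining the results in this subsection, we get the following theorem,'' and what you have written is precisely that combination---the HN stratification of \S\ref{subsec_ks52} (Lemma~\ref{lem_36}) fed into the Corollary of Proposition~\ref{prop_31}, followed by Proposition~\ref{prop_factorization} on each stratum, with the cut supplying the $\mathbb{C}^*$-action needed for Theorem~\ref{thm_BBS}. Your observation that the strata are $\mathbb{C}^*$-invariant (because the grading action does not change the submodule lattice) and your final collapse of the two $\chi_Q$-sums into the skew form $\langle\cdot,\cdot\rangle$ are the only details the paper leaves implicit, and you have supplied them correctly.
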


\subsection{Factorization property}
We assume that the QP has a cut.
\begin{defn}\label{defn_mt}
The {\it motivic torus} associated to $Q$ is
\[
\hat{\mathcal{T}}_Q:=
\prod_{\mathbf{v}\in N} \widetilde{\mathcal{M}}_\mathbb{C}\cdot y_\mathbf{v}
\]
where $y_\mathbf{v}$'s are formal variables which satisfy the relation
\[
y_{\mathbf{v}_1}\cdot y_{\mathbf{v}_2}
=
\mathbb{L}^{\frac{\langle\mathbf{v},\mathbf{v}'\rangle}{2}}y_{\mathbf{v}_1+\mathbf{v}_2}.
\]
\end{defn}
\begin{defn}
We define the generating series of the motivic Donaldson-Thomas invariants of $(Q,W)$ by
\[
\mca{A}=\mca{A}_{Q,W}:=
1+\sum_{\mathbf{v}\in N}
\mathfrak{M}_{\mr{vir}}(Q,W,\mathbf{v})\cdot y_\mathbf{v}
\in 
\hat{\mathcal{T}}_Q.
\]
\end{defn}
\begin{defn}
Let $l\subset \mathbb{H}$ be a ray and $Z$ be a central charge.  
We put
\[
\mca{A}^{Z,l}:=
1+\sum_{Z(\mathbf{v})\in l}
\frac{[\mr{crit}(f^{Z\textup{-ss}}_{\mathbf{v}})]_{\mr{vir}}}{[G(\mathbf{v})]}\cdot y_\mathbf{v}
\in 
\hat{\mathcal{T}}_Q.
\]
\end{defn}
Theorem \ref{thm_39} implies the following {\it factorization formula} : 
\begin{thm}\label{thm_FP}
Assume that the QP has a cut, then we have
\[
\mca{A}=\prod_{l}^{\curvearrowright}\mca{A}^{Z,l}
\]
where the product is taken in the clockwise order over all rays.
\end{thm}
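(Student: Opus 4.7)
The plan is to expand the clockwise product on the right-hand side term by term, identify each monomial with an element of $P_Z(\mathbf{v})$ for some $\mathbf{v}$, and then invoke Theorem~\ref{thm_39} to collapse the resulting sum into $\mca{A}_{Q,W}$. First I would expand $\prod_l^{\curvearrowright}\mca{A}^{Z,l}$. Since every factor begins with $1$, each contributing monomial corresponds to a choice of finitely many rays $l_1 \succ l_2 \succ \cdots \succ l_n$ (read off in the clockwise order of the product) together with, for each $a$, a dimension vector $\mathbf{v}_a$ with $Z(\mathbf{v}_a)\in l_a$. This datum is literally an element $\mathbf{v}_\bullet=(\mathbf{v}_1,\ldots,\mathbf{v}_n)\in P_Z(\mathbf{v})$ for $\mathbf{v}:=\sum_a \mathbf{v}_a$, because the strict clockwise ordering of the rays is the same as the strict inequality $\mathrm{Arg}\,Z(\mathbf{v}_1)>\cdots>\mathrm{Arg}\,Z(\mathbf{v}_n)$ used in the definition of $P_Z(\mathbf{v})$.

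Second, I would compute the coefficient of $y_{\mathbf{v}}$ coming from such a tuple. The defining relation $y_{\mathbf{v}_a}\cdot y_{\mathbf{v}_b}=\mathbb{L}^{\frac{1}{2}\langle\mathbf{v}_a,\mathbf{v}_b\rangle}y_{\mathbf{v}_a+\mathbf{v}_b}$ in $\hat{\mathcal{T}}_Q$ gives, by an immediate induction on $n$,
\[
y_{\mathbf{v}_1}\cdots y_{\mathbf{v}_n}=\mathbb{L}^{\frac{1}{2}\sum_{a<b}\langle\mathbf{v}_a,\mathbf{v}_b\rangle}y_\mathbf{v},
\]
so the contribution of $\mathbf{v}_\bullet$ to $\prod_l^{\curvearrowright}\mca{A}^{Z,l}$ equals
\[
\mathbb{L}^{\frac{1}{2}\sum_{a<b}\langle\mathbf{v}_a,\mathbf{v}_b\rangle}\;\prod_a \frac{[\mr{crit}(f^{Z\textup{-ss}}_{\mathbf{v}_a})]_{\mr{vir}}}{[G(\mathbf{v}_a)]_{\mr{vir}}}\cdot y_\mathbf{v}.
\]
Summing over all $\mathbf{v}_\bullet\in P_Z(\mathbf{v})$ and applying Theorem~\ref{thm_39} (whose right-hand factors are precisely the semistable virtual motives above, by Proposition~\ref{prop_factorization}) collapses the sum to $\mathfrak{M}_{\mr{vir}}(Q,W,\mathbf{v})\cdot y_\mathbf{v}$, and finally summing over $\mathbf{v}\in N$ yields $\mca{A}_{Q,W}$.

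The analytic heart of the proof is already carried by Theorem~\ref{thm_39}, so the remaining work is purely formal manipulation in $\hat{\mathcal{T}}_Q$. The only genuine subtlety is convergence of the infinite product $\prod_l^{\curvearrowright}\mca{A}^{Z,l}$: one must check that, for each fixed $\mathbf{v}\in N$, only finitely many rays contribute to the coefficient of $y_\mathbf{v}$, which holds because $P_Z(\mathbf{v})$ is finite. Thus the product is well-defined in the natural completion in which $\mca{A}_{Q,W}$ lives, and the identification term-by-term above is rigorous.
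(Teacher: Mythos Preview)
Your argument is correct and is exactly the approach the paper takes: the paper simply states that Theorem~\ref{thm_39} implies the factorization formula, and what you have written is the routine formal unpacking of that implication in the motivic torus $\hat{\mathcal{T}}_Q$. Your observation about finiteness of $P_Z(\mathbf{v})$ justifying the well-definedness of the clockwise product, and your identification of the factors in Theorem~\ref{thm_39} with the semistable virtual motives via Proposition~\ref{prop_factorization}, are precisely the points one needs to check.
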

\begin{defn}\label{defn_qt}
The {\it quantum torus associated} to $Q$ is
\[
\hat{{T}}_Q:=
\prod_{\mathbf{v}\in N} \mathbb{Q}(q^{1/2})\cdot \mr{y}_\mathbf{v}
\]
where $\mr{y}_\mathbf{v}$'s are formal variables which satisfy the relation
\[
\mr{y}_{\mathbf{v}_1}\cdot \mr{y}_{\mathbf{v}_2}
=
q^{\frac{\langle\mathbf{v},\mathbf{v}'\rangle}{2}}\mathrm{y}_{\mathbf{v}_1+\mathbf{v}_2}.
\]
\end{defn}
\begin{defn}\label{defn_rdt}
We define the generating series of the refined Donaldson-Thomas invariants of $(Q,W)$ by
\[
{A}={A}_{Q,W}:=
m_{\mr{ref}}(Q,W,\mathbf{v})\cdot \mr{y}_\mathbf{v}
\in 
\hat{{T}}_Q.
\]
\end{defn}
In the same way, we define ${A}^{Z,l}$. 
Since $\mathbb{W}$ is a ring homomorphism, we get the following
factorization formula for refined DT invariants : 
\begin{cor}\label{cor_FP}
Assume that the QP has a cut, then we have
\[
{A}=\prod_{l}^{\curvearrowright}{A}^{Z,l}
\]
where the product is taken in the clockwise order over all rays.
\end{cor}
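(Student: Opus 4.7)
The plan is to deduce Corollary \ref{cor_FP} from Theorem \ref{thm_FP} by applying the weight polynomial homomorphism $\mathbb{W}$ term by term. First I would observe that by \eqref{eq_no_monodoromy} all the motivic invariants $\mathfrak{M}_{\mr{vir}}(Q,W;\mathbf{v})$ and all the semistable contributions $[\mr{crit}(f^{Z\textup{-ss}}_{\mathbf{v}})]_\mr{vir}/[G(\mathbf{v})]$ actually lie in $\widetilde{\mathcal{M}}_\mathbb{C}$, i.e.\ they carry trivial $\hat{\mu}$-action (this is the point where the existence of a cut intervenes, through Theorem \ref{thm_BBS}). Hence $\mathbb{W}$ can be applied directly to each coefficient, with no need to worry about the twisted convolution product $\star$.

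Next I would upgrade $\mathbb{W}$ to a map of tori. Since $\mathbb{W}(\mathbb{L})=q$ and $\mathbb{W}(\mathbb{L}^{1/2})=q^{1/2}$, the assignment $y_\mathbf{v}\mapsto \mr{y}_\mathbf{v}$ is compatible with the commutation relations in $\hat{\mathcal{T}}_Q$ and $\hat{T}_Q$:
\[
\mathbb{W}\bigl(y_{\mathbf{v}_1}\cdot y_{\mathbf{v}_2}\bigr)
=\mathbb{W}\bigl(\mathbb{L}^{\langle\mathbf{v}_1,\mathbf{v}_2\rangle/2}\bigr)\,\mr{y}_{\mathbf{v}_1+\mathbf{v}_2}
=\mr{y}_{\mathbf{v}_1}\cdot \mr{y}_{\mathbf{v}_2}.
\]
Therefore $\mathbb{W}$ extends to a continuous ring homomorphism $\hat{\mathcal{T}}_Q\to \hat{T}_Q$, and by Definition \ref{defn_rdt} one has $A_{Q,W}=\mathbb{W}(\mca{A}_{Q,W})$ and $A^{Z,l}=\mathbb{W}(\mca{A}^{Z,l})$ for each ray $l$.

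Finally I would apply $\mathbb{W}$ to the identity $\mca{A}=\prod^{\curvearrowright}_{l}\mca{A}^{Z,l}$ of Theorem \ref{thm_FP}. The only point that needs a brief verification is that the clockwise infinite product converges in the natural $N$-adic topology on $\hat{T}_Q$ (the one in which the factors $A^{Z,l}$ lie in complementary completed subspaces indexed by rays), and that $\mathbb{W}$ is continuous with respect to these topologies; this is immediate from the grading by $\mathbf{v}\in N$, which is preserved coefficient-wise by $\mathbb{W}$. I do not expect any real obstacle here: once the compatibility of $\mathbb{W}$ with the torus relations and with the grading by dimension vectors is checked, the corollary is a formal consequence of Theorem \ref{thm_FP}.
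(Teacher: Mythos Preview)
Your proposal is correct and is exactly the paper's approach: the corollary is stated immediately after the remark ``Since $\mathbb{W}$ is a ring homomorphism, we get the following factorization formula for refined DT invariants,'' so the paper's entire argument is precisely to apply $\mathbb{W}$ to Theorem~\ref{thm_FP}. Your version merely spells out in more detail why $\mathbb{W}$ extends to a continuous ring homomorphism $\hat{\mathcal{T}}_Q\to\hat{T}_Q$, which the paper leaves implicit.
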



\section{Wall-crossing formula}\label{sec_WC}
\subsection{Motives for $J(Q,W)_C$}
We put
\begin{align*}
\chi_C(\mathbf{v},\mathbf{v'})&:=\sum_{c\in C}v_{t(c)}v'_{h(c)},\\
\chi_{Q_C}(\mathbf{v},\mathbf{v'})&:=
\chi_Q(\mathbf{v},\mathbf{v'})-\chi_C(\mathbf{v},\mathbf{v'}).
\end{align*}
We put
\[
d:=\dim \mr{M}(Q;\mathbf{v}) - \dim \mr{M}(Q_C;\mathbf{v})
=\chi_C(\mathbf{v},\mathbf{v}).
\]
Let $\mathrm{M}(Q,W,C;\mathbf{v})$ be the subset of $\mathrm{M}(Q_C;\mathbf{v})$ consisting of $J(Q,W)_C$-modules.
The following theorem is a generalization of \cite[Equation (2.4)]{behrend_bryan_szendroi} and \cite[Theorem 7.3]{hua}.
\begin{thm}\label{thm_cut_id}
\[
[\varphi_{f_{W,\mathbf{v}}}]
=
\mathbb{L}^d\cdot [\mathrm{M}(Q,W,C;\mathbf{v})].
\]
\end{thm}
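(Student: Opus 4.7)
The plan is to invoke Theorem \ref{thm_BBS} for the $\mathbb{C}^*$-action on $\mathrm{M}(Q;\mathbf{v})$ induced by the cut grading $g_C$, which scales the $C$-arrows with weight $1$ and fixes the arrows of $Q_C$. Homogeneity of $W$ makes $f_{W,\mathbf{v}}$ equivariant with primitive character, and the weak circle-compactness is immediate since $\lim_{\lambda\to 0}\lambda\cdot (x,y)=(x,0)$ exists for every $(x,y)$. Theorem \ref{thm_BBS} then gives
\[
[\varphi_{f_{W,\mathbf{v}}}] \;=\; [f_{W,\mathbf{v}}^{-1}(1)] \;-\; [f_{W,\mathbf{v}}^{-1}(0)],
\]
and the problem reduces to computing these two fiber classes.

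The crucial structural input is that each cyclic summand of $W$ contains exactly one arrow of $C$. Splitting $\mathrm{M}(Q;\mathbf{v}) = \mathrm{M}(Q_C;\mathbf{v}) \times \mathrm{M}(C;\mathbf{v})$, with the second factor of dimension $d = \chi_C(\mathbf{v},\mathbf{v})$, the Chern-Simons functional is therefore \emph{linear} in the second variable:
\[
f_{W,\mathbf{v}}(x,y) \;=\; \sum_{c\in C} \mathrm{tr}\bigl(y_c\cdot(\partial_c W)(x)\bigr),
\]
where $(\partial_c W)(x)$ depends only on $x\in \mathrm{M}(Q_C;\mathbf{v})$. By the very definition of the truncated Jacobian algebra, $x$ lies in $\mathrm{M}(Q,W,C;\mathbf{v})$ precisely when the linear functional $y\mapsto f_{W,\mathbf{v}}(x,y)$ vanishes identically.

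I would then stratify the projection $\pi\colon \mathrm{M}(Q;\mathbf{v})\to \mathrm{M}(Q_C;\mathbf{v})$ by the closed subset $\mathrm{M}(Q,W,C;\mathbf{v})$ and its open complement $U:=\mathrm{M}(Q_C;\mathbf{v})\setminus \mathrm{M}(Q,W,C;\mathbf{v})$. Over $\mathrm{M}(Q,W,C;\mathbf{v})$ the fiber of $f_{W,\mathbf{v}}^{-1}(0)$ is all of $\mathbb{A}^{d}$ and the fiber of $f_{W,\mathbf{v}}^{-1}(1)$ is empty. Over $U$ the linear functional is nonzero, and a Gaussian-elimination argument (locally choose a $c$ with $(\partial_c W)(x)\neq 0$ and change coordinates on $\mathrm{M}(C;\mathbf{v})$ accordingly) exhibits both $f_{W,\mathbf{v}}^{-1}(0)|_U$ and $f_{W,\mathbf{v}}^{-1}(1)|_U$ as Zariski-locally trivial $\mathbb{A}^{d-1}$-bundles over $U$. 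Motivic cut-and-paste then yields
\[
[f_{W,\mathbf{v}}^{-1}(0)] \;=\; \mathbb{L}^{d}[\mathrm{M}(Q,W,C;\mathbf{v})] + \mathbb{L}^{d-1}[U],\qquad [f_{W,\mathbf{v}}^{-1}(1)] \;=\; \mathbb{L}^{d-1}[U],
\]
and subtracting gives (up to the sign convention for $[\varphi_f]$) the claimed identity $\mathbb{L}^{d}\cdot[\mathrm{M}(Q,W,C;\mathbf{v})]$.

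The only genuine obstacle is justifying the Zariski-local triviality of the stratification over $U$, which boils down to the rank-one linear-algebra fact outlined above; everything else is routine bookkeeping of the motivic classes of affine-space fibrations.
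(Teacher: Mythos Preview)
Your proof is correct and follows essentially the same strategy as the paper: apply Theorem \ref{thm_BBS}, observe that $f_{W,\mathbf{v}}$ is linear along the fibers of $\pi\colon \mathrm{M}(Q;\mathbf{v})\to \mathrm{M}(Q_C;\mathbf{v})$, and stratify the base by whether that linear form vanishes, obtaining exactly your formula for $[f_{W,\mathbf{v}}^{-1}(0)]$. The one minor difference is that the paper computes $[f_{W,\mathbf{v}}^{-1}(1)]$ indirectly via the identity $(\mathbb{L}-1)[f_{W,\mathbf{v}}^{-1}(1)] = [\mathrm{M}(Q;\mathbf{v})] - [f_{W,\mathbf{v}}^{-1}(0)]$ coming from the $\mathbb{C}^*$-action, which sidesteps the Zariski-local-triviality step for the affine-hyperplane bundle $f_{W,\mathbf{v}}^{-1}(1)|_U$ that you flagged as the only nontrivial point.
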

\begin{proof}
Note that we have
\begin{align}
[\varphi_{f_{W,\mathbf{v}}}]
&=
f_{W,\mathbf{v}}^{-1}(1)
-
f_{W,\mathbf{v}}^{-1}(0)\notag\\
&=
\frac{[\mr{M}(Q;\mathbf{v})]-f_{W,\mathbf{v}}^{-1}(0)}{\mathbb{L}-1}
-
f_{W,\mathbf{v}}^{-1}(0)\notag\\
&=
\frac{[\mr{M}(Q;\mathbf{v})]-\mathbb{L}\cdot f_{W,\mathbf{v}}^{-1}(0)}{\mathbb{L}-1}.\label{eq_1}
\end{align}
Let $\pi\colon \mr{M}(Q;\mathbf{v})\to \mr{M}(Q_C;\mathbf{v})$ be the natural projection. 
This is a trivial vector bundle of rank $d$.
Since we have
\[
W=\sum_{e\in C}\partial_eW, 
\]
the restriction of $f_{W,\mathbf{v}}$ to the fiber $\pi^{-1}(x)$ is zero if $x\in \mathrm{M}(Q,W,C;\mathbf{v})$ and is a non-zero linear function if $x\notin \mathrm{M}(Q,W,C;\mathbf{v})$.
Hence we have
\begin{equation}\label{eq_2}
f_{W,\mathbf{v}}^{-1}(0)=
\mathbb{L}^d\cdot [\mathrm{M}(Q,W,C;\mathbf{v})] 
+
\mathbb{L}^{d-1}\bigl([\mathrm{M}(Q_C;\mathbf{v})]-[\mathrm{M}(Q,W,C;\mathbf{v})]\bigr).
\end{equation}
Substitute \eqref{eq_2} to \eqref{eq_1}, then the claim follows.
\end{proof}
\begin{rem}
For the cohomological Hall algebra, a similar statement is proved in \cite[Proposition 6]{COHA}.
\end{rem}

\subsection{Mutation and bilinear forms}
Let $(Q',W',C')$ be the new QP with the cut given by the mutation at a strict source $k$ of $(Q,W,C)$.

We identify the Grothendieck group $K_0(\mathrm{mod}(J(Q,W)))$ with $\mathbb{Z}^{Q_0}$ as before. 
We define 
\[
\phi_k\colon 
\mathbb{Z}^{Q_0} \overset{\sim}{\longrightarrow} \mathbb{Z}^{Q'_0}
\]
by
\[
\phi_k([s_i])=
\begin{cases}
[s'_i] & i\neq k,\\
-[s'_k]+\sum_{t(b)=k}[s'_{h(b)}] & i=k.
\end{cases}
\]
Then we can verify the following :
\begin{lem}\label{lem_43}
\begin{align*}
\chi_Q(\mathbf{v},\mathbf{w})
&=
\chi_{Q'}(\phi_k(\mathbf{v}),\phi_k(\mathbf{w})),\\
\chi_C(\mathbf{v},\mathbf{w})
&=
\chi_{C'}(\phi_k(\mathbf{v}),\phi_k(\mathbf{w})),\\
\chi_{Q_C}(\mathbf{v},\mathbf{w})
&=
\chi_{Q_{C'}}(\phi_k(\mathbf{v}),\phi_k(\mathbf{w})).
\end{align*}
\end{lem}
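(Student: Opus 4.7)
\emph{Plan.} I would prove the three identities by direct combinatorial substitution, using the explicit description of the mutation at a strict source $k$ together with the formula for $\phi_k$. First I would unpack $\phi_k$ on dimension vectors: if $\mathbf v=(v_i)_{i\in Q_0}$, then $\phi_k(\mathbf v)_j = v_j + v_k\,Q_{kj}$ for $j\neq k$ and $\phi_k(\mathbf v)_k = -v_k$. The strict source hypothesis then gives two simplifications that I would record at the outset: every arrow ending at $k$ lies in $C$, so $Q_{ik}=C_{ik}$ for every $i$, and every arrow starting at $k$ lies outside $C$, so $C_{kj}=0$ for every $j\neq k$.

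Next I would catalogue the arrow data in $(Q',C')$ from the graded mutation $\tilde\mu^L_k$ followed by the splitting/reduction step. The arrows of $Q'$ incident to $k$ are the reversed arrows $a^*\colon k\to u$ and $b^*\colon v\to k$, of counts $Q_{uk}=C_{uk}$ and $Q_{kv}$ respectively; both have degree $0$, so $C'_{ku}=C'_{uk}=0$ while $Q'_{ku}=C_{uk}$ and $Q'_{uk}=Q_{kv}$. For $u,v\neq k$, the pre-reduction arrows between $u$ and $v$ are the original arrows, together with $Q_{uk}Q_{kv}=C_{uk}Q_{kv}$ new composed arrows $[ba]\colon u\to v$ of degree $1$; the $2$-cycle reduction then pairs off degree-$1$ arrows in one direction with degree-$0$ arrows in the opposite direction. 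With this in place, each of the three identities becomes a polynomial identity in the variables $v_i,w_i,Q_{ij},C_{ij}$, and the third identity follows at once from the first two via $\chi_{Q_C}=\chi_Q-\chi_C$, so the effective work is to verify the identities for $\chi_Q$ and $\chi_C$.

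The main obstacle is bookkeeping the $2$-cycle cancellations cleanly, since a naive approach requires a case split on which of the two competing pools of degree-$1$ and degree-$0$ arrows between $u$ and $v$ is larger. I would bypass this case analysis by observing that every cancelled $2$-cycle removes one arrow in each direction of complementary degrees, and that these removals contribute symmetrically to the two sides of each identity once the $v_k\,Q_{kj}$ corrections produced by $\phi_k$ are expanded; hence the identities can be verified at the pre-reduction level, where $Q'_{ij}$ and $C'_{ij}$ are explicit polynomials in $Q_{ij}$ and $C_{ij}$. After this reduction, the $v_k$-linear and $v_kw_k$ terms introduced by $\phi_k$ on the right-hand side match exactly with the contributions of the arrows incident to $k$ in $(Q,C)$ on the left-hand side (using the strict source constraints $Q_{ik}=C_{ik}$ and $C_{kj}=0$), and all remaining monomials either agree term-by-term with the corresponding contributions from the composed arrows $[ba]$ and reversed arrows $a^*,b^*$, or coincide with the contributions from the original arrows not incident to $k$ (which are unchanged by the mutation).
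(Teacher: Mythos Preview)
The paper gives no argument beyond the phrase ``Then we can verify the following'', together with a Remark interpreting the identities via Euler forms and the Keller--Yang derived equivalence. So a direct combinatorial substitution is exactly what is intended, and your overall plan is in the right spirit.

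The step that does not go through is your reduction to the pre-cancellation quiver $(\widetilde Q,\widetilde C)$. You assert that the $2$-cycle removals ``contribute symmetrically to the two sides of each identity'' and hence can be ignored. But the left-hand sides $\chi_Q$, $\chi_C$, $\chi_{Q_C}$ involve only $(Q,C)$; the cancellation step changes only the right-hand sides. Concretely, deleting a $2$-cycle between $u,v\neq k$ (one degree-$1$ composite arrow $u\to v$ and one degree-$0$ original arrow $v\to u$) alters $\chi_{Q'}(\mathbf v',\mathbf w')$ relative to $\chi_{\widetilde Q}(\mathbf v',\mathbf w')$ by $+\,v'_u w'_v + v'_v w'_u$, while it alters $\chi_{C'}$ relative to $\chi_{\widetilde C}$ by $-\,v'_u w'_v$ only. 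These corrections are nonzero and not of the same shape across the three forms, so none of the three identities is equivalent to its pre-reduction analogue. Already for the $3$-cycle $1\to k\to 2\to 1$ with cut $C=\{1\to k\}$, a single $2$-cycle is cancelled and a direct computation gives $\chi_{\widetilde Q}(\phi_k(\mathbf v),\phi_k(\mathbf w))\ne \chi_Q(\mathbf v,\mathbf w)$. With your formula for $\phi_k$ one finds, for instance, an unmatched term $-v_1w_2$ on the $\widetilde Q$ side that only disappears after the cancellation is performed. The case split you hoped to avoid therefore cannot be bypassed this way: you must keep track of exactly which arrows survive the reduction, or else appeal to the derived-equivalence interpretation sketched in the paper's Remark rather than to a purely combinatorial shortcut.
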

\begin{rem}
The bilinear form $\chi_Q$ is the Euler form of the derived category of the Ginzburg's dg algebra. 
The map $\phi_k$ is induced from the Keller-Yang's derived equivalence (\cite{dong-keller,keller-completion}). 
This is the origin of the first equation. 

If the Ginzburg's dg algebra is concentrated on degree $0$, then the bilinear form $\chi_{Q_C}$ is the Euler form of the derived category of $J(Q,W)_C$. In such a cases, we have the derived equivalence between $J(Q,W)_C$ and $J(Q',W')_{C'}$ which induces $\phi_k$. This is the origin of the third equation.
\end{rem}

\subsection{Mutation and a motivic identity}
For a strict source $k$, we define open subsets 
$\mathrm{M}(Q;\mathbf{v})_k\subset \mathrm{M}(Q;\mathbf{v})$ and
$\mathrm{M}(Q_C;\mathbf{v})_k\subset \mathrm{M}(Q_C;\mathbf{v})$ by 
\begin{align*}
\mathrm{M}(Q;\mathbf{v})_k&:=\{ V\in \mathrm{M}(Q;\mathbf{v})\mid \Hom(s_k,V)=0 \},\\
\mathrm{M}(Q_C;\mathbf{v})_k&:=\{ V\in \mathrm{M}(Q_C;\mathbf{v})\mid \Hom(s_k,V)=0 \}
\end{align*}
and put
\[
\mathrm{M}(Q,W,C;\mathbf{v})_k
:=
\mathrm{M}(Q_C;\mathbf{v})_k
\cap 
\mathrm{M}(Q,W,C;\mathbf{v}).
\]
Note that we have $\pi^{-1}(\mathrm{M}(Q_C;\mathbf{v})_k)=\mathrm{M}(Q;\mathbf{v})_k$. 
We put
\[
f_{W,\mathbf{v},k}
:=
f_{W,\mathbf{v}}|_{M(Q,W,C,\mathbf{v})_k}.
\]
We can prove the following in the same way as Theorem \ref{thm_cut_id} :
\begin{prop}\label{prop_43}
\[
[\varphi_{f_{W,\mathbf{v},k}}]
=
\mathbb{L}^{d}\cdot [\mathrm{M}(Q,W,C;\mathbf{v})_k].
\]
\end{prop}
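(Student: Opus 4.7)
The plan is to run the argument of Theorem \ref{thm_cut_id} verbatim on the open subloci indexed by the subscript $k$. The essential input, already recorded just before the statement, is that $\pi^{-1}(\mathrm{M}(Q_C;\mathbf{v})_k) = \mathrm{M}(Q;\mathbf{v})_k$: the condition $\Hom(s_k,V)=0$ is the injectivity of a map whose components are the arrows with source $k$, and since $k$ is a strict source of $(Q,C)$ all such arrows lie in $Q_C$, so the condition is constant along the fibers of $\pi$.

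First I would check that the hypotheses of Theorem \ref{thm_BBS} descend to the open subvariety. The $\mathbb{C}^*$-action coming from the cut $C$ scales the $C$-arrows and fixes the $Q_C$-arrows; in particular it fixes the arrows leaving $k$, so it preserves $\mathrm{M}(Q;\mathbf{v})_k$, and the limit $\lim_{\lambda\to 0}\lambda\cdot V$ of a point $V\in\mathrm{M}(Q;\mathbf{v})_k$ has the same arrows at $k$ as $V$ itself, hence remains in $\mathrm{M}(Q;\mathbf{v})_k$. Thus weak circle compactness and primitivity of the character pass to the open subvariety, and the same manipulation as in the proof of Theorem \ref{thm_cut_id} gives
\[
[\varphi_{f_{W,\mathbf{v},k}}] = [f_{W,\mathbf{v},k}^{-1}(1)] - [f_{W,\mathbf{v},k}^{-1}(0)] = \frac{[\mathrm{M}(Q;\mathbf{v})_k] - \mathbb{L}\cdot [f_{W,\mathbf{v},k}^{-1}(0)]}{\mathbb{L}-1}.
\]

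Second, I would exploit that $\pi$ restricts to a trivial rank-$d$ vector bundle $\pi_k\colon \mathrm{M}(Q;\mathbf{v})_k \to \mathrm{M}(Q_C;\mathbf{v})_k$, where $d = \chi_C(\mathbf{v},\mathbf{v})$. The fiberwise analysis used in Theorem \ref{thm_cut_id} is purely local in the base: by the cut decomposition of $W$, the restriction of $f_{W,\mathbf{v}}$ to $\pi_k^{-1}(x)$ is identically zero if $x \in \mathrm{M}(Q,W,C;\mathbf{v})_k$ and is a nonzero linear function otherwise, yielding
\[
[f_{W,\mathbf{v},k}^{-1}(0)] = \mathbb{L}^d [\mathrm{M}(Q,W,C;\mathbf{v})_k] + \mathbb{L}^{d-1}\bigl([\mathrm{M}(Q_C;\mathbf{v})_k] - [\mathrm{M}(Q,W,C;\mathbf{v})_k]\bigr).
\]
Substituting this into the previous display and using $[\mathrm{M}(Q;\mathbf{v})_k] = \mathbb{L}^d\,[\mathrm{M}(Q_C;\mathbf{v})_k]$, the $(\mathbb{L}-1)$ denominator cancels exactly as in Theorem \ref{thm_cut_id} and delivers $[\varphi_{f_{W,\mathbf{v},k}}] = \mathbb{L}^d \cdot [\mathrm{M}(Q,W,C;\mathbf{v})_k]$.

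There is no real obstacle here; the content of the proposition is precisely that each ingredient of the proof of Theorem \ref{thm_cut_id} is compatible with passing to the open subset cut out by $\Hom(s_k,V)=0$, and both compatibilities — torus-invariance of the open condition and its saturation under $\pi$ — follow immediately from $k$ being a strict source of $C$.
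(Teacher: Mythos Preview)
Your proposal is correct and matches the paper's approach exactly: the paper's entire proof is the sentence ``We can prove the following in the same way as Theorem~\ref{thm_cut_id}'', and you have carried out precisely that, additionally verifying that the open condition $\Hom(s_k,V)=0$ is $\mathbb{C}^*$-invariant and $\pi$-saturated (both of which follow from $k$ being a strict source). Nothing is missing.
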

We use the upper subscript $M(\cdots){}^k$ for the ones with the condition $\Hom(-,s_k')=0$.
\begin{prop}\label{prop_44}
\begin{equation}
\frac{[\mr{M}(Q,W,C;\mathbf{v})_k]}{[G(\mathbf{v})]}
=
\frac{[\mr{M}(Q',W',C';\mathbf{v}')^k]}{[G(\mathbf{v}')]}
\end{equation}
where $\mathbf{v}'=\phi_k(\mathbf{v})$.
\end{prop}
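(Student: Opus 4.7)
The plan is to exhibit an explicit equivalence of groupoids between $(Q,W,C)$-modules $V$ satisfying $\Hom(s_k,V)=0$ and $(Q',W',C')$-modules $V'$ satisfying $\Hom(V',s'_k)=0$, lift it to an isomorphism of quotient stacks, and invoke Lemma~\ref{lem_comporison}. First I exploit the strict-source hypothesis: write the arrows incident to $k$ as $a_i:t(a_i)\to k$ (all in $C$) and $b_j:k\to h(b_j)$ (none in $C$, with $h(b_j)\neq k$ since there are no loops). Since $W$ has degree one in $C$ and the only $C$-arrows at $k$ are the $a_i$, every cyclic monomial of $W$ visiting $k$ does so exactly once, entering via some $a_i$ and leaving via some $b_j$. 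Hence
\[
W=\sum_{i,j} a_i b_j p_{ij}+W_0,
\]
where $p_{ij}\in\mathbb{C}Q_C$ is a combination of paths $h(b_j)\to t(a_i)$ avoiding $k$, and $W_0$ involves no arrow at $k$. In particular $\partial_{a_i}W=\sum_j b_j p_{ij}$.

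Given $V\in \mr{M}(Q,W,C;\mathbf{v})_k$, I construct $V'$ by the natural tilting recipe: set $V'_i:=V_i$ for $i\neq k$ and
\[
V'_k:=\mr{coker}\Bigl(V_k\xrightarrow{\oplus V(b_j)}\bigoplus_j V_{h(b_j)}\Bigr),
\]
which is well-defined since $\Hom(s_k,V)=0$ forces injectivity. Let $V'(b_j^*)$ be the $j$-th inclusion composed with the cokernel projection; their joint image is all of $V'_k$, so $\Hom(V',s'_k)=0$. Define $V'(a_i^*):V'_k\to V_{t(a_i)}$ as the map induced by $-\sum_j V(p_{ij}):\bigoplus_j V_{h(b_j)}\to V_{t(a_i)}$; this descends to the cokernel precisely because the $J(Q,W)_C$-relation $\partial_{a_i}W=0$ gives $\sum_j V(p_{ij})V(b_j)=0$ on $V_k$. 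All arrows not incident to $k$ act on $V'$ as they do on $V$, and a dimension count shows the resulting dimension vector is $\phi_k(\mathbf{v})$.

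I then verify $V'\in \mr{M}(Q',W',C';\mathbf{v}')^k$ by checking $\partial_{c'}W'=0$ for every $c'\in C'$. For $c'=[b_ja_i]$ (a new cut element) one computes $\partial_{[b_ja_i]}W'=p_{ij}+a_i^*b_j^*$, which vanishes on $V'$ by the very definition of $V'(a_i^*)$. For $c'$ lying in the unchanged part $C\cap C'$ of the cut, the summand $\Delta=\sum a_i^*b_j^*[b_ja_i]$ does not contribute and $\partial_{c'}[W]$ arises from $\partial_{c'}W$ by the substitution $a_ib_j\leadsto[b_ja_i]$; after setting $[b_ja_i]=0$ in $J(Q',W')_{C'}$, only the monomials of $\partial_{c'}W$ that do not visit $k$ survive, and these coincide with the monomials that appear on $V$, where $\partial_{c'}W=0$. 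The inverse construction is dual: set $V_k:=\ker(\oplus_j V'(b_j^*))$ and recover $V(b_j)$ via the evident factorization; the relation $\partial_{[b_ja_i]}W'=0$ on $V'$ translates precisely into $\partial_{a_i}W=0$ on $V$.

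These mutually inverse constructions are functorial in isomorphisms, so they assemble into an equivalence of groupoids, hence an isomorphism of the quotient stacks $[\mr{M}(Q,W,C;\mathbf{v})_k/G(\mathbf{v})]\simeq[\mr{M}(Q',W',C';\mathbf{v}')^k/G(\mathbf{v}')]$. As $G(\mathbf{v})$ and $G(\mathbf{v}')$ are products of general linear groups, hence special, Lemma~\ref{lem_comporison} yields the claimed equality of motives. The main obstacle is the combinatorial bookkeeping for the Jacobi relations: one must carefully track how the cyclic derivatives of $W$ transform under the substitution $a_ib_j\leadsto[b_ja_i]$ and the addition of $\Delta$, and recognize that the matching of relations on the two sides hinges on the single identity $V'(a_i^*)V'(b_j^*)=-V(p_{ij})$, itself a direct consequence of the strict-source hypothesis.
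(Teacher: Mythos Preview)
Your argument is correct and follows essentially the same route as the paper. The paper's proof reads: ``This is a consequence of Proposition~\ref{prop_tilting} and Lemma~\ref{lem_comporison}'', where Proposition~\ref{prop_tilting} (proved in the appendix \S\ref{subsec_appendix}) is precisely the equivalence $\mr{mod}(J(Q,W)_C)_k\simeq\mr{mod}(J(Q',W')_{C'})^k$ obtained from the explicit Derksen--Weyman--Zelevinsky tilting construction at $k$; the cokernel description of $V'_k$, the definition of $b_j^*$ as inclusion-then-projection, and the definition of $a_i^*$ via the derivatives $\partial_{[b_ja_i]}[W]$ (your $p_{ij}$) all match your recipe. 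The only organisational difference is that the paper first records the equivalence at the level of full Jacobi algebras and then observes that it restricts to the truncated categories (since $a_i\in C$ forces $[b_ja_i]$ to act by zero and conversely), whereas you build the equivalence directly between the truncated module categories; this buys you a slightly more self-contained argument but is mathematically the same. One point the paper flags in a footnote and that you pass over lightly: to invoke Lemma~\ref{lem_comporison} one needs an isomorphism of stacks, not merely of groupoids of $\mathbb{C}$-points, so the construction must be checked to work in families (it does, being given by kernels/cokernels of maps of vector bundles on the base).
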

\begin{proof}
This is a consequence of Proposition \ref{prop_tilting} and Lemma \ref{lem_comporison}. 
\footnote{In fact, the equivalence in Corollary \ref{prop_tilting} gives only the bijection of $\mathbb{C}$-valued points. But all the arguments in \S \ref{subsec_appendix} can be applied for families of representations and we can see the isomorphism of the moduli stacks.}
\end{proof}
Combining this with Proposition \ref{prop_43} and Lemma \ref{lem_43}, we get the following identity of the virtual motives of the same moduli stack with different Chern-Simons functionals :
\begin{thm}\label{thm_45}
\[
\mathfrak{M}_{\mr{vir}}(Q,W;\mathbf{v})_k
=
\mathfrak{M}_{\mr{vir}}(Q',W';\mathbf{v}')^k.
\]
\end{thm}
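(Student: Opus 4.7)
The plan is to unwind the definition of $\mathfrak{M}_{\mr{vir}}(Q,W;\mathbf{v})_k$ and $\mathfrak{M}_{\mr{vir}}(Q',W';\mathbf{v}')^k$ into expressions involving the \emph{truncated} moduli $\mathrm{M}(Q,W,C;\mathbf{v})_k$ and $\mathrm{M}(Q',W',C';\mathbf{v}')^k$, then check that the inputs on the two sides match by invoking Proposition \ref{prop_44} for the motivic classes and Lemma \ref{lem_43} for the normalizing powers of $\mathbb{L}$.

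Concretely, I would first apply Definition \ref{defn_vir} and the identity $[G(\mathbf{v})]_\mr{vir}=\mathbb{L}^{-\dim G(\mathbf{v})/2}[G(\mathbf{v})]$ to write
\[
\mathfrak{M}_{\mr{vir}}(Q,W;\mathbf{v})_k
=
-\,\mathbb{L}^{(\dim G(\mathbf{v})-\dim\mathrm{M}(Q;\mathbf{v}))/2}\cdot\frac{[\varphi_{f_{W,\mathbf{v},k}}]}{[G(\mathbf{v})]}.
\]
Then I would substitute Proposition \ref{prop_43}, namely $[\varphi_{f_{W,\mathbf{v},k}}]=\mathbb{L}^{\chi_C(\mathbf{v},\mathbf{v})}[\mathrm{M}(Q,W,C;\mathbf{v})_k]$, together with the identity $\dim\mathrm{M}(Q;\mathbf{v})-\dim G(\mathbf{v})=-\chi_Q(\mathbf{v},\mathbf{v})$, to obtain
\[
\mathfrak{M}_{\mr{vir}}(Q,W;\mathbf{v})_k
=
-\,\mathbb{L}^{\chi_Q(\mathbf{v},\mathbf{v})/2+\chi_C(\mathbf{v},\mathbf{v})}\cdot\frac{[\mathrm{M}(Q,W,C;\mathbf{v})_k]}{[G(\mathbf{v})]}.
\]
An identical computation applied to $(Q',W',C')$ at $\mathbf{v}'=\phi_k(\mathbf{v})$ yields the analogous expression with all primes in place.

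Comparing the two expressions, the exponent of $\mathbb{L}$ agrees thanks to the first two equalities of Lemma \ref{lem_43}, which give $\chi_Q(\mathbf{v},\mathbf{v})=\chi_{Q'}(\mathbf{v}',\mathbf{v}')$ and $\chi_C(\mathbf{v},\mathbf{v})=\chi_{C'}(\mathbf{v}',\mathbf{v}')$; and the ratio $[\mathrm{M}(Q,W,C;\mathbf{v})_k]/[G(\mathbf{v})]$ agrees with its primed counterpart by Proposition \ref{prop_44}. Putting everything together gives the asserted equality.

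There is essentially no obstacle left at this stage: the substantive input (existence of the derived equivalence, its compatibility with the truncated Jacobian, and the resulting stack-level bijection together with the invariance of Euler forms under $\phi_k$) has already been packaged into Proposition \ref{prop_44} and Lemma \ref{lem_43}, while Proposition \ref{prop_43}, itself a variant of Theorem \ref{thm_cut_id}, rewrites the vanishing cycle as a plain motivic class on which those inputs can act. The only mild thing to keep track of is the bookkeeping of the powers of $\mathbb{L}$ arising from the comparison of $\dim\mathrm{M}(Q;\mathbf{v})$ with $\dim\mathrm{M}(Q_C;\mathbf{v})$ and $\dim G(\mathbf{v})$, which is precisely what Lemma \ref{lem_43} is designed to handle.
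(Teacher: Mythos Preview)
Your proposal is correct and follows exactly the route the paper indicates: the paper simply says ``Combining this with Proposition \ref{prop_43} and Lemma \ref{lem_43}'' after Proposition \ref{prop_44}, and you have spelled out precisely that combination, unwinding Definition \ref{defn_vir}, inserting Proposition \ref{prop_43} to pass to the truncated moduli, matching the $\mathbb{L}$-powers via Lemma \ref{lem_43}, and matching the ratios via Proposition \ref{prop_44}. There is nothing to add.
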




\subsection{Wall-crossing formula for motivic DT invariants}
In this subsection, we will work over $\mathbb{Q}$ and use the notations as $\hat{\mathcal{T}}^{\mathbb{Q}}_{Q}:=\hat{\mathcal{T}}_{Q}\otimes \mathbb{Q}$.

We put $N_{Q,Q'}:=\phi_k^{-1}(N)\cap N$ and
\[
\hat{\mathcal{T}}^{\mathbb{Q}}_{Q,Q'}:=
\prod_{\mathbf{v}\in N_{Q,Q'}} \widetilde{\mathcal{M}}_\mathbb{C}\cdot y_\mathbf{v}
\subset \hat{\mathcal{T}}^{\mathbb{Q}}_{Q}.
\]
Note that $\hat{\mathcal{T}}^{\mathbb{Q}}_{Q,Q'}$ is also a subalgebra of $\hat{\mathcal{T}}^{\mathbb{Q}}_{Q'}$. We put
\begin{align*}
\mca{A}_{Q,W,k}:=&\,
1+\sum_{\mathbf{v}\in N_{Q,Q'}}
\mathfrak{M}_{\mr{vir}}(Q,W;\mathbf{v})_k\cdot y_\mathbf{v}\\
=&\,
1+\sum_{\mathbf{v}\in N_{Q,Q'}}
\mathfrak{M}_{\mr{vir}}(Q',W';\mathbf{v}')^k\cdot y_\mathbf{v}\quad (\text{Theorem \ref{thm_45}})\\
\in&\  
\hat{\mathcal{T}}^{\mathbb{Q}}_{Q,Q'}.
\end{align*}
and
\begin{align*}
\mathbb{E}(s_k)
&:=
\sum_{n\geq 0}\frac{[\mr{pt}]}{[\mr{GL}_n]_\mr{vir}}\cdot {y}_{\,[\,(s_k)^{\oplus n}]}\in \mca{T}_Q^{\mathbb{Q}},\\
\mathbb{E}(s'_k)
&:=
\sum_{n\geq 0}\frac{[\mr{pt}]}{[\mr{GL}_n]_\mr{vir}}\cdot {y}_{\,[\,(s_k')^{\oplus n}]}\in \mca{T}^{\mathbb{Q}}_{Q'}
\end{align*}
(see Example \ref{ex_dilog}).
In the same way as Theorem \ref{thm_FP}, we can see the following factorizations :
\begin{align*}
\mca{A}_{Q,W}&=\mca{A}_{Q,W,k}\times \mathbb{E}(s_k) \in \hat{\mathcal{T}}^{\mathbb{Q}}_{Q},\\
\mca{A}_{Q',W'}&=\mathbb{E}(s'_k) \times \mca{A}_{Q,W,k} \in \hat{\mathcal{T}}_{Q'}^{\mathbb{Q}}.
\end{align*}
Now we get the following wall-crossing formula for the motivic DT invariants :
\begin{thm}\label{thm_WC}
We have
\[
\mca{A}_{Q,W}\times \mathbb{E}(s_k)^{-1},\quad
\mathbb{E}(s'_k)^{-1} \times \mca{A}_{Q',W'}\in \hat{\mathcal{T}}_{Q,Q'}^{\mathbb{Q}}
\]
and they coincide.
\end{thm}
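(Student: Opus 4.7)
The plan is to read off Theorem \ref{thm_WC} from the two factorization identities
\[
\mca{A}_{Q,W}=\mca{A}_{Q,W,k}\times \mathbb{E}(s_k),\qquad
\mca{A}_{Q',W'}=\mathbb{E}(s'_k)\times \mca{A}_{Q,W,k}
\]
displayed immediately before the theorem, together with the identification of invariants from Theorem \ref{thm_45} and the compatibility of bilinear forms from Lemma \ref{lem_43}. The strategy splits naturally into three stages.

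First, I would derive the two factorizations as instances of Theorem \ref{thm_FP}. On $(Q,W)$ choose a central charge $Z$ with $\mathrm{Arg}\,Z(s_k)$ strictly smaller than $\mathrm{Arg}\,Z(s_i)$ for every $i\neq k$. Since $s_k$ is simple, the only $Z$-semistables on the $s_k$-ray are the modules $s_k^{\oplus n}$, whose moduli contribute $\mathbb{E}(s_k)$; on every other ray the $Z$-semistable objects satisfy $\Hom(s_k,V)=0$, because any nontrivial map $s_k\to V$ would violate the slope inequality at the last HN factor. Collecting these rays in clockwise order gives $\mca{A}_{Q,W}=\mca{A}_{Q,W,k}\times\mathbb{E}(s_k)$, with $s_k$ at the far right. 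On $(Q',W')$ run the symmetric argument with a central charge placing $\mathrm{Arg}\,Z'(s'_k)$ as the maximum, so that $s'_k$ is the first HN factor and the remaining rays parametrize objects with $\Hom(V,s'_k)=0$; this yields $\mca{A}_{Q',W'}=\mathbb{E}(s'_k)\times\mca{A}'_{Q',W',k}$, where
\[
\mca{A}'_{Q',W',k}=1+\sum_{\mathbf{v}'\in N}\mathfrak{M}_{\mr{vir}}(Q',W';\mathbf{v}')^{k}\cdot y_{\mathbf{v}'}.
\]

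Second, I would identify these two ``middle'' factors. Theorem \ref{thm_45} states that for every $\mathbf{v}\in N_{Q,Q'}$ with $\mathbf{v}'=\phi_k(\mathbf{v})$ one has $\mathfrak{M}_{\mr{vir}}(Q,W;\mathbf{v})_k=\mathfrak{M}_{\mr{vir}}(Q',W';\mathbf{v}')^{k}$. Under the identification of variables $y_{\mathbf{v}}\leftrightarrow y_{\phi_k(\mathbf{v})}$, Lemma \ref{lem_43} ensures that the skew-symmetric forms defining the products on $\hat{\mathcal{T}}^{\mathbb{Q}}_{Q}$ and $\hat{\mathcal{T}}^{\mathbb{Q}}_{Q'}$ restrict to the same form on $N_{Q,Q'}$, so the common subspace $\hat{\mathcal{T}}^{\mathbb{Q}}_{Q,Q'}$ is a subalgebra of each. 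In $\hat{\mathcal{T}}^{\mathbb{Q}}_{Q,Q'}$ one then has $\mca{A}_{Q,W,k}=\mca{A}'_{Q',W',k}$, both equal to the defining series of the shared class of modules in $^{\bot}\mathcal{S}=(\mathcal{S}')^{\bot}$.

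Third, I would rewrite the two factorizations as
\[
\mca{A}_{Q,W}\times\mathbb{E}(s_k)^{-1}=\mca{A}_{Q,W,k},\qquad
\mathbb{E}(s'_k)^{-1}\times\mca{A}_{Q',W'}=\mca{A}'_{Q',W',k},
\]
both of which lie in $\hat{\mathcal{T}}^{\mathbb{Q}}_{Q,Q'}$ because only dimension vectors in $N_{Q,Q'}$ appear. By the identification in the previous paragraph, the two sides coincide, which is the content of Theorem \ref{thm_WC}.

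The only substantive obstacle I expect is the careful justification that the HN-rays for $(Q,W)$ with $Z$-minimal slope $s_k$ actually isolate only the full subcategory $\mathcal{S}$ of $s_k$-powers, and symmetrically for $(Q',W')$; this relies on $s_k$ (resp.\ $s'_k$) being simple and on the structure of HN filtrations used in \S\ref{subsec_ks52}. Beyond that, every other step is a formal rearrangement once Theorems \ref{thm_FP}, \ref{thm_45} and Lemma \ref{lem_43} are applied.
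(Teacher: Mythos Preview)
Your proposal is correct and follows exactly the paper's route: the two displayed factorizations are obtained ``in the same way as Theorem \ref{thm_FP}'' by placing $s_k$ (resp.\ $s'_k$) on an extremal ray, and the middle factors are identified via Theorem \ref{thm_45} together with the compatibility of forms in Lemma \ref{lem_43}, after which the theorem is a formal rearrangement. One small caution: with $s_k$ at \emph{minimal} argument the semistables on the remaining rays satisfy $\Hom(V,s_k)=0$ (quotients of a semistable have larger or equal phase), not $\Hom(s_k,V)=0$; the paper's own notation is not entirely consistent on this point, so just be careful to match the side on which $\mathbb{E}(s_k)$ appears with the correct torsion pair when you write it out.
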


\subsection{Refined DT invariants}
\begin{NB}
We put
\[
\hat{{T}}_{Q}:=
\prod_{\mathbf{v}\in N} \mathbb{Q}(q)\cdot y_\mathbf{v}.
\]
and define the generating function of {\it refined Donaldson-Thomas invariants} by
\[
A_{Q,W}:=1+\sum_{\mathbf{v}\in N}
W\bigl(\mathfrak{M}_{\mr{vir}}(Q,W,\mathbf{v})\bigr)\cdot y_\mathbf{v}
\in \hat{{T}}_{Q}.
\]
\end{NB}
We define $\hat{T}_{Q,Q'}$ in the same way.

Taking the weight polynomial $\mathbb{W}$ of the equation in Theorem \ref{thm_WC}, we get the following formula which describes the relation between the refined DT invariants of $(Q,W)$ and $(Q',W')$ in terms of the quantum dilogarithm :
\begin{thm}
We have
\[
{A}_{Q,W}\times \mathbb{E}_q(y_k)^{-1},\quad
\mathbb{E}_q(y_k^{-1})^{-1} \times {A}_{Q',W'}\in \hat{{T}}_{Q,Q'}
\]
and they coincide.
\end{thm}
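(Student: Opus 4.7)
The plan is to obtain this refined statement as a direct consequence of the motivic wall-crossing formula in Theorem \ref{thm_WC} by applying the weight-polynomial ring homomorphism $\mathbb{W}$ term-wise. First, I would verify that $\mathbb{W}\colon \widetilde{\mathcal{M}}_{\mathbb{C}} \to \mathbb{Q}(q^{1/2})$ extends to a ring homomorphism from the motivic torus $\hat{\mathcal{T}}^{\mathbb{Q}}_Q$ to the quantum torus $\hat{T}_Q$. This reduces to checking that the image of the twist relation $y_{\mathbf{v}_1}\cdot y_{\mathbf{v}_2} = \mathbb{L}^{\langle \mathbf{v}_1,\mathbf{v}_2\rangle/2}\, y_{\mathbf{v}_1+\mathbf{v}_2}$ matches the defining relation $\mathrm{y}_{\mathbf{v}_1}\cdot \mathrm{y}_{\mathbf{v}_2} = q^{\langle \mathbf{v}_1,\mathbf{v}_2\rangle/2}\, \mathrm{y}_{\mathbf{v}_1+\mathbf{v}_2}$ under the substitution $\mathbb{W}(\mathbb{L})=q$; the fact that $\mathbb{W}(\mathbb{L}^{1/2})=q^{1/2}$ makes this compatibility immediate. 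Exactly the same check works for the subalgebras $\hat{\mathcal{T}}^{\mathbb{Q}}_{Q,Q'} \to \hat{T}_{Q,Q'}$ indexed by $N_{Q,Q'}$.

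Next, by Definition \ref{defn_rdt} the generating series $A_{Q,W}$ is by construction the image of $\mca{A}_{Q,W}$ under this extended $\mathbb{W}$, and similarly for $A_{Q',W'}$. By Example \ref{ex_dilog}, $\mathbb{W}(\mathbb{E}(s_k)) = \mathbb{E}_q(y_k)$ and $\mathbb{W}(\mathbb{E}(s_k'[1])) = \mathbb{E}_q(y_k^{-1})$ (the shift by $[1]$ on the motivic side corresponds to inverting the quantum torus variable, since shifting flips the sign of the dimension vector in the Grothendieck group). Therefore applying $\mathbb{W}$ to the two elements
\[
\mca{A}_{Q,W}\times \mathbb{E}(s_k)^{-1}, \qquad \mathbb{E}(s_k')^{-1}\times \mca{A}_{Q',W'}
\]
appearing in Theorem \ref{thm_WC} produces exactly
\[
A_{Q,W}\times \mathbb{E}_q(y_k)^{-1}, \qquad \mathbb{E}_q(y_k^{-1})^{-1}\times A_{Q',W'},
\]
and the conclusion that these both lie in $\hat{T}_{Q,Q'}$ and coincide follows by applying $\mathbb{W}$ to the equality of Theorem \ref{thm_WC}.

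The only subtle point, and what I would treat as the main obstacle, is ensuring that the infinite sums are handled in a consistent completion: the map $\mathbb{W}$ is defined a priori term-by-term on $\hat{\mathcal{T}}^{\mathbb{Q}}_Q$, so before applying it to $\mca{A}_{Q,W}\times \mathbb{E}(s_k)^{-1}$ one must check that the factorizations
\[
\mca{A}_{Q,W} = \mca{A}_{Q,W,k}\times \mathbb{E}(s_k), \qquad \mca{A}_{Q',W'} = \mathbb{E}(s_k')\times \mca{A}_{Q,W,k}
\]
derived before Theorem \ref{thm_WC} descend coefficient-wise under $\mathbb{W}$. Since each coefficient of the product is a finite sum of products of coefficients of the factors (the grading is by $N$ and only finitely many decompositions of a given $\mathbf{v}\in N$ contribute), the ring homomorphism property of $\mathbb{W}$ applies separately to each homogeneous coefficient, which resolves the completion issue and completes the proof.
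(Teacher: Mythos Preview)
Your proposal is correct and follows exactly the paper's approach: the paper's proof is the single sentence ``Taking the weight polynomial $\mathbb{W}$ of the equation in Theorem \ref{thm_WC}, we get the following formula\ldots'', and you have simply unpacked what that entails (compatibility of $\mathbb{W}$ with the torus relations, identification of the images of $\mca{A}_{Q,W}$ and $\mathbb{E}(s_k)$, and the finiteness-per-coefficient argument justifying term-wise application). No further ideas are needed.
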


\subsection{Appendix : reminders on \cite{quiver-with-potentials}}\label{subsec_appendix}
For a QP $(Q,W)$ and a vertex $k$, we associate a new QP $\widetilde{\mu}_k(Q,W)=(\widetilde{Q},\widetilde{W})$ as follows. 
We put $\widetilde{Q}_0=Q_0$ and $\widetilde{Q}_1$ is the union of 
\begin{itemize}
\item all the arrows $c\in Q_1$ not incident to $k$, 
\item a ``composite'' arrow $[ba]$ from $t(a)$ to $h(b)$ for each $a$ and $b$ with $h(a)=t(b)=k$, and
\item an opposite arrow $a^*$ (resp. $b^*$) for each incoming arrow $a$ (resp. outgoing arrow $b$) at $k$.
\end{itemize}
The new potential is given by 
\[
\widetilde{W}:=[W]+\Delta
\]
where 
\[
\Delta:=\sum_{a,b\in Q_1; h(a)=t(b)=k}[ba]a^*b^*
\]
and $[W]$ is obtained by substituting $[ba]$ for each factor $ba$ occurring in the expansion of $W$.
What we have been assuming is that there is an automorphism $\psi$ of $\mathbb{C}\widetilde{Q}$ such that we have a decomposition
\begin{equation}\label{eq_decomp}
(\psi(\widetilde{Q}),\psi(\widetilde{W}))
\simeq 
(\psi(\widetilde{Q})_{\mr{red}},\psi(\widetilde{W})_{\mr{red}})
\oplus
(\psi(\widetilde{Q})_{\mr{triv}},\psi(\widetilde{W})_{\mr{triv}})
\end{equation}
with $\psi(\widetilde{Q})_{\mr{red}}=\mu_k(Q)$. 
We put $\mu_k(W):=\psi(\widetilde{W})_{\mr{red}}$.
The reader may refer \cite[\S 5]{quiver-with-potentials} for the details\footnote{In \cite{quiver-with-potentials}, it is shown that for a generic $W$ we always have such an automorphism of the completion of $\mathbb{C}\widetilde{Q}$. Here we assume that have an automorphism of $\mathbb{C}\widetilde{Q}$, otherwise the mutation of the potential can be infinite. }. 

We fix the automorphism $\psi$ and the decomposition \eqref{eq_decomp}.
Then a $J_{\widetilde{Q},\widetilde{W}}$-module is canonically identified with a $J_{\mu_k(Q,W)}$-module.

Take $V\in \mr{M}(Q;\mathbf{v})_k$. 
We define $\widetilde{V}:=\oplus \widetilde{V}_i$ by $\widetilde{V}_i:=V_i$ for $i\neq k$ and
\[
\widetilde{V}_k
:=
\mr{coker}
\biggl(\ 
\sum_{t(b)=k}b
\colon 
V_{k}\to \bigoplus_{t(b)=k}V_{h(b)}
\,\biggr).
\]
Note that the sum of the maps above is injective.
We define the action of $\mathbb{C}\widetilde{Q}$ on $\widetilde{V}$ as follows:
\begin{itemize}
\item for an arrow $c\in Q_1$ not incident to $k$, we associate
\[
\widetilde{V}_{t(c)}={V}_{t(c)} \overset{c\,\,}{\longrightarrow} {V}_{h(c)}=\widetilde{V}_{h(c)},
\]
\item for a ``composite'' arrow $[ba]$, we associate the composition
\[
\widetilde{V}_{t([ba])}={V}_{t(a)} \overset{b\circ a\,\,}{\longrightarrow} {V}_{h(b)}=\widetilde{V}_{h([ba])}, 
\]
\item for an opposite arrow $a^*$ of an incoming arrow $a$ at $k$, we associate the map induced by
\[
\sum_{t(b)=k} \partial_{[ba]}W\colon 
\bigoplus_{t(b)=k}V_{h(b)}
\longrightarrow
V_{t(a)}=\widetilde{V}_{h(a^*)}, 
\]
\item for an opposite arrow $b^*$ of an outgoing arrow $b$ at $k$, we associate
\[
\widetilde{V}_{t(b^*)}={V}_{h(b)}
\hookrightarrow 
\bigoplus_{t(b')=k}V_{h(b')}
\twoheadrightarrow 
\widetilde{V}_k
=
\widetilde{V}_{h(b^*)}.
\]
\end{itemize}
Then $\widetilde{V}$ belongs to $\mr{M}(\widetilde{Q},\mathbf{v}')^k$ where $\mathbf{v}'=\phi_k(\mathbf{v})$. 
We can verify that if $V\in \mr{M}({Q},W, \mathbf{v})_k$ then $\widetilde{V}\in \mr{M}(\widetilde{Q},\widetilde{W}, \mathbf{v}')^k$.

Take $U\in \mr{M}(\widetilde{Q};\mathbf{u})^k$. 
We define $\widehat{U}:=\oplus \widehat{U}_i$ by $\widehat{U}_i:=U_i$ for $i\neq k$ and
\[
\widehat{U}_k
:=
\mr{ker}
\biggl(\ 
\sum_{h(b^*)=k}b^*
\colon 
\bigoplus_{h(b^*)=k}U_{t(b^*)}
\to 
U_k
\,\biggr).
\]
Note that the sum of the maps above is surjective.
We define the action of $\mathbb{C}\widetilde{\widetilde{Q}}$ on $\widehat{U}$ in a similar way.
If $U\in \mr{M}(\widetilde{Q},\widetilde{W}, {\mathbf{u}})^k$ then $\widetilde{U}\in \mr{M}(\widetilde{\widetilde{Q}},\widetilde{\widetilde{W}}, \phi_k^{-1}(\mathbf{u}))_k$.

In the proof of \cite[Theorem 5.7]{quiver-with-potentials}, an explicit equivalence between $(Q,W)$ and $(\widetilde{\widetilde{Q}},\widetilde{\widetilde{W}})$ is given.  
This induces an identification of elements in $\mr{M}(\widetilde{\widetilde{Q}},\widetilde{\widetilde{W}}, \mathbf{u})_k$ with ones in $\mr{M}({Q},{W}, \mathbf{u})_k$

We can verify the composition of these there functors is identity. 
Combined with the identification of $J_{\widetilde{Q},\widetilde{W}}$-modules and $J_{\mu_k(Q,W)}$-modules we get the following equivalence:
\begin{prop}
\begin{equation}\label{eq_tilting}
\Phi\colon \mr{mod}(J(Q,W))_k
\overset{\sim}{\longrightarrow}
\mr{mod}(J(Q',W'))^k.
\end{equation}
\end{prop}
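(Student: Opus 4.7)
The plan is to assemble the equivalence from the three explicit constructions laid out just before the proposition: the tilde functor $V \mapsto \widetilde{V}$, the hat functor $U \mapsto \widehat{U}$, and the DWZ right-equivalence identifying $(\widetilde{\widetilde{Q}},\widetilde{\widetilde{W}})$ with $(Q,W)$. The identification of $J(\widetilde{Q},\widetilde{W})$-modules with $J(Q',W')$-modules that was fixed earlier then converts everything into an equivalence on the desired target.

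First I would verify that $V \mapsto \widetilde{V}$ genuinely takes $\mathrm{mod}(J(Q,W))_k$ into $\mathrm{mod}(J(\widetilde{Q},\widetilde{W}))^k$. Well-definedness of $\widetilde{V}_k$ as the cokernel of $\sum_{t(b)=k} b$ uses exactly the injectivity of this map, which is equivalent to $\mathrm{Hom}(s_k,V)=0$. The $J(\widetilde{Q},\widetilde{W})$-module structure comes for free from a routine check that the relations arising from $\widetilde{W}=[W]+\Delta$ hold: the relations $\partial_{a^*}\widetilde{W}=0$ and $\partial_{b^*}\widetilde{W}=0$ encode the cokernel sequence together with the prescribed formula for $a^*$ in terms of $\sum_{t(b)=k}\partial_{[ba]}W$; the relations $\partial_c\widetilde{W}=0$ for $c$ not incident to $k$ reduce to $\partial_c W = 0$ after the substitution $ba\mapsto[ba]$; and $\partial_{[ba]}\widetilde{W}=a^*b^*+\partial_{[ba]}[W]$ is automatic from the definition of $a^*$. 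Surjectivity of $\sum_{h(b^*)=k} b^*$ onto the cokernel is immediate, so $\mathrm{Hom}(\widetilde{V},s'_k)=0$. A symmetric check handles the hat functor.

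Next I would show the composition $\widehat{\widetilde{V}}$ is canonically isomorphic to $V$ under the DWZ equivalence, the only non-obvious point being vertex $k$. By construction $\widehat{\widetilde{V}}_k$ is the kernel of $\sum b^*\colon \bigoplus_{h(b)=k} V_{h(b)} \twoheadrightarrow \mathrm{coker}(\sum_{t(b)=k} b)$, and the snake lemma identifies this kernel canonically with $V_k$ together with the original incoming arrows at $k$. Unwinding the formulas then shows that the double-starred arrows $a^{**}$ and $b^{**}$ in $(\widetilde{\widetilde{Q}},\widetilde{\widetilde{W}})$ act on $\widehat{\widetilde{V}}$ as the original arrows at $k$ do on $V$, matching the DWZ right-equivalence on the reduced part. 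The reverse composition $\widetilde{\widehat{U}}\simeq U$ is handled symmetrically.

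The main obstacle will be the bookkeeping required to verify that the Jacobi relations transform correctly in both directions and that the identifications at vertex $k$ are compatible with the explicit isomorphism of $(\widetilde{\widetilde{Q}},\widetilde{\widetilde{W}})$ with $(Q,W)$ coming from \cite[Theorem 5.7]{quiver-with-potentials}. Since $W$ is finite by standing hypothesis, we are working with honest algebras rather than completions, so no convergence issues intrude; and because the trivial summand in the reduction of a QP acts trivially on finite-dimensional modules, the canonical identification of $J(\widetilde{Q},\widetilde{W})$-modules with $J(\mu_k(Q,W))$-modules is tautological and preserves the orthogonality condition defining the right-hand side. Combining these pieces yields the stated equivalence $\Phi$.
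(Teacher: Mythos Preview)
Your proposal is correct and follows exactly the approach the paper takes: the paper sets up the tilde and hat constructions, invokes the DWZ right-equivalence from \cite[Theorem 5.7]{quiver-with-potentials} to identify $(\widetilde{\widetilde{Q}},\widetilde{\widetilde{W}})$ with $(Q,W)$, asserts that the composite is the identity, and then passes to $(Q',W')$ via the fixed reduction. In fact the paper gives no proof beyond ``We can verify the composition of these three functors is identity,'' so your write-up supplies the bookkeeping the paper leaves implicit.
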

\begin{rem}
The derived equivalence given by \textup{\cite{dong-keller}} induces the equivalence of the two categories above.
Here we use \cite{quiver-with-potentials}'s construction since we need the explicit description of $\Phi$ to prove the following proposition.
\end{rem}
\begin{prop}\label{prop_tilting}
The equivalence \eqref{eq_tilting} induces 
\begin{equation*}
\mr{mod}(J(Q,W)_C)_k
\simeq
\mr{mod}(J(Q',W')_{C'})^k.
\end{equation*}
\end{prop}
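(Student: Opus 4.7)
The plan is to use the explicit description of the equivalence $\Phi$ from the appendix together with a careful tracking of which arrows lie in $C'$. First I would determine the new cut $C'=\mu_k C$ at a strict source. Since every incoming arrow $a$ at $k$ has $g_C(a)=1$ and every outgoing arrow $b$ at $k$ has $g_C(b)=0$, the rule for the graded mutation gives
\[
g_{C'}([ba])=g_C(a)+g_C(b)=1,\qquad g_{C'}(a^*)=-1+1=0,\qquad g_{C'}(b^*)=0.
\]
Thus every composite arrow $[ba]$ lies in $C'$, none of the opposite arrows $a^*,b^*$ do, and the arrows of $Q$ not incident to $k$ keep their old degree. So $C'$ consists of the arrows of $C$ not incident to $k$, together with all composite arrows $[ba]$ through $k$ (after the reduction step, the 2-cycles being cancelled occur in degree $1$, so this description is unchanged).

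Next I would verify the implication $(\Rightarrow)$. Take $V\in \mathrm{mod}(J(Q,W)_C)_k$ and set $\widetilde{V}=\Phi(V)$; we already know $\widetilde{V}\in\mathrm{mod}(J(Q',W'))^k$. For an arrow $c\in C$ not incident to $k$ the action on $\widetilde{V}$ equals that on $V$, hence is zero. For a composite arrow $[ba]$ through $k$, the appendix tells us that its action on $\widetilde{V}$ is $b\circ a$; but since $k$ is a strict source, $a\in C$, so $a$ acts as zero on $V$ and therefore $[ba]=b\circ a=0$ on $\widetilde{V}$. This shows $\widetilde{V}\in\mathrm{mod}(J(Q',W')_{C'})^k$.

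For the converse $(\Leftarrow)$, suppose $\widetilde{V}\in\mathrm{mod}(J(Q',W')_{C'})^k$ and let $V=\Phi^{-1}(\widetilde{V})$. The arrows of $C$ not incident to $k$ act identically on $V$ and $\widetilde{V}$, so they act as zero on $V$. The remaining arrows in $C$ are the incoming arrows $a$ at $k$. The key input is the strict source condition: the map
\[
\sum_{t(b)=k}b\colon V_k\longrightarrow \bigoplus_{t(b)=k}V_{h(b)}
\]
is injective (this is the reformulation of $\mathrm{Hom}(s_k,V)=0$ used in the appendix). By hypothesis the composite $[ba]=b\circ a$ vanishes on $\widetilde{V}$ for every outgoing $b$ at $k$, so for fixed $a$ we have $b\circ a=0$ for all such $b$; injectivity of $\sum_b b$ then forces $a=0$ on $V$. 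Hence $V\in\mathrm{mod}(J(Q,W)_C)_k$.

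The step I expect to be most delicate is the first one: making sure the cut-mutation description actually survives the reduction step that eliminates 2-cycles (item (c) in the description of the underlying graded quiver of $\mu_k^L(Q,W,d_C)$). Since the paper assumes $W$ is generic, every 2-cycle cancelled in the reduction is homogeneous of degree $1$, so the cancellation takes place entirely within the cut, and the degree assignment on the remaining arrows is unambiguous; this is why the identification of $C'$ above is well-defined. Once this bookkeeping is pinned down, the two implications follow from the explicit formulas in the appendix.
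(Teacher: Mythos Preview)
Your proof is correct and follows essentially the same route as the paper: both directions hinge on the explicit formula $[ba]=b\circ a$ from the appendix, using $a\in C$ for $(\Rightarrow)$ and the injectivity of $\sum_{t(b)=k} b$ for $(\Leftarrow)$. Your write-up is in fact more complete than the paper's, since you spell out what $\mu_k C$ consists of, treat the arrows of $C$ not incident to $k$ explicitly, and address the reduction step; the paper's proof leaves these points implicit.
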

\begin{proof}
First, take $V\in \mr{mod}(J(Q,W)_C)_k$.
For each composite arrow $[ba]$, the map $[ba]$ vanishes on $\Phi(V)$ since the map $a$ vanishes on $V$.
Hence we have $\Phi(V)\in \mr{mod}(J(Q',W')_{C'})^k$.

Next, assume that $\Phi(V)\in \mr{mod}(J(Q',W')_{C'})^k$ for $V\in \mr{mod}(J(Q,W))_k$.
For any $a$ with $h(a)=k$, $a$ vanishes on $V$ since 
$\sum_{t(b)=k}[ba]$ vanished and $\sum_{t(b)=k}b$ is injective. 
Hence we have $V\in \mr{mod}(J(Q,W)_{C})_k$.
\end{proof}
\begin{rem}
This gives a generalization of a part of the results of \cite{bridgeland-stern}.
\end{rem}

\begin{NB}
Let $\Gamma_{Q,W}$ be the Ginzburg dg algebra and 
$\mathcal{D}_{Q,W}$ be the derived category of dg modules with finite dimensional cohomologies over $\Gamma_{Q,W}$.
Let 
\[
\Phi=\Phi_k \colon \mathcal{D}_{Q,W} \overset{\sim}{\longrightarrow} \mathcal{D}_{Q',W'}
\]
be the derived equivalence (\cite{dong-keller,keller-completion}) which induces the isomorphism $\phi=\phi_k$ of the Grothendieck group given by
\[
\phi([s_i])=
\begin{cases}
[s'_i]+Q(i,k)[s'_k] & i\neq k.\\
-[s'_k] & i=k.
\end{cases}
\]

Assume that $(Q,W)$ is Calabi-Yau, that is, $\Gamma_{Q,W}$ is concentrated on degree $0$. 
In such a case 
\begin{itemize}
\item[(1)] $\mathcal{D}_{Q,W}\simeq D^b(\mr{mod}J(Q,W))$, 
\item[(2)] $\mu_k(Q,W)$ is also Calabi-Yau (\cite{keller-completion}).
\end{itemize} 
Let $\Phi$ be the derived equivalence between $D^b(\mr{mod}J(Q,W))$ and $D^b(\mr{mod}J(\mu_kQ,\mu_kW))$. For any $V\in D^b(\mr{mod}J(Q,W))$, we have 
\[
H^j(\Phi(V))_i=0\ (j\neq 0),\quad H^0(\Phi(V))_i=V_i
\]
for $i\neq k$ and the map 
\[
[ba] \colon H^0(\Phi(V))_{s(a)} \to H^0(\Phi(V))_{t(b)}
\]
is given by the composition $b\circ a$.
We define the following full subcategories 
\begin{align*}
\mr{mod}(J(Q,W))_k
&:=
\{V\in \mr{mod}(J(Q,W))\mid \Hom(s_k,V)=0\},\\
\mr{mod}(J(Q,W)_C)_k
&:=
\mr{mod}(J(Q,W))_k
\cap 
\mr{mod}(J(Q,W)_C),\\
\mr{mod}(J(Q',W'))^k
&:=
\{V\in \mr{mod}(J(Q',W'))\mid \Hom(V,s'_k)=0\},\\
\mr{mod}(J(Q',W')_{C'})^k
&:=
\mr{mod}(J(Q',W'))^k
\cap 
\mr{mod}(J(Q',W')_{C'}).
\end{align*}
Note that for $V\in \mr{mod}(J(Q,W))$ the condition $\Hom(s_k,V)=0$ is equivalent to the injectiveness of
\[
\sum_{s(b)=k}V_k\to \biggl(\,\bigoplus_{s(b)=k} V_{t(b)}\biggr)
\]
and for $V\in \mr{mod}(J(Q',W'))$ the condition $\Hom(V,s'_k)=0$ is equivalent to the surjectiveness of
\[
\sum_{t(a)=k}\biggl(\,\bigoplus_{t(a)=k} V_{s(a)}\biggr)\to V_k.
\]

It is shown in \cite{keller-completion}, the derived equivalence $\Phi$ induces 
\[
\mr{mod}(J(Q,W))_k
\simeq
\mr{mod}(J(Q',W'))^k.
\]
\begin{prop}\label{prop_tilting}
The derived equivalence induces 
\[
\mr{mod}(J(Q,W)_C)_k
\simeq
\mr{mod}(J(Q',W')_{C'})^k.
\]
\end{prop}
\begin{proof}
First, take $E\in \mr{mod}(J(Q,W)_C)_k$.
For a composition $ba$ through $k$, $[bc]$ vanishes on $\Phi(E)$ since $a$ vanishes on $E$.
Hence we have $\Phi(E)\in \mr{mod}(J(Q',W')_{C'})^k$.

Next, assume that $\Phi(E)\in \mr{mod}(J(Q',W')_{C'})^k$ for $E\in \mr{mod}(J(Q,W))_k$.
For any $a$ with $t(a)=k$, $a$ vanishes on $E$ since 
$\sum_{s(b)=k}[ba]$ vanished and $\sum_{s(b)=k}b$ is injective. 
Hence we have $E\in \mr{mod}(J(Q,W)_{C})_k$.
\end{proof}
\begin{rem}
This gives a generalization of a part of the results of \cite{bridgeland-stern}.
\end{rem}
\end{NB}

\bibliographystyle{amsalpha}
\bibliography{bib-ver6.bib}

{\tt
\noindent Kentaro Nagao

\noindent Graduate School of Mathematics, Nagoya University

\noindent kentaron@math.nagoya-u.ac.jp
}
\end{document}